\definecolor{darkgreen}{rgb}{0,0.6,0}
\definecolor{lessdarkgreen}{rgb}{0.4,0.85,0.1}
\newtheorem{defi}{Definition}[section]
\newtheorem{lem}[defi]{Lemma}
\newtheorem{thm}[defi]{Theorem}
\newtheorem{coro}[defi]{Corollary}
\newtheorem{ass}[defi]{Assumption}
\newtheorem{rem}[defi]{Remark}
\DeclareMathOperator*{\argmin}{arg\,min}
\DeclareMathOperator*{\minimize}{minimize}
\def\eps{\varepsilon}
\newcommand{\R}{\mathbb{R}}
\newcommand{\N}{\mathbb{N}}
\newcommand{\K}{\mathcal{K}}
\newcommand{\U}{\mathcal{U}}
\newcommand{\X}{\mathcal{X}}
\newcommand{\Y}{\mathcal{Y}}
\newcommand{\UU}{\mathbb{U}}
\newcommand{\W}{\mathcal{W}}
\newcommand{\B}[1]{\mathcal{B}(#1)}
\newcommand{\RR}[1]{\mathcal{R}(#1)}
\newcommand{\F}{\mathcal{F}}
\newcommand{\Prob}{\mathbb{P}}
\newcommand{\E}{\mathbb{E}}
\newcommand{\Exp}[1]{\mathbb{E}\left[#1\right]}
\newcommand{\tol}{k}
\newcommand{\tcl}{j}
\newcommand{\Bcl}{\bar{B}}
\providecommand{\keywords}[1]
{
  \small	
  \textbf{\textit{Keywords---}} #1
}
\providecommand{\MSCcodes}[1]
{
  \small	
  \textbf{\textit{MSC 2020 codes---}} #1
}
\title{Stability and performance of stochastic economic MPC \\
-- Stochastic characterization of the closed-loop asymptotics
        \thanks{This work was funded by the Deutsche Forschungsgemeinschaft (DFG, German Research Foundation) -- project number 499435839.}
        }
\author{Jonas Schießl \thanks{Mathematical Institute, University of Bayreuth, Bayreuth, Germany (e-mail: jonas.schiessl@uni-bayreuth.de).}
    \and Hannah Selder \thanks{Center for Scalable Data Analytics and Artificial Intelligence (ScaDS.AI) Dresden/Leipzig, Leipzig University, Leipzig, Germany (e-mail: hannah.selder@uni-leipzig.de).}
    \and Ruchuan Ou \thanks{Institute of Control Systems, Hamburg University of Technology, Hamburg, Germany (e-mail: ruchuan.ou@tuhh.de).}
    \and Michael H. Baumann \thanks{Mathematical Institute, University of Bayreuth, Bayreuth, Germany (e-mail: michael.baumann@uni-bayreuth.de).}
    \and Timm Faulwasser \thanks{Institute of Control Systems, Hamburg University of Technology, Hamburg, Germany (e-mail: timm.faulwasser@ieee.org).}
    \and Lars Grüne \thanks{Mathematical Institute, University of Bayreuth, Bayreuth, Germany (e-mail: lars.gruene@uni-bayreuth.de).}
    }
\begin{document}

\maketitle

\doublespacing

\begin{abstract}
    Model Predictive Control (MPC) is well understood in the deterministic setting, yet rigorous stability and performance guarantees for stochastic MPC remain limited to the consideration of terminal constraints and penalties. In contrast, this work analyzes stochastic economic MPC with an expected cost criterion and establishes closed-loop guarantees without terminal conditions. Relying on stochastic dissipativity and turnpike properties, we construct closed-loop Lyapunov functions that ensure $P$-practical asymptotic stability of a particular optimal stationary process under different notions of stochastic convergence, such as in distribution or in the $p$-th mean. In addition, we derive tight near-optimal bounds for both averaged and non-averaged performance, thereby extending classical deterministic results to the stochastic domain. Finally, we show that the abstract stochastic MPC scheme requiring distributional knowledge shares the same closed-loop properties as a practically implementable algorithm based only on sampled state information, ensuring applicability of our findings. Our findings are illustrated by a numerical example.
\end{abstract}

\keywords{Stochastic Predictive Control, Dissipativity, Stability, Stochastic Systems}

\MSCcodes{93B45, 93D15, 93E15, 93E20}

\section{Introduction} \label{sec:introduction}

Model Predictive Control (MPC) is a well-established methodology for the control of dynamical systems, with solid theoretical foundations in the deterministic setting regarding stability, feasibility, and performance. Motivated by the inherent uncertainties in real-world systems, recent research has aimed to extend these concepts to stochastic MPC \cite{Mesbah2016}.

However, while the stability and performance of deterministic MPC for general (often also called ``economic'') cost criteria are by now well understood \cite{DiAR10, Angeli2012, Gruene2013, Faulwasser2018, Gruene2019, CoGW20}, rigorous guarantees in the stochastic domain are still rare. 

One line of existing works tackles stochastic MPC resorting to the analysis tools originally developed for robust approaches, see, e.g., \cite{Lorenzen16a}. Put differently, the randomness of state evolution and cost accumulation makes stability and performance analysis substantially more challenging and most existing results are limited.
For instance, most of the existing stability results require terminal ingredients, e.g., in \cite{Lucia20a}, where input-to-state practical stability of a multi-stage MPC Scheme with terminal conditions is shown, in \cite{McAR23}, where a robust asymptotic stability property in expectation is shown for a general class of problems, in \cite{Kouvaritakis16a}, which provides different stability results for linear systems with terminal constraints, and in \cite{Chatterjee15a}, which shows stability properties based on drift conditions. In a recent line of research, probabilistic reachable sets are used for constraint tightening, see, e.g.,~\cite{Hewing18a,Kohler25}, which allows to handle chance constraints.

However, one of the most striking limitations is that---to the best of our knowledge---existing stability results only provide asymptotic bounds for certain averaged quantities (e.g., in the form of input-to-state stability estimates), but they do not give a precise description of the limit behavior of the MPC closed loop in terms of the corresponding stochastic process or its distribution.
In terms of performance analysis, to the best of the authors' knowledge the only results available use non-tight bounds on the expected average performance as, e.g., in \cite{Chatterjee15a} and a dynamic regret analysis for linear-quadratic problems with additive and multiplicative uncertainties under a finite support assumption \cite{shin2022}.

In this paper, we contribute to this line of research by analyzing the closed-loop behavior of discrete-time stochastic MPC with a general expected cost criterion, defining a setting that is often called ``economic MPC''.
Our theoretical findings are based on dissipativity and turnpike concepts for stochastic optimal control problems developed in \cite{Schiessl2023a,Schiessl2024a,Schiessl2025}.
Building on these structural insights, we construct Lyapunov functions that allow us to establish practical asymptotic stability of the MPC closed-loop system without requiring terminal ingredients. In contrast to existing results, we provide an exact description of the asymptotic behavior of the MPC closed-loop solution in terms of a particular stationary stochastic process. This enables us to provide stability statements on different levels, such as mean square, pathwise in probability, in distribution, or in moments, which are in line with common stochastic stability and convergence concepts, such as convergence in the $p$-th mean or convergence in distribution.
Moreover, we demonstrate that the closed-loop trajectory enjoys averaged and non-averaged near-optimal performance bounds, thereby extending classical results from deterministic economic MPC to the stochastic domain.
Preliminary results for the performance analysis under more restrictive assumptions are contained in the authors’ recent conference paper \cite{Schiessl2024b}.

Due to the conceptual approach we take in this paper, in a first step the results apply to an abstract stochastic MPC algorithm that requires the knowledge of the distribution of the closed-loop solutions, which is usually not available in practice. Yet, a result that we consider interesting on its own shows that this abstract algorithm has the same statistical closed-loop properties as an MPC algorithm that only uses state information sampled along a single closed-loop path and is thus practically implementable. 

The remainder of the paper is organized as follows.
Section~\ref{sec:setting} introduces the problem set-up and defines the abstract stochastic MPC algorithm that forms the basis of our theoretical considerations.
In Section~\ref{sec:stochDPP}, we present stochastic dynamic programming principles and show that the abstract stochastic MPC algorithm analyzed here possesses the same closed-loop properties as a practically implementable scheme based solely on sampled state information, thereby ensuring the relevance of our results for applications.
Section~\ref{sec:DissiTurnpike} introduces the notions of stochastic dissipativity and turnpike properties used for our theoretical developments and establishes the connection between them.
In Section~\ref{sec:Stability} and Section~\ref{sec:Performance}, we present the main results of the paper. In particular in Section~\ref{sec:Stability}, we demonstrate that stochastic dissipativity enables the construction of Lyapunov functions associated with different notions of stochastic convergence, thereby guaranteeing the stochastic stability of the closed-loop trajectories. Furthermore, based on these stability results in Section~\ref{sec:Stability} we derive tight near-optimality bounds for the MPC closed loop with respect to both averaged and non-averaged performance.
Our results are illustrated by a numerical example in Section~\ref{sec:numerics} and Section~\ref{sec:conclusion} concludes the paper.

\section{Setting} \label{sec:setting}

\subsection{Problem formulation}

    For separable Banach spaces $\X$, $\U$, and $\W$ and a continuous function
    \begin{equation*}
        f : \X\times \U \times \W \rightarrow \X, \quad (x,u,w) \mapsto f(x,u,w)
    \end{equation*}
    we consider the discrete-time stochastic system
    \begin{equation} 
    \label{eq:stochSys}
        X(\tol+1) = f(X(\tol),U(\tol),W(\tol)), \quad X(0) = X_0.
    \end{equation}
    Here, the initial condition $X_0 \in \RR{\Omega,\X}$, the states $X(\tol) \in \RR{\Omega,\X}$, the controls $U(\tol) \in \RR{\Omega,\U}$ and the noise $W(\tol) \in \RR{\Omega,\W}$ are from the spaces
    \[\RR{\Omega,\Y} := \{X: (\Omega,\mathcal{F},\Prob) \rightarrow (\Y,\mathcal{B}(\Y)) \mbox{ measurable} \}, \qquad \Y=\X,\,\U,\,\mbox{or }\W\]
    of random variables on the probability space $(\Omega, \mathcal{F}, \Prob)$ for all $\tol \in \N_0$,
    and $\B{\Y}$ is the Borel $\sigma$-algebra on $\Y$.
    We assume that the sequence $\{W(\tol)\}_{\tol \in \N_0}$ is \emph{i.i.d.}, $W(\tol)$ is independent of $X(\tol)$ and $U(\tol)$ for all $\tol \in \N_0$, and that $W(\tol) \sim P_W$, i.e., $W(\tol)$ has the distribution $P_W$.
    
    Furthermore, we consider control process $\mathbf{U} := (U(0),U(1),\ldots)$ that are measurable with respect to the natural filtration $(\F_\tol)_{\tol \in \N_0}$, i.e.\
    \begin{equation} \label{eq:Filtration}
        \sigma(U(\tol)) \subseteq \F_\tol := \sigma((X(0),\ldots,X(\tol)), \quad \tol \in \N_0,
    \end{equation}
    for all $\tol \in \N_0$. 
    This condition can be interpreted as a causality requirement, formalizing that when selecting $U(k)$, only past and present information is used, without access to future events. 
    Moreover, this choice of the filtration restricts the control design to information that is observable through the states. 
    Consequently, the same control input must be applied whenever two different noise realizations lead to the same state.
    Notice that when a filtration is generated directly by the noise process, the inputs applied at the resulting does not necessarily need to coincide.
    For further details on stochastic filtrations, we refer to \cite{Fristedt1997, Protter2005}.
    In addition to the filtration condition we impose pathwise constraints on the control process, i.e., for a set $\UU \subseteq \U$ the condition  $U(\omega, \tol) \in \UU$ should be satisfied for all $\omega \in \Omega$ and $\tol \in \N_0$.

    To extend our dynamics \eqref{eq:stochSys} to an optimal control problem, we consider a lower semicontinuous function $g: \X \times \U \rightarrow \R$ bounded from below and define the stage cost $\ell(X,U) := \Exp{g(X,U)}$, where $\Exp{g(X,U)}$ denotes the expectation of the function $g$ with respect to the joint distribution of the state-control-pair $(X,U)$.
    
    Then the stochastic optimal control problem on time horizon $N \in \N \cup \{\infty\}$ under consideration reads
    \begin{equation} 
    \label{eq:stochOCP}
        \begin{split}
            \minimize_{\mathbf{U}} &~J_N(X_0,\mathbf{U}) := \sum_{\tol=0}^{N-1} \ell(X(\tol),U(\tol)) \\
            s.t. ~ X(\tol+1) &= f(X(\tol),U(\tol),W(\tol)), ~ X(0) = X_0, \\
            \sigma(U(\tol)) &\subseteq \F_\tol, \quad U(\omega,\tol) \in \UU,~\text{for all}~\omega \in \Omega.
        \end{split}
    \end{equation}

    Our goal is now to approximate the solution of the stochastic optimal control problem \eqref{eq:stochOCP} on infinite horizon $N=\infty$ while satisfying near-optimality and stability properties. 

\subsection{The abstract stochastic MPC algorithm}

    To calculate an approximation of the solution of \eqref{eq:stochOCP} on horizon $N = \infty$ we will use a stochastic MPC scheme.
    The idea of MPC is to approximate the solution to problem~\eqref{eq:stochOCP} on infinite horizon by iteratively solving problems on finite horizons $N \in \N$ at each time $\tcl \in \N_0$.
    While in time-invariant deterministic settings it is sufficient to change the initial condition of the optimal control problem at each time $\tcl \in \N_0$, this is not sufficient for our setting since we have an implicit time dependence of the problem caused by the noise and since the filtration condition \eqref{eq:Filtration} depends on the whole history of states up to time $\tcl$.
    Yet, it is a well known fact in stochastic optimal control, cf.\ \cite{Altman2021,Bertsekas1996b}, that the optimal solution $\mathbf{U}^*_N$ of problem~\eqref{eq:stochOCP} satisfies
    $$\sigma(U^*_N(\tol)) \subseteq \sigma(X^*(\tol)) \subseteq \F_\tol,$$
    and thus, we could replace the condition $\sigma(U(\tol)) \subseteq \F_\tol$ in \eqref{eq:stochOCP} by  $\sigma(U(\tol)) \subseteq \sigma(X(\tol))$ and still obtain the same optimal solution.
    However, while this eliminates the dependence of the history of states of the problem, it is still time-varying due to the influence of the sequence of disturbances $\mathbf{W}$, at least if we are working on the space of random variables.
    Thus, we interpret the system~\eqref{eq:stochSys} as a time-varying system on the infinite dimensional space of random variables $\RR{\Omega,\X}$. For a given initial state $X_\tcl \in \RR{\Omega,\X}$ at time $\tcl \in \N_0$ and control sequence $\mathbf{U}$ we denote by $X_U(\cdot;\tcl,X_\tcl)$ the solution of the system
    \begin{equation} 
    \label{eq:stochSysTimeVarying}
        X_U(\tol+1;\tcl,X_\tcl) = f(X_U(\tol;\tcl,X_\tcl),U(\tol),W(\tcl+\tol)), \quad X(0;\tcl,X_\tcl) = X_\tcl,
    \end{equation}
    using the abbreviations $X(\cdot;\tcl,X_\tcl)$ if the control is unambiguous and $X(\cdot)$ if the initial time and state is also unambiguous.
    The open-loop problem solved at each time instant $j$ in our stochastic MPC algorithm, associated to the stochastic optimal control problem~\eqref{eq:stochOCP} is then given by
    \begin{equation} 
    \label{eq:stochOCPopenloop}
        \begin{split}
            \minimize_{\mathbf{U}} &~J_N(X_\tcl,\mathbf{U}) := \sum_{\tol=0}^{N-1} \ell(X(\tol;\tcl,X_\tcl),U(\tol)) \\
            s.t. ~ X(\tol+1;\tcl,X_\tcl) &= f(X(\tol;\tcl,X_\tcl),U(\tol),W(\tcl+\tol)), ~ X(0;\tcl,X_\tcl) = X_\tcl \\
            \sigma(U(\tol)) &\subseteq \sigma(X(\tol;\tcl,X_\tcl)), \quad U(\omega,\tol) \in \UU,~\text{for all}~\omega \in \Omega.
        \end{split}
    \end{equation}
    
    If a minimizer of this problem exists, we will denote it by $\mathbf{U}^*_N$ or $\mathbf{U}^*_{N,\tcl,X_\tcl}$ if we want to emphasize the dependence on the initial condition. 
    Furthermore, we define the set of admissible control sequences to the initial time-state pair $(\tcl,X_\tcl)$ on horizon $N \in \N \cup \{\infty\}$ as 
    $$\UU_{ad}^N(\tcl,X_\tcl) := \{ \mathbf{U} \in \RR{\Omega,\U} \mid U(\tol) \in \UU_{ad}(X_{\mathbf{U}}(\tol;\tcl,X_\tcl)) \mbox{ for all } 0 \leq \tol \leq N-1 \}$$
    with $\UU_{ad}(X) := \{U \in \RR{\Omega,\X} \mid \sigma(U) \subseteq \sigma(X) ,~ U(\omega) \in \UU ,~ \omega \in \Omega \}$ and by 
    \[ V_N(\tcl,X_\tcl) := \inf_{\mathbf{U} \in \UU_{ad}^{N}(\tcl,X_\tcl)} J_N(X,\mathbf{U}) \] 
    we denote the optimal value function corresponding to problem~\eqref{eq:stochOCPopenloop}.
    
    The abstract version of the stochastic MPC algorithm on the space of random variables is summarized in Algorithm~\ref{alg:abstractStochMPC}.
    Furthermore, we recall that every optimal control sequence obtained in Step~2 of the algorithm is generated by a deterministic sequence of state-dependent feedback policies.
    
    \begin{algorithm}
        \caption{Abstract stochastic MPC algorithm}
        \label{alg:abstractStochMPC}
        \begin{algorithmic}
            \For{$\tcl=0,1,\ldots$}
                \State 1.) Set $X_\tcl = X(\tcl)$.
                \State 2.) Solve the stochastic optimal control problem \eqref{eq:stochOCPopenloop} and obtain the optimal control sequence
                $$\mathbf{U}^*_{N,\tcl,X_\tcl} = (U^*_{N,\tcl,X_\tcl}(0), \ldots, U^*_{N,\tcl,X_\tcl}(N-1)) = (\pi^*_0(X(0;\tcl,X_\tcl)), \ldots, \pi^*_{N-1}(X(N-1;\tcl,X_\tcl))).$$
                \State 3.) Apply the MPC feedback $\mu_N(X(\tcl)) := U^*_{N,\tcl,X_\tcl}(0)$ to system \eqref{eq:stochSys} 
                \State ~~~ and get the next state $X(\tcl+1)$.
            \EndFor
        \end{algorithmic}
    \end{algorithm}

    \begin{rem}[Link to MDPs]
        Note that due to the law-invariant structure of problem~\ref{eq:stochOCPopenloop}, it is also possible to reformulate the optimal control problem~\eqref{eq:stochOCPopenloop} as a Markov decision problem (MDP), defined on the space of probability measures and feedback laws.
        In this reformulation, the problem becomes time-invariant, since the random variables $W(0), W(1), \ldots$, which model the noise, are \emph{i.i.d.}
        While this implies that the feedback law $\mu_N$ is time-invariant, such a reformulation allows us to capture only the distributional behavior of the solutions, since information about the realization paths is lost.
        Hence, we will not employ this reformulation explicitly, as our analysis is carried out in the space of random variables to make statements about both the pathwise behavior and the behavior of the underlying probability measures.
    \end{rem}
    
    While the abstract MPC Algorithm~\ref{alg:abstractStochMPC} on the space of random variables is useful for our theoretical analysis, its practical relevance is limited. This is because in a real plant one can usually only measure the value of the current realization of a random variable and not its distribution. 
    However, in the following section we will show that under the assumed cost structure there exists a implementable MPC algorithm, which only needs information on single realization paths, yet the resulting closed-loop solutions coincide almost surely with those generated by Algorithm~\ref{alg:abstractStochMPC}.

\section{Stochastic dynamic programming and an implementable version of the abstract MPC algorithm}

An important tool to analyze optimal control problems is the dynamic programming principle (DPP). It states that minimizing the sum of the cost on a shorter horizon $M < N$ plus the optimal value on the remaining horizon yields the same optimal value as directly minimizing the cost on the whole horizon. 
    The following two theorems formalize this principle on finite and infinite horizon.

    \begin{thm}[{Finite horizon DPP, cf.\ \cite[Theorem~3.15]{Gruene2017b}}] \label{thm:FiniteDPP}
       Consider the optimal control problem \eqref{eq:stochOCPopenloop} with $\tcl,N \in \N_0$ and $X_\tcl \in \RR{\Omega,\X}$ such that $\vert V_N(\tcl,X_\tcl) \vert < \infty$.
       Let $\mathbf{U}^*_N \in \UU_{ad}^N(\tcl,X_\tcl)$ be an optimal control sequence on horizon $N$ and define $V_0 \equiv 0$. 
       Then for all $M=1,\ldots,N$ it holds that 
       \begin{equation*}
           V_N(\tcl,X_\tcl) = \sum_{\tol=0}^{M-1} \ell(X_{\mathbf{U}^*_N}(\tol;\tcl,X_\tcl), U^*_N(\tol)) + V_{N-M}(\tcl+M,X_{\mathbf{U}^*_N}(M;\tcl,X_\tcl)).
       \end{equation*}
    \end{thm}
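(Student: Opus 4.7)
The plan is to invoke the Bellman principle of optimality in its standard form: once the optimal control $\mathbf{U}^*_N$ has been executed for $M$ steps and reached the intermediate state $X_{\mathbf{U}^*_N}(M;\tcl,X_\tcl)$, the remaining segment $(U^*_N(M),\ldots,U^*_N(N-1))$ must itself be optimal for the $(N-M)$-step problem starting at time $\tcl+M$ from that state. Concretely, I would first split the cost along the optimal trajectory,
\begin{equation*}
    V_N(\tcl,X_\tcl) = J_N(X_\tcl, \mathbf{U}^*_N) = \sum_{\tol=0}^{M-1} \ell(X_{\mathbf{U}^*_N}(\tol;\tcl,X_\tcl), U^*_N(\tol)) + \sum_{\tol=M}^{N-1} \ell(X_{\mathbf{U}^*_N}(\tol;\tcl,X_\tcl), U^*_N(\tol)).
\end{equation*}
The first sum already matches the desired head. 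Using the time-shifted dynamics \eqref{eq:stochSysTimeVarying} and the i.i.d.\ property of $\{W(\tol)\}$, the second sum can be identified with $J_{N-M}(X_{\mathbf{U}^*_N}(M;\tcl,X_\tcl),\mathbf{U}^{\text{tail}})$, where $\mathbf{U}^{\text{tail}} := (U^*_N(M),\ldots,U^*_N(N-1))$ is reinterpreted as a control process starting at time $\tcl+M$.

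It therefore suffices to show that $J_{N-M}(X_{\mathbf{U}^*_N}(M;\tcl,X_\tcl),\mathbf{U}^{\text{tail}}) = V_{N-M}(\tcl+M, X_{\mathbf{U}^*_N}(M;\tcl,X_\tcl))$. I would proceed by contradiction: assuming a strictly cheaper $\tilde{\mathbf{U}} \in \UU_{ad}^{N-M}(\tcl+M, X_{\mathbf{U}^*_N}(M;\tcl,X_\tcl))$ existed, I would form the concatenation
\begin{equation*}
    \hat{\mathbf{U}} := (U^*_N(0),\ldots,U^*_N(M-1),\tilde{U}(0),\ldots,\tilde{U}(N-M-1)).
\end{equation*}
Since the head trajectory of $\hat{\mathbf{U}}$ from $(\tcl,X_\tcl)$ coincides with that of $\mathbf{U}^*_N$ and the tail trajectory coincides with that of $\tilde{\mathbf{U}}$ from $(\tcl+M, X_{\mathbf{U}^*_N}(M;\tcl,X_\tcl))$, additivity of the cost gives $J_N(X_\tcl,\hat{\mathbf{U}}) < J_N(X_\tcl,\mathbf{U}^*_N) = V_N(\tcl,X_\tcl)$, contradicting optimality of $\mathbf{U}^*_N$ provided that $\hat{\mathbf{U}} \in \UU_{ad}^N(\tcl,X_\tcl)$.

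The main obstacle is precisely this admissibility check, which is where the stochastic setting genuinely complicates the otherwise routine DPP argument. The pathwise constraint $\hat{U}(\omega,\tol) \in \UU$ is inherited directly from $\mathbf{U}^*_N$ and $\tilde{\mathbf{U}}$, but the measurability requirement $\sigma(\hat{U}(\tol)) \subseteq \sigma(X_{\hat{\mathbf{U}}}(\tol;\tcl,X_\tcl))$ requires care. For $\tol < M$ the closed-loop state under $\hat{\mathbf{U}}$ equals that under $\mathbf{U}^*_N$ by construction, so the condition is inherited. For $\tol \geq M$, I would establish inductively the identity $X_{\hat{\mathbf{U}}}(\tol;\tcl,X_\tcl) = X_{\tilde{\mathbf{U}}}(\tol-M;\tcl+M, X_{\mathbf{U}^*_N}(M;\tcl,X_\tcl))$ using the noise alignment $W(\tcl+\tol) = W((\tcl+M)+(\tol-M))$ and the i.i.d.\ property, which identifies the $\sigma$-algebras generated by the two trajectories and transfers the admissibility of $\tilde{\mathbf{U}}$ to $\hat{\mathbf{U}}$. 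Once this bookkeeping is done, the contradiction closes and the claimed identity follows.
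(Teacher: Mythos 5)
The paper does not prove this theorem itself but cites it from \cite[Theorem~3.15]{Gruene2017b}; the underlying argument there is exactly the cost-splitting plus cut-and-paste contradiction that you give. Your proposal is correct and handles the one point that genuinely needs checking when lifting the deterministic argument to the space of random variables, namely that the concatenated control $\hat{\mathbf{U}}$ satisfies the measurability constraint $\sigma(\hat{U}(\tol))\subseteq\sigma(X_{\hat{\mathbf{U}}}(\tol;\tcl,X_\tcl))$, which you settle via the pathwise identification of the shifted trajectories through the noise alignment $W(\tcl+\tol)=W((\tcl+M)+(\tol-M))$. The only cosmetic omission is that concluding $J_{N-M}(\cdot,\mathbf{U}^{\text{tail}})=V_{N-M}(\cdot)$ from the contradiction also uses $\mathbf{U}^{\text{tail}}\in\UU_{ad}^{N-M}(\tcl+M,X_{\mathbf{U}^*_N}(M;\tcl,X_\tcl))$ for the reverse inequality $J_{N-M}\geq V_{N-M}$, but this follows from the same trajectory identification you already carry out.
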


    \begin{thm}[{Infinite horizon DPP, cf.\cite[Theorem~4.4]{Gruene2017b}}] \label{thm:InfiniteDPP}
       Consider the optimal control problem \eqref{eq:stochOCP} with $\tcl \in \N_0$, $N=\infty$ and $X_\tcl \in \RR{\Omega,\X}$ such that $\vert V_{\infty}(\tcl,X_\tcl) \vert < \infty$. 
       Let $\mathbf{U}^*_{\infty} \in \UU_{ad}^{\infty}(\tcl,X_\tcl)$ be an optimal control sequence on infinite horizon. 
       Then for all $M \in \N$ it holds that
       \begin{equation*}
           V_{\infty}(\tcl,X_\tcl) = \sum_{\tol=0}^{M-1} \ell(X_{\mathbf{U}^*_{\infty}}(\tol;\tcl,X_\tcl), U^*_{\infty}(\tol)) + V_{\infty}(\tcl+M,X_{\mathbf{U}^*_{\infty}}(M;\tcl,X_\tcl)).
       \end{equation*}
    \end{thm}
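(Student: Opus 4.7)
The plan is to establish the identity by proving both inequalities separately, adapting the classical prefix-suffix decomposition argument to our stochastic setting on the space of random variables. Throughout, let $Y_M := X_{\mathbf{U}^*_{\infty}}(M;\tcl,X_\tcl)$.

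For the upper bound $V_\infty(\tcl,X_\tcl) \leq \sum_{\tol=0}^{M-1} \ell(X_{\mathbf{U}^*_{\infty}}(\tol;\tcl,X_\tcl), U^*_{\infty}(\tol)) + V_\infty(\tcl+M, Y_M)$, I would fix $\varepsilon > 0$ and pick an $\varepsilon$-optimal control $\tilde{\mathbf{U}} \in \UU_{ad}^{\infty}(\tcl+M, Y_M)$, then define the concatenation $\hat{U}(\tol) := U^*_{\infty}(\tol)$ for $\tol < M$ and $\hat{U}(\tol) := \tilde{U}(\tol - M)$ for $\tol \geq M$. The key step is to verify that $\hat{\mathbf{U}} \in \UU_{ad}^{\infty}(\tcl, X_\tcl)$; here the time-invariance of $f$ together with the i.i.d. property of $\{W(\tol)\}$ ensures that $X_{\hat{\mathbf{U}}}(M+\tol; \tcl, X_\tcl) = X_{\tilde{\mathbf{U}}}(\tol; \tcl+M, Y_M)$ almost surely. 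Computing $J_\infty(X_\tcl, \hat{\mathbf{U}})$, splitting at index $M$, and letting $\varepsilon \downarrow 0$ yields the bound.

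For the lower bound, I would start from $V_\infty(\tcl, X_\tcl) = J_\infty(X_\tcl, \mathbf{U}^*_{\infty})$ and split the sum at index $M$. Using time-invariance of $f$ and the i.i.d. structure of the noise, the tail $\sum_{\tol=M}^{\infty} \ell(X_{\mathbf{U}^*_{\infty}}(\tol;\tcl,X_\tcl), U^*_{\infty}(\tol))$ can be identified with $J_\infty(Y_M, \mathbf{V})$, where $\mathbf{V} := (U^*_{\infty}(M), U^*_{\infty}(M+1),\ldots)$ denotes the time-shift of $\mathbf{U}^*_{\infty}$. Since $\mathbf{V}$ is admissible for the problem starting at $(\tcl+M, Y_M)$, this quantity is bounded below by $V_\infty(\tcl+M, Y_M)$, which gives the reverse inequality.

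The main obstacle will be the filtration bookkeeping. Concretely, I must verify that the shifted control $\mathbf{V}$ satisfies $\sigma(V(\tol)) \subseteq \sigma(X_{\mathbf{V}}(\tol; \tcl+M, Y_M))$ and analogously that $\hat{\mathbf{U}}$ is admissible with respect to the appropriate $\sigma$-algebras. This requires matching the shifted state process with the corresponding sub-path of the original trajectory (using pathwise uniqueness of the recursion \eqref{eq:stochSysTimeVarying}) and invoking independence of the tail noise $(W(\tcl+M+\tol))_{\tol \in \N_0}$ from the prefix noise used to generate $Y_M$. A secondary subtlety is that partial sums of a lower-bounded cost may attain $+\infty$; the assumption $|V_\infty(\tcl,X_\tcl)| < \infty$ combined with $g$ being bounded below ensures that every sum involved is a well-defined element of $\R \cup \{+\infty\}$, so the splitting and term-by-term comparison are rigorous.
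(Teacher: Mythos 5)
Your proposal is correct and follows essentially the same route as the paper, which does not prove this theorem itself but cites the standard deterministic argument of \cite[Theorem~4.4]{Gruene2017b}: the two-inequality prefix/suffix decomposition, with the upper bound via concatenation with an $\varepsilon$-optimal tail control and the lower bound via admissibility of the shifted optimal control. Your added care about the filtration condition for the concatenated and shifted controls and the identification of the shifted state process is exactly the bookkeeping needed to transfer that argument to the stochastic setting, so there is nothing further to flag.
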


    While these theorems are stated in terms of random variables---i.e., the optimal solutions are calculated for all possible realizations simultaneously---one can show that the DPP holds pathwise for each realization.
    We recall that a function is called universally measurable if it is measurable with respect to the universal $\sigma$-algebra $\cap_{\mathbb{Q} \in \mathcal{M}(\X)} \mathcal{B}_{\mathbb{Q}}(\X)$ with $\mathcal{B}_{\mathbb{Q}}(\X) := \{ E \subseteq \X \mid \mathbb{Q}(E) + \mathbb{Q}^*(E^c) = 1 \}$, where $\mathcal{M}(\X)$ is the space of probability measures on $\X$ and $\mathbb{Q}^*$ denotes the $\mathbb{Q}$-outer measure of $\mathbb{Q}$, cf.\ \cite[Equation~(87)]{Bertsekas1996b}.

    \begin{thm}[{cf.\ \cite[Proposition~8.5]{Bertsekas1996b}}]
    \label{thm:PointwiseDPP}
        Consider $\tcl \in \N_0$ and assume that for $\tol=0,\ldots,N-1$ the infimum in 
        \begin{equation} \label{eq:DPPpath}
            \inf_{u_\tol \in \U} \{ g(x_\tol,u_\tol) + V_{N-\tol-1}(\tcl+\tol,f(x_\tol,u_\tol,W(\tcl+\tol))) \}
        \end{equation}
        is achieved for each $x_\tol \in \X$. Define $\pi_\tol^*(x_\tol):=u_\tol^*$ for each $x_\tol\in\X$, where the $u_\tol^*\in\U$ form a measurable selection of minimizers of \eqref{eq:DPPpath}. 
        Then $\boldsymbol{\pi}^* := (\pi_0^*,\ldots,\pi_{N-1}^*)$ defines a sequence of universally measurable functions and 
        \begin{equation*}
            \pi^*_k \in \argmin_{\pi_\tol \in \Pi} \{ \ell(X,\pi_\tol(X)) + V_{N-\tol-1}(\tcl+\tol,f(X,\pi_\tol(X),W(\tcl+\tol))) \}
        \end{equation*}
        holds
        for all $X \in \RR{\Omega,\X}$ with $\vert V_N(\tcl,X) \vert < \infty$ and $\tol=0,\ldots,N-1$, where $\Pi$ is the set of all universally measurable functions $\pi: \X \rightarrow \U$.
    \end{thm}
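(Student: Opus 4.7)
My plan is to establish the two claims of the theorem separately---universal measurability of each $\pi_\tol^*$, and then the argmin characterization on the space of random variables. For the measurability, I would invoke the Bertsekas--Shreve universal selection theorem (Proposition~7.50 in \cite{Bertsekas1996b}). This requires the objective $(x,u)\mapsto g(x,u)+V_{N-\tol-1}(\tcl+\tol,f(x,u,W(\tcl+\tol)))$ to be lower semianalytic in $(x,u)$. Lower semicontinuity of $g$ handles the first summand; for the second, a backward induction on the horizon, starting from $V_0\equiv 0$, propagates lower semianalyticity of $V_{N-\tol-1}$ through the integration against the fixed distribution $P_W$ and through the pointwise infimum over $\U$. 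Since the infimum is attained by assumption, the selection theorem produces the required universally measurable selector, which we take as $\pi_\tol^*$.

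For the argmin characterization, I would fix an arbitrary universally measurable $\pi_\tol\in\Pi$ and any random variable $X\in\RR{\Omega,\X}$ with $|V_N(\tcl,X)|<\infty$. Pointwise optimality of $\pi_\tol^*$ yields
\[
 g(x,\pi_\tol^*(x)) + V_{N-\tol-1}(\tcl+\tol,f(x,\pi_\tol^*(x),W(\tcl+\tol))) \leq g(x,\pi_\tol(x)) + V_{N-\tol-1}(\tcl+\tol,f(x,\pi_\tol(x),W(\tcl+\tol)))
\]
for every $x\in\X$. Substituting $x=X(\omega)$, integrating against the distribution of $X$, using independence of $W(\tcl+\tol)$ from $X$, and exploiting the law-invariance of $V_{N-\tol-1}$ (so that its value at the mixture distribution coincides with the expectation of its values at the conditional distributions) converts the pointwise inequality into the corresponding inequality at the level of random-variable arguments. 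Combined with the identity $\ell(X,U)=\Exp{g(X,U)}$, this is precisely the claimed argmin property.

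I expect the main obstacle to be the backward induction underpinning universal measurability, since maintaining lower semianalyticity of $V_{N-\tol-1}$ through the alternating chain of integrations and infima is technically subtle---this is precisely the content developed in detail in \cite{Bertsekas1996b}. The lifting step is conceptually cleaner, but requires careful interpretation of $V_{N-\tol-1}$ at random-variable inputs; the finite-value hypothesis $|V_N(\tcl,X)|<\infty$ together with Theorem~\ref{thm:FiniteDPP} secures the integrability needed to pass from the pointwise inequality to the statement for $X\in\RR{\Omega,\X}$.
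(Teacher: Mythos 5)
The paper does not prove this theorem at all: it is stated as a citation to \cite[Proposition~8.5]{Bertsekas1996b}, and the surrounding machinery (universal measurability, the universal $\sigma$-algebra, Lemma~\ref{lem:measurable}) is exactly the Bertsekas--Shreve framework your sketch invokes. Your two-step plan---universal measurable selection via lower semianalyticity propagated by backward induction, then lifting the pointwise inequality to random-variable arguments by integration against $P_X$ and independence of $W(\tcl+\tol)$---is the standard route by which the cited result is established, so it is consistent with what the paper relies on. The one place your sketch is thin is the ``law-invariance'' step: the identity that $V_{N-\tol-1}$ evaluated at a mixture equals the integral of its values at the conditional components is not a mere bookkeeping fact but is essentially equivalent to the interchange of infimum and expectation that Proposition~8.5 encodes; its proof of the ``$\leq$'' direction itself requires the measurable selection of near-optimal pathwise controls, so you should present it as part of the backward induction rather than as an independently available property, to avoid assuming what is being shown.
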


    Theorem~\ref{thm:PointwiseDPP} shows that we can perform the calculations from Algorithm~\ref{alg:abstractStochMPC} pathwise for each realization, which leads to Algorithm~\ref{alg:implementableStochMPC}.
    Note that this algorithm can be implemented in a real plant and is therefore of particular practical relevance, as it works without knowledge of the full state distribution, but solely with measurements of the realizations.
    The full feedback law from Algorithm~\ref{alg:abstractStochMPC} would then be regained by performing Algorithm~\ref{alg:implementableStochMPC} for every possible realization.
    Note that, in contrast to Algorithm~\ref{alg:abstractStochMPC}, the MPC feedback obtained in each iteration is not only induced by a deterministic feedback policy, but is in fact a deterministic value, since the initial condition $x_j \in \X$ is deterministic.

    \begin{algorithm}
        \caption{Implementable stochastic MPC algorithm}
        \label{alg:implementableStochMPC}
        \begin{algorithmic}
            \For{$\tcl=0,1,\ldots$}
                \State 1.) Measure the state $x_\tcl := X(\tcl,\omega)$ and set $X_0 \equiv x_\tcl$.
                \State 2.) Solve the stochastic optimal control problem \eqref{eq:stochOCP} and obtain the optimal control sequence
                $$\mathbf{U}^*_{N,\tcl,x_\tcl} = (U^*_{N,\tcl,x_\tcl}(0), \ldots, U^*_{N,\tcl,x_\tcl}(N-1)) = (\pi^*_0(X(0;\tcl,x_\tcl)), \ldots, \pi^*_{N-1}(X(N-1;\tcl,x_\tcl))).$$
                \State 3.) Apply the MPC feedback $\mu_N(x_\tcl) := U^*_{N,x_\tcl}(0)$ to system \eqref{eq:stochSys}.
            \EndFor
        \end{algorithmic}
    \end{algorithm}

    The remaining problem is that Theorem~\ref{thm:PointwiseDPP} only guarantees that the control sequence which is computed in that way is \emph{universally} 
    measurable, cf.\ \cite[Definition~7.18]{Bertsekas1996b}, and thus may not be a feasible solution of problem~\eqref{eq:stochOCPopenloop}.
    Hence, we must ensure that we can also construct a measurable control sequence out of the pathwise solutions from Theorem~\eqref{thm:PointwiseDPP}. This is covered by the following lemma.

    \begin{lem}
    \label{lem:measurable}
        For every universally measurable function $\pi: \X \rightarrow \U$ and every $\mathbb{Q} \in \mathcal{M}(\X)$ there is a measurable function $\pi_{\mathbb{Q}} : \X \rightarrow \U$ such that $\pi = \pi_{\mathbb{Q}}$ holds $\mathbb{Q}$-almost surely.
    \end{lem}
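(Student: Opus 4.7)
The plan is to exploit that $\U$, being a separable Banach space, is Polish and hence second countable, and to construct $\pi_{\mathbb{Q}}$ by modifying $\pi$ on a single Borel $\mathbb{Q}$-null set that controls the preimages of all Borel sets of $\U$ simultaneously.

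First I would fix a countable basis $\{B_n\}_{n \in \N}$ of the topology of $\U$. Since $\pi$ is universally measurable, it is in particular $\mathcal{B}_{\mathbb{Q}}(\X)/\mathcal{B}(\U)$-measurable, so each preimage $\pi^{-1}(B_n)$ lies in the $\mathbb{Q}$-completion of the Borel $\sigma$-algebra. Using the standard representation of the completion, for every $n$ I would choose a Borel set $A_n \in \mathcal{B}(\X)$ and a Borel $\mathbb{Q}$-null set $N_n$ such that $\pi^{-1}(B_n) \triangle A_n \subseteq N_n$. Setting $N := \bigcup_{n \in \N} N_n$ then yields a single Borel set with $\mathbb{Q}(N) = 0$ that handles all basis preimages at once, since $\pi^{-1}(B_n) \setminus N = A_n \setminus N \in \mathcal{B}(\X)$.

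To upgrade this observation from the basis $\{B_n\}$ to all of $\mathcal{B}(\U)$, I would introduce the family
\[
\mathcal{G} := \bigl\{ B \in \mathcal{B}(\U) \,\big|\, \pi^{-1}(B) \setminus N \in \mathcal{B}(\X) \bigr\}
\]
and check directly that $\mathcal{G}$ is closed under complements (using $(\X \setminus N) \setminus (\pi^{-1}(B) \setminus N)$) and countable unions. Hence $\mathcal{G}$ is a $\sigma$-algebra on $\U$ containing every $B_n$, and since $\{B_n\}$ generates $\mathcal{B}(\U)$ by second countability, this forces $\mathcal{G} = \mathcal{B}(\U)$. In other words, the restriction of $\pi$ to $\X \setminus N$ is Borel-measurable into $\U$.

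Finally I would define
\[
\pi_{\mathbb{Q}}(x) := \begin{cases} \pi(x), & x \in \X \setminus N, \\ u_0, & x \in N, \end{cases}
\]
for an arbitrary fixed $u_0 \in \U$. For every Borel set $B \subseteq \U$, the preimage $\pi_{\mathbb{Q}}^{-1}(B)$ is the disjoint union of the Borel set $\pi^{-1}(B) \setminus N$ with either the Borel set $N$ (if $u_0 \in B$) or $\emptyset$, hence is Borel; so $\pi_{\mathbb{Q}}$ is Borel-measurable, and it coincides with $\pi$ on $\X \setminus N$, a set of full $\mathbb{Q}$-measure. The main subtlety is the passage from the countable basis to the full Borel $\sigma$-algebra on $\U$; without the $\sigma$-algebra argument for $\mathcal{G}$ one would have to control a potentially uncountable family of exceptional null sets, and it is precisely second countability of $\U$ that allows this to be bypassed.
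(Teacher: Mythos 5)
Your proof is correct. The paper disposes of this lemma in one line: it observes that universal measurability implies measurability with respect to the completed Borel $\sigma$-algebra $\mathcal{B}_{\mathbb{Q}}(\X)$ and then cites Lemma~1.2 of Crauel's monograph on random probability measures, which is exactly the statement that a completion-measurable map into a Polish space agrees $\mathbb{Q}$-almost surely with a Borel-measurable map. You share the first step but then prove the cited lemma from scratch rather than invoking it: you use second countability of $\U$ to reduce the (a priori uncountable) family of exceptional null sets to a single Borel null set $N$ covering all basis preimages, run a standard ``good sets'' argument to show that $\{B \in \mathcal{B}(\U) \mid \pi^{-1}(B)\setminus N \in \mathcal{B}(\X)\}$ is a $\sigma$-algebra equal to $\mathcal{B}(\U)$, and then redefine $\pi$ on $N$ by a constant. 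All of these steps are sound, including the closure-under-complements check and the final verification that $\pi_{\mathbb{Q}}^{-1}(B)$ decomposes into Borel pieces. What your version buys is self-containedness and transparency about where separability of $\U$ enters (it is what collapses the exceptional sets to a single null set); what the paper's version buys is brevity and a pointer to a reference that handles the general Polish-space setting. Either is acceptable; yours is a legitimate, more elementary replacement for the citation.
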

    \begin{proof}
        Since $\pi$ is universally measurable, it is measurable with respect to the completed Borel $\sigma$-algebra 
        $\mathcal{B}_{\mathbb{Q}}(\X)$
        given any $\mathbb{Q} \in \mathcal{M}(\X)$. Thus, the claim directly follows by applying \cite[Lemma~1.2]{Crauel2002}.
    \end{proof}

    Now we can use Lemma~\ref{lem:measurable} to conclude that the closed-loop trajectories and performance of Algorithm~\ref{alg:abstractStochMPC} and Algorithm~\ref{alg:implementableStochMPC} coincide almost-surely.
    Here, we measure the closed-loop performance of a measurable feedback law $\mu: \X \to \U$ over a horizon $K \in \N$ by
    \[ J^{cl}_K(X_0, \mu) := \sum_{\tcl=0}^{K-1} \ell(X_{\mu}(\tcl;0,X_0),\mu(X_{\mu}(\tcl;,0,X_0)))\]
    where $X_{\mu}(\cdot,0,X_0)$ denotes the closed-loop trajectory defined by system \eqref{eq:stochSysTimeVarying} with control $U(\tol) = \mu(X_{\mu}(\tol))$ and initial condition $X_{\mu}(0) = X_0$.

    \begin{coro}
    \label{cor:mpcalg}
        Consider $N$, $\tcl \in \N$, and initial state $X_0 \in \RR{\Omega,\X}$ and let $\mu_N^{a}$ be the feedback law from the abstract MPC Algorithm~\ref{alg:abstractStochMPC}. 
        Then there exists a measurable feedback law $\mu_N^{i}$ generated pathwisely by the implementable Algorithm~\ref{alg:implementableStochMPC} such that 
        \begin{equation} \label{eq:solutionsCoincide}
             \mu_N^{a}(X_{\mu_N^{a}}(\tol)) = \mu_N^{i}(X_{\mu_N^{i}}(\tol)), \quad
             X_{\mu_N^{a}}(\tol) = X_{\mu_N^{i}}(\tol)
        \end{equation}
        holds $\mathbb{P}$-almost surely for all $\tol \in \N_0$ and the closed-loop cost satisfies
        $$J^{cl}_K(X_0, \mu_N^{a}) =  J^{cl}_K(X_0, \mu_N^{i}).$$
    \end{coro}
    \begin{proof}
        Consider that $X_{\mu_N^{a}}(\tol) = X_{\mu_N^{i}}(\tol)$ for some $\tol \in \N_0$. Then using Theorem~\ref{thm:PointwiseDPP} in combination with Lemma~\ref{lem:measurable} we obtain that $\mu_N^{a}(X_{\mu_N^{a}}(\tol)) = \mu_N^{i}(X_{\mu_N^{i}}(\tol))$ holds almost surely, which also yields $X_{\mu_N^{a}}(\tol+1) = X_{\mu_N^{i}}(\tol+1)$. 
        Hence, equation~\eqref{eq:solutionsCoincide} follows by induction since the initial condition is fixed and thus $X_{\mu_N^{a}}(0) = X_{\mu_N^{i}}(0) = X_0$ holds per definition.
        Furthermore, since the stage costs $\ell(X,U) = \Exp{g(X,U)}$ are only depending on the joint distribution of the pair $(X,U)$, the identity for the costs is directly implied by equation~\eqref{eq:solutionsCoincide}.
    \end{proof}

    \begin{rem}
        \begin{enumerate}[(i)]
            \item As our proof techniques only yield properties of Algorithm~\ref{alg:abstractStochMPC} directly,
            Corollary \ref{cor:mpcalg} is pivotal for our analysis, as it guarantees that the results we will derive in the remainder of this paper for the MPC closed-loop trajectory generated by the abstract MPC Algorithm~\ref{alg:abstractStochMPC} will also be valid for the MPC closed-loop trajectories generated by the practically implementable MPC Algorithm~\ref{alg:implementableStochMPC}. 
            \item As we do not assume uniqueness of the optimal controls, Algorithm~\ref{alg:abstractStochMPC} may not yield a unique MPC closed-loop trajectory. However, the estimates we derive below are valid for all possible closed-loop trajectories.
            \item In practice, only finitely many evaluations of $\mu_N=\mu_N^i$ will be performed in Algorithm~\ref{alg:implementableStochMPC}. Since a finite selection of values is always measurable, the requirement of $\mu_N^i$ being measurable does not have any practical implications when running Algorithm~\ref{alg:implementableStochMPC}.
        \end{enumerate}
    \end{rem} \label{sec:stochDPP}

\section{Stochastic dissipativity and turnpikes} \label{sec:DissiTurnpike}

Our closed-loop results rely on a stochastic notion of dissipativity and the so-called turnpike property.
These concepts, together with their relation to MPC and receding horizon control, are well established in deterministic settings, cf. \cite{Faulwasser2018, Faulwasser2022, Gruene2022, Gruene2013, Gruene2016, Breiten2020, Angeli2012, Mueller2014, Koehler2018}.
More recently, dissipativity and turnpike properties have been extended to stochastic systems \cite{Sun2022, Sun2023, Gros2022, Schiessl2023a, Schiessl2024a, Schiessl2025}.
In order to formulate these notions in a stochastic framework, an appropriate analogue of the deterministic steady state is required.
To this end, we recall the following definition of a stationary process.

\begin{defi}[Stationary stochastic processes] \label{defn:stationaryProcess}
    A pair of stochastic processes $(\mathbf{X}^s,\mathbf{U}^s)$ given by
    \begin{equation} \label{eq:sys_stat}
        X^s(\tol+1) = f(X^s(\tol), U^s(\tol), W(\tol))
    \end{equation}
    with $\mathbf{U}^s \in \UU_{ad}^{\infty}(0,X^s(0))$ is called stationary for system \eqref{eq:stochSysTimeVarying} if there exist probability distributions $P^s_X$, $P^s_U$, and $P^s_{X,U}$ with
    \begin{equation*}
    \begin{split}
        X^s(\tol) \sim P^s_X, \quad U(\tol) \sim P^s_U, \quad (X^s(\tol),U^s(\tol)) \sim P^s_{X,U}
    \end{split}
    \end{equation*}
    for all $\tol \in \N_0$. 
\end{defi}

\begin{rem}
    \begin{enumerate}[(i)]
        \item The existences of a stationary stochastic process is equivalent to the existence of a stationary distribution-policy pair. Thus, the infinite horizon characterization from Definition~\ref{defn:stationaryProcess} can be reformulated as an equilibrium problem on the space of probability distributions and Markov policies. For more details on this we refer to \cite[Section~II.B]{Schiessl2024a}.
        \item Since the joint distribution of the stationary pair $(\mathbf{X}^s, \mathbf{U}^s)$ remains invariant over time, the corresponding stage cost is also constant at all times.
        Accordingly, we denote this constant value by $\ell(\mathbf{X}^s, \mathbf{U}^s)$ from here on.
    \end{enumerate}
\end{rem}

Now we can use the stationary pair of stochastic processes from Definition~\ref{defn:stationaryProcess} to introduce our notion of stochastic dissipativity. To this end, we make use of a (pseudo)metric $d: \RR{\Omega,\X} \times \RR{\Omega,\X} \to [0,\infty]$. 
Throughout the paper we assume that $d$ is well defined for all random variables in $\RR{\Omega,\X}$, though it may take the value $+\infty$. 

Furthermore, we adopt the standard classes of comparison functions, cf.\ \cite{Khalil2002}:
\begin{align*}
    \K &:= \{ \alpha: \R_0^+ \to \R_0^+ \mid \alpha \text{ is continuous and strictly increasing, with } \alpha(0)=0 \}, \\
    \K_{\infty} &:= \{ \alpha: \R_0^+ \to \R_0^+ \mid \alpha \in \K \text{ and } \alpha \text{ is unbounded} \}, \\
    \mathcal{L} &:= \{ \delta: \R_0^+ \to \R_0^+ \mid \delta \text{ is continuous and strictly decreasing, with } \lim_{t \to \infty}\delta(t)=0 \}, \\
    \K\mathcal{L} &:= \{ \beta: \R_0^+ \times \R_0^+ \to \R_0^+ \mid \beta \text{ is continuous, } \beta(\cdot,t) \in \K~\forall t \in \R_0^+, \ \beta(r,\cdot) \in \mathcal{L}~\forall r \in \R_0^+ \}.
\end{align*}

\begin{defi}[Stochastic dissipativity] \label{defn:stochDissi}
    Consider a pair of stationary stochastic processes $(\mathbf{X}^s,\mathbf{U}^s)$ according to Definition~\ref{defn:stationaryProcess} with $\vert \ell(\mathbf{X}^s,\mathbf{U}^s) \vert < \infty$ and a (pseudo)metric $d$ on the space of random variables.
    Then, we call the stochastic optimal control problem \eqref{eq:stochOCPopenloop} strictly stochastically dissipative at $(\mathbf{X}^s,\mathbf{U}^s)$ with respect to the (pseudo)metric $d$, if there exists a storage function $\lambda: \N_0 \times \RR{\Omega,\X} \rightarrow \R$ uniformly bounded from below and a function $\alpha \in \K_{\infty}$ 
    such that 
    \begin{equation} \label{eq:DissiIneq}
    \begin{split}
        \tilde{\ell}(\tcl,X,U) := \ell(&X,U) - \ell(\mathbf{X}^s,\mathbf{U}^s)
        + \lambda(\tcl,X) - \lambda(\tcl+1,f(X,U,W(\tcl))
        \geq \alpha( d(X,X^s(\tcl)) )
    \end{split}
    \end{equation}
    holds for all $\tcl \in \N_0$ and all $U \in \UU_{ad}(X)$. The system is called dissipative if inequality \eqref{eq:DissiIneq} holds with $\alpha \equiv 0$.
\end{defi}

Originally, dissipativity was introduced by Willems in \cite{Willems1972part1,Willems1972part2} with an energy-based interpretation, stating that a system cannot generate energy internally but only store energy supplied from the outside, while strict dissipativity additionally requires that a certain amount of stored energy is irreversibly dissipated to the environment.
Indeed, Willems' ambition was to extend Lyapunov concepts to systems with inputs and outputs:
``A generalization of Lyapunov functions to open systems, to systems with inputs and outputs.'' \cite{Willems2007}.
In optimal control, dissipativity links the running cost to a storage function that captures the ability of a system to accumulate and release energy over time. In the strict case, it further penalizes persistent deviations from an optimal steady state through an additional dissipation term. 
This interpretation naturally extends to the stochastic setting by replacing the deterministic steady state with a stationary stochastic process, reflecting that optimal operation is characterized not by a single state but by invariant statistical behavior.
Moreover, the metric or pseudometric employed in the dissipation term specifies which aspects of the deviation from the optimal stationary process are considered relevant, such as moment discrepancies, distributional distances, or pathwise fluctuations, leading to stronger or weaker implications, as we see later.

The next definition introduces stochastic dissipativity notions with respect to specific metrics that are of particular interest in this paper and have already been partially introduced in \cite{Schiessl2025}.

\begin{defi}[Types of stochastic dissipativity] \label{defn:stochDissiTypes}
    Assume that the optimal control problem is (strictly) stochastically dissipative according to Definition \ref{defn:stochDissi}. Then we say that it is (strictly)
    \begin{enumerate}[(i)]
        \item \emph{$L^p$ dissipative} if $d$ is the $L^p$-norm, i.e.\ 
        \begin{equation} \label{eq:LpNorm}
            d(X,Y) = \Vert X - Y \Vert_{L^p} := \Exp{\Vert X - Y \Vert^p}^{\frac{1}{p}}.
        \end{equation}
        \item \emph{pathwise-in-probability dissipative} if $d$ is the Ky-Fan metric, i.e.\ 
        \begin{equation} \label{eq:KyFanMetric}
            d(X,Y) = d_{KF}(X,Y) := \inf_{\eps > 0} \{\Prob(\Vert X - Y \Vert > \eps) \leq \eps \}.
        \end{equation}
        \item \emph{distributionally dissipative} if $d$ is a metric on the space of probability measures $\mathcal{M}(\X)$, or more precisely
        \begin{enumerate}[(a)]
            \item \emph{Wasserstein dissipative of order $p$} if $d$ is the Wasserstein distance of order $p$, i.e\ 
            \begin{equation} \label{eq:WassersteinMetric}
                d(X,Y) = \inf \{ \Vert \bar{X} - \bar{Y} \Vert_{L^p} \mid \bar{X} \sim X, \bar{Y} \sim Y \}.
            \end{equation}
            \item \emph{weakly distributionally dissipative} if $d$ is the Lévy-Prokhorov metric, i.e.\ 
            \begin{equation} \label{eq:LevyProhkorovMetric}
                d(X,Y) = \inf \{ d_{KF}(\bar{X}, \bar{Y}) \mid \bar{X} \sim X, \bar{Y} \sim Y \}.
            \end{equation}
        \end{enumerate}
        \item \emph{$p$-th moment dissipative} if 
        \begin{equation} \label{eq:MomentsMetric}
            d(X,Y) = \left\vert \Exp{\Vert X \Vert^p}^{\frac{1}{p}} - \Exp{\Vert Y \Vert^p}^{\frac{1}{p}} \right\vert.
        \end{equation}
    \end{enumerate}
\end{defi}

\begin{rem}
    Note that the well-definedness assumption preceding Definition~\ref{defn:stochDissi} is satisfied for all metrics introduced in Definition~\ref{defn:stochDissiTypes}(i)-(iii). This is because probabilities always lie in $[0,1]$, and for nonnegative random variables $Z$ the expected value $\E[Z]$ always exists in $\R_{\geq 0} \cup \{\infty\}$. 
    The metric form (iv) satisfies the well-definedness assumption if at least one of the random variables has a finite $p$-th moment.
\end{rem}
As examples of stochastically dissipative optimal control problems, we refer to \cite{Schiessl2025}, where it was shown that the class of generalized linear-quadratic stochastic optimal control problems possesses these properties, and to \cite{Schiessl2024a} for a nonlinear example.

In addition to dissipativity, the turnpike property will play an important role in our analysis.
The turnpike property states that optimal trajectories are, for most of the time, close to a stationary stochastic process.
This means that although the optimal solutions of the problem \eqref{eq:stochOCP} are not stationary, they are approximately stationary except for a number of points bounded independently of the horizon $N$.
This property is formalized in the following definition.
Here, $\#\mathcal{Q}$ denotes the number of elements of the set $\mathcal{Q}$.

\begin{defi}[Stochastic turnpike properties] \label{defn:stochTurnpike}
    Consider a stationary pair $(\mathbf{X}^s,\mathbf{U}^s)$ with $\vert \ell(\mathbf{X}^s,\mathbf{U}^s) \vert < \infty$ and a (pseudo)metric $d$ on the space $\RR{\Omega,\X}$ and define the closed ball around $X^s(\tcl)$ with respect to the (pseudo)metric $d$ and radius $r>0$ as
    \begin{equation}
        \Bcl^d_r(X^s(\tcl)) := \{X \in \RR{\Omega,\X} \mid d(X,X^s(\tcl)) \leq r \}.
    \end{equation}   
    Then, we say that the optimal control problem \eqref{eq:stochOCPopenloop} has the uniform stochastic turnpike property locally around $\mathbf{X}^s$ on 
    \begin{enumerate}[(i)]
        \item finite horizon if there exists $r >0$ and $\vartheta \in \mathcal{L}$ such that for each $\tcl \in \N_0$, each optimal trajectory $X_{\mathbf{U}^*_{N}}(\cdot;\tcl,X_\tcl)$ with $X_\tcl \in \Bcl_r^d(X^s(\tcl))$ and all $N,L \in \N$ there is a set $\mathcal{Q}(\tcl,X_\tcl,L,N) \subseteq \{0,\ldots,N\}$ with $\# \mathcal{Q}(\tcl,X_\tcl,L,N) \leq L$ elements and 
        \begin{equation*}
            d \left( X_{\mathbf{U}^*_{N}}(\tol;\tcl,X_\tcl) , X^s(\tol+\tcl) \right) \leq \vartheta(L)
        \end{equation*}
        for all $\tol \in \{0,\ldots,N\} \setminus \mathcal{Q}(\tcl,X_\tcl,L,N)$;
        \item infinite horizon if there exists $r >0$ and $\vartheta_{\infty} \in \mathcal{L}$ such that for each $\tcl \in \N_0$, each optimal trajectory $X_{\mathbf{U}^*_{\infty}}(\cdot;\tcl,X_\tcl)$ with $X_\tcl \in \Bcl_r^d(X^s(\tcl))$ and all $L \in \N$ there is a set $\mathcal{Q}(\tcl,X_\tcl,L,\infty) \subseteq \N_0$ with $\# \mathcal{Q}(\tcl,X_\tcl,L,\infty) \leq L$ elements and 
        \begin{equation*}
            d \left( X_{\mathbf{U}^*_{\infty}}(\tol;\tcl,X_\tcl) , X^s(k+\tol) \right) \leq \vartheta_{\infty}(L)
        \end{equation*}
        for all $\tol \in \N_0 \setminus \mathcal{Q}(\tcl,X_\tcl,L,\infty)$.
    \end{enumerate}
\end{defi}

While the term \emph{uniform} in Definition~\ref{defn:stochTurnpike} refers to the fact that the function $\vartheta$ does not depend on the initial time $\tcl$ and state $X_\tcl$, the phrase \emph{locally} indicates that the property must only hold for trajectories that are sufficiently near to the stationary process at time $\tcl$.

Again, different choices of the metric in Definition~\ref{defn:stochTurnpike} will lead to stronger or weaker properties.
The next definition introduces special turnpike properties with respect to specific metrics analogously to Definition~\ref{defn:stochTurnpikeTypes}.

\begin{defi}[Types of stochastic turnpike properties] \label{defn:stochTurnpikeTypes}
    Assume that the optimal control problem has the stochastic turnpike property according to definition \ref{defn:stochTurnpike}. Then we say that it has 
    \begin{enumerate}[(i)]
        \item the \emph{$L^p$ turnpike property} if $d$ is the $L^p$-norm from equation~\eqref{eq:LpNorm}.
        \item the \emph{pathwise-in-probability turnpike property} if $d$ is the Ky-Fan metric from equation \eqref{eq:KyFanMetric}.
        \item the \emph{distributional turnpike property} if $d$ is a metric on the space of probability measures $\mathcal{M}(\X)$, or more precisely
        \begin{enumerate}[(a)]
            \item the \emph{Wasserstein turnpike property of order $p$} if $d$ is the Wasserstein distance of order $p$ from equation~\eqref{eq:WassersteinMetric}.
            \item the \emph{weak distributional turnpike property} if $d$ is the Lévy-Prokhorov metric from equation~\eqref{eq:LevyProhkorovMetric}.
        \end{enumerate}
        \item the \emph{$p$-th moment turnpike property} if $d$ is the metric from equation~\eqref{eq:MomentsMetric}.
    \end{enumerate}
\end{defi}

There is a deep connection between (strict) dissipativity and turnpike properties. For example, it has been shown in~\cite{Schiessl2025} that strict dissipativity implies the turnpike property for near-steady-state solutions in stochastic settings.
In this paper, however, we are more concerned with optimal solutions rather than near-steady-state behavior. As shown below, this implication also holds for optimal solutions under slightly different conditions.
To this end, we introduce the following definition of cheap reachability, describes the property that the system can be steered toward an optimal stationary process at a bounded additional cost relative to the cost required to maintain the state in the stationary regime, independently of the horizon $N$.

\begin{defi}[Cheap reachability]
    \label{defn:cheapReachability}
    Consider a stationary pair $(\mathbf{X}^s,\mathbf{U}^s)$ and a (pseudo)\-metric $d$ on the space $\RR{\Omega,\X}$.
    Then, the stationary process $\mathbf{X}^s$ is called cheaply reachable on 
    $$\Bcl_r^d(\mathbf{X}^s) := \{ (\tcl,X_\tcl) \mid \tcl \in \N_0, ~ X_\tcl \in \Bcl_r^d(X^s(\tcl)) \},$$
    if there exists $r,C_V > 0$ such that for each $(\tcl,X_\tcl) \in \Bcl_r^d(\mathbf{X}^s)$ and all $N \in \N$ the inequality $V_N(\tcl,X_\tcl) - N \ell(\mathbf{X}^s,\mathbf{U}^s) \leq C_V$ holds.
\end{defi}

We can now show that strict dissipativity also implies the turnpike property for optimal solutions, assuming cheap reachability and an additional boundedness property of the storage function.

\begin{thm} \label{thm:Turnpike} 
    Assume that the optimal control problem~\eqref{eq:stochOCPopenloop} is strictly stochastically dissipative at $(\mathbf{X}^s,\mathbf{U}^s)$ with respect to some (pseudo)metric $d$.
    Further, assume that $\mathbf{X}^s$ is cheaply reachable on $\Bcl^d_r(\mathbf{X}^s)$ and that the storage function $\lambda$ is uniformly bounded on $\Bcl^d_r(\mathbf{X}^s)$.
    Then, the stochastic optimal control problem~\eqref{eq:stochOCPopenloop} has the uniform stochastic finite-horizon turnpike property on $\Bcl^d_r(\mathbf{X}^s)$ with respect to the (pseudo)metric $d$.
\end{thm}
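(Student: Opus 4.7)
The plan is to adapt the classical dissipativity-to-turnpike argument from deterministic economic MPC to the present stochastic setting, replacing the usual summation of the rotated stage cost along a deterministic trajectory by the analogous summation over the random variables produced by the stationary process and the optimal trajectory, with all bounds measured in the (pseudo)metric $d$.

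First, I would fix an initial time $\tcl \in \N_0$ and an initial state $X_\tcl \in \Bcl^d_r(X^s(\tcl))$, together with a finite horizon $N \in \N$ and an optimal control $\mathbf{U}^*_N \in \UU_{ad}^N(\tcl,X_\tcl)$. Summing the dissipation inequality \eqref{eq:DissiIneq} along the optimal trajectory $X^*(\cdot) := X_{\mathbf{U}^*_N}(\cdot;\tcl,X_\tcl)$ and telescoping the storage-function differences yields
\begin{equation*}
\sum_{\tol=0}^{N-1}\alpha\!\left(d(X^*(\tol),X^s(\tcl+\tol))\right)
\leq J_N(X_\tcl,\mathbf{U}^*_N) - N\,\ell(\mathbf{X}^s,\mathbf{U}^s) + \lambda(\tcl,X_\tcl) - \lambda(\tcl+N,X^*(N)).
\end{equation*}
By optimality, $J_N(X_\tcl,\mathbf{U}^*_N) = V_N(\tcl,X_\tcl)$, so cheap reachability gives $V_N(\tcl,X_\tcl)-N\,\ell(\mathbf{X}^s,\mathbf{U}^s)\leq C_V$. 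The assumed uniform upper bound on $\lambda$ on $\Bcl^d_r(\mathbf{X}^s)$ and the universal lower bound of the storage function handle the remaining two terms. Altogether, the right-hand side is bounded by a constant $C>0$ that is independent of $\tcl$, $X_\tcl$ and $N$.

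The final step is a standard pigeonhole/counting argument on the uniformly bounded sum. Given $L \in \N$, define $\vartheta(L) := \alpha^{-1}(C/L)$, which belongs to $\mathcal{L}$ since $\alpha \in \K_\infty$, and let
\begin{equation*}
\mathcal{Q}(\tcl,X_\tcl,L,N) := \{\tol \in \{0,\ldots,N\} \mid d(X^*(\tol),X^s(\tcl+\tol)) > \vartheta(L)\}.
\end{equation*}
Every $\tol \in \mathcal{Q}(\tcl,X_\tcl,L,N)$ contributes strictly more than $\alpha(\vartheta(L))=C/L$ to the sum above, so $\#\mathcal{Q}(\tcl,X_\tcl,L,N) \cdot (C/L) \leq C$, i.e., $\#\mathcal{Q}(\tcl,X_\tcl,L,N)\leq L$. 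On the complement the desired estimate $d(X^*(\tol),X^s(\tcl+\tol))\leq \vartheta(L)$ holds by construction, which is exactly the uniform finite-horizon turnpike property on $\Bcl^d_r(\mathbf{X}^s)$.

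\textbf{Main obstacle.} The routine algebra is standard; the delicate point is the telescoping step, where one has to make sure that $\lambda(\tcl+N,X^*(N))$ can be controlled even though $X^*(N)$ need not lie in the turnpike ball $\Bcl^d_r(X^s(\tcl+N))$. I plan to circumvent this using only the uniform lower bound on $\lambda$ that is already part of Definition~\ref{defn:stochDissi}, which turns $-\lambda(\tcl+N,X^*(N))$ into a finite upper contribution. A secondary subtlety is that the stage cost, storage function and metric $d$ all act on random variables in $\RR{\Omega,\X}$ rather than on points in $\X$; however, Definition~\ref{defn:stochDissi} is phrased directly at the level of random variables, so the telescoping and the counting argument transfer verbatim, independently of which of the specific metrics from Definition~\ref{defn:stochDissiTypes} is used.
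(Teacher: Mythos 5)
Your proposal is correct and takes essentially the same route as the paper's proof: both sum the dissipation inequality \eqref{eq:DissiIneq} along the optimal trajectory, telescope the storage terms, bound them using the uniform bound $C_\lambda$ on $\Bcl^d_r(\mathbf{X}^s)$ together with the universal lower bound $C^l_\lambda$ and cheap reachability $C_V$, and obtain the turnpike bound with $\vartheta(L)=\alpha^{-1}\left((C_\lambda+C^l_\lambda+C_V)/L\right)$. The only difference is presentational: the paper phrases the final counting step as a proof by contradiction, whereas you run the pigeonhole argument directly.
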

\begin{proof}
    By strict dissipativity we know that there are $\alpha \in \K_{\infty}$, $C^l_{\lambda} > 0$ and $\lambda$ with $\lambda(\tcl,X_\tcl) > -C^l_{\lambda}$ for all $\tcl \in \N_0$ and $X_\tcl \in \RR{\Omega,\X}$ such that
    \begin{equation}
    \begin{split} \label{eq:lowerBoundCost}
        V_N(&\tcl,X_\tcl) - N\ell(\mathbf{X}^s,\mathbf{U}^s) \\
        &=
        \sum_{\tol=0}^{N-1} \left( \ell(X_{\mathbf{U}_N^*}(\tol;\tcl,X_\tcl), U(\tol)) - \ell(\mathbf{X}^s,\mathbf{U}^s) \right) \\
        \geq& \sum_{\tol=0}^{N-1} \big(\alpha(d(X_{\mathbf{U}_N^*}(\tol;\tcl,X_\tcl),X^s(\tcl+\tol)))
        - \lambda(\tcl+\tol,X_{\mathbf{U}_N^*}(\tol;\tcl,X_\tcl)) \\
        &~~~~~~+ \lambda(\tcl+\tol+1,X_{\mathbf{U}_N^*}(\tol+1;\tcl,X_\tcl)) \big) \\
        =&  - \lambda(\tcl,X_{\mathbf{U}_N^*}(0;\tcl,X_\tcl)) + \lambda(\tcl+N,X_{\mathbf{U}_N^*}(N;\tcl,X_\tcl)) \\
        &+ \sum_{\tol=0}^{N-1} \alpha(d(X_{\mathbf{U}_N^*}(\tol;\tcl,X_\tcl),X^s(\tcl+\tol))).
    \end{split}
    \end{equation}
    Now suppose that the finite-horizon turnpike property does not hold with 
    \begin{equation} \label{eq:vartheta}
        \vartheta(L) := \alpha^{-1}\left( \dfrac{C_\lambda + C^l_{\lambda} + C_V}{L} \right) \in \mathcal{L}
    \end{equation}
    where $C_V > 0$ is from Definition~\ref{defn:cheapReachability} and $C_{\lambda} > 0$ is an uniform bound on $\lambda$, i.e.\ $\vert \lambda(\tcl,X_\tcl) \vert < C_{\lambda}$ for all $X_\tcl \in \Bcl^d_r(X^s(\tcl))$ and all $\tcl \in \N_0$.
    Then, there exists $N \in \N$, $X_\tcl \in \Bcl_r^d(X^s(\tcl))$, $L \in \N$ and a set $\mathcal{M} \subset \lbrace 0,\ldots,N-1 \rbrace$ of at least $L+1$ time instants such that $d(X_{\mathbf{U}_N^*}(\tol;\tcl,X_\tcl),X^s(\tcl+\tol)) > \vartheta(L)$. 
    Using equation~\eqref{eq:lowerBoundCost} this yields
    \begin{equation*}
    \begin{split}
        V_N(\tcl,X_\tcl) - N\ell(\mathbf{X}^s,\mathbf{U}^s) 
        \geq& - \lambda(\tcl,X_{\mathbf{U}_N^*}(0;\tcl,X_\tcl)) + \lambda(\tcl+N,X_{\mathbf{U}_N^*}(N;\tcl,X_\tcl)) \\
        &+ \sum_{\tol \in \mathcal{M}} \alpha(d(X_{\mathbf{U}_N^*}(\tol;\tcl,X_\tcl),X^s(\tcl+\tol))) \\
        >&- C_{\lambda} - C^l_{\lambda} + L\alpha(\vartheta(L)) = C_V,
    \end{split}
    \end{equation*}
    which contradicts the cheap reachability assumption and thus proves the claim.
\end{proof}

\section{Stability of stochastic MPC} \label{sec:Stability}
After introducing the tools that form the basis of our findings, we begin analyzing the closed-loop behavior of the proposed MPC algorithm. In particular, we will demonstrate the stability of the closed-loop trajectory and derive near-optimal performance bounds. 
However, before investigating stability, we introduce the key assumptions underlying our analysis. 
To this end, we start with the following assumption, which states that the open-loop problem is strictly dissipative.

\begin{ass} \label{ass:Dissipativity}
    We assume that the stochastic optimal control problem~\eqref{eq:stochOCPopenloop} is strictly stochastically dissipative at $(\mathbf{X}^s,\mathbf{U}^s)$ with respect to a (pseudo)metric $d$ on $\RR{\Omega,\X}$ according to Definition~\ref{defn:stochDissi}.
\end{ass}

In addition to strict dissipativity, we also want the problem~\eqref{eq:stochOCPopenloop} to enjoy a stochastic turnpike property. For this purpose, we make the following assumption, which ensures that the conditions of Theorem~\ref{thm:Turnpike} are satisfied.

\begin{ass} \label{ass:BoundedStorage_CheapReach}
    Consider the stationary pair $(\mathbf{X}^s,\mathbf{U}^s)$ and the (pseudo)metric $d$ from Assumption~\ref{ass:Dissipativity}. 
    Then, we assume that there is $r > 0$ such that
    \begin{enumerate}[(i)]
        \item the storage function $\lambda: \N_0 \times \RR{\Omega,\X} \to \R$ from Assumption~\ref{ass:Dissipativity} is uniformly bounded on $\Bcl^d_r(\mathbf{X}^s)$.
        \item the stationary process $\mathbf{X}^s$ is finite-horizon cheaply reachable on $\Bcl^d_r(\mathbf{X}^s)$.
    \end{enumerate}
\end{ass}

Note that Assumption~\ref{ass:BoundedStorage_CheapReach}, in conjunction with Assumption~\ref{ass:Dissipativity}, guarantees the turnpike property on finite horizon.
For the purpose of our stability analysis, however, this finite-horizon property is sufficient. Moreover, as we will later demonstrate in Lemma~\ref{lem:finiteV}, the finite-horizon cheap reachability carries over to the infinite-horizon, thereby implying the turnpike property also in the infinite-horizon case.

To conclude our introductory remarks, we make the following additional continuity assumption on the optimal value function.

\begin{ass} \label{ass:ContinuityV}
    Consider the stationary pair $(\mathbf{X}^s,\mathbf{U}^s)$ and the (pseudo)metric $d$ from Assumption~\ref{ass:Dissipativity}. 
    Then, we assume that $V$ is approximately continuous at $\mathbf{X}^s$, i.e.\ there exist $\eps > 0$, $\gamma_V: \R_{\geq 0} \times \R_{\geq 0} \to \R_{\geq 0}$ with $\gamma_V(N,r) \to 0$ if $N \to \infty$ and $r \to 0$, and $\gamma_V(\cdot,r),\gamma_V(N,\cdot)$ monotonous for fixed $r$ and $N$ such that
    $$\vert V_N(\tcl,X_\tcl) - V_N(\tcl,X^s(\tcl)) \vert \leq \gamma_{V}(N,d(X_\tcl,X^s(\tcl)))$$
    holds for all $N \in \N$, $\tcl \in \N_0$ and $X_\tcl \in \Bcl_{\eps}^d(X^s(\tcl))$.
\end{ass}

Now, we will start to investigate the stability properties of our proposed stochastic MPC algorithm.
For this purpose, we introduce the following form of asymptotic stability.

\begin{defi} [Uniform P-practical asymptotic stability] \label{defn:stability}
    Let $\mathcal{Y}(\tcl) \subset \mathcal{X}$, $\tcl \in \mathbb{N}_0$ be a forward invariant family of sets, i.e.\ $f(X_\tcl,\mu_N(X_\tcl),W(\tcl)) \in \mathcal{Y}(\tcl+1)$ for all $X_\tcl \in \mathcal{Y}(\tcl)$, and let $\mathcal{P}(\tcl) \subset \mathcal{Y}(\tcl)$, $\tcl \in \mathbb{N}_0$ be subsets of $\mathcal{Y}(\tcl)$. 
    The stationary process $\mathbf{X}^s$ with $X^s(\tcl) \in \mathcal{Y}(\tcl)$ for all $\tcl \in \mathbb{N}_0$ is called P-practically uniformly asymptotically stable on $\mathcal{Y}(\tcl)$ with respect to the (pseudo)metric $d$ if there exists a comparison function $\beta \in \mathcal{K}\mathcal{L}$ such that
    \begin{equation} \label{eq:stability}
        d(X_{\mu_N}(\tcl;\tcl_0,X_{\tcl_0}),X^s(\tcl+\tcl_0)) \leq \beta(d(X,X^s(\tcl_0)),\tcl)
    \end{equation}
    is valid for all $X_{\tcl_0} \in \mathcal{Y}(\tcl_0)$ and all $\tcl_0,\tcl \in \mathbb{N}_0$ with $\tcl \geq \tcl_0$ and $X_{\mu_N}(\tcl;\tcl_0,X_{\tcl_0}) \notin \mathcal{P}(\tcl)$. 
\end{defi}

Since the decay condition \eqref{eq:stability} only has to hold outside of the set $\mathcal{P}(j)$, unlike to asymptotic stability, which requires that the system converges exactly to the stationary problem as time goes to infinity, P-practical asymptotic stability only guarantees that the state converges to a neighborhood of the stationary process.
Typically this neighborhood is chosen to be small (as in Corollary \ref{cor:semiprac}, below).
In order to establish this form of stability, one usually does not directly verify its definition, but rather uses the concept of Lyapunov functions.
A Lyapunov function in our time-varying setting is defined in the following way.

\begin{defi}[Uniform time-varying Lyapunov function] \label{defn:lyapunovFunction}
    Consider $\mathcal{S}(\tcl) \subseteq \RR{\Omega,\X}$ for $\tcl \in \mathbb{N}_0$ and define $\mathcal{S} := \{(\tcl,X_\tcl)| \tcl \in \mathbb{N}_0, X_\tcl \in \mathcal{S}(\tcl)\}$. 
    A function $V: \mathcal{S} \rightarrow \mathbb{R}_0^+$ is called a uniform time-varying Lyapunov function on $\mathcal{S}(\tcl)$ if there are $\alpha_1(\cdot), \alpha_2(\cdot) \in \mathcal{K}_{\infty}$ and $\alpha_V \in \mathcal{K}$ such that
    \begin{enumerate}[(i)]
        \item $\alpha_1(d(X_\tcl,X^s(\tcl))) \leq V(\tcl,X_\tcl) \leq \alpha_2(d(X_\tcl,X^s(\tcl)))$
        \item  $V(\tcl+1,f(X_\tcl,\mu_N(X_\tcl),W(\tcl))) \leq V(\tcl,X_\tcl) - \alpha_V(d(X_\tcl,X^s(\tcl)))$
    \end{enumerate}
    holds for all $\tcl \in \mathbb{N}_0$ and all $X_\tcl \in \mathcal{S}(\tcl)$ with $f(X_\tcl,\mu_N(X_\tcl),W(\tcl)) \in S(\tcl+1)$.
\end{defi}

The time-varying Lyapunov function from Definition~\ref{defn:lyapunovFunction} can be used to establish practical asymptotic stability as follows.

\begin{thm}[{cf.\ \cite[Theorem~2.23]{Gruene2017b}}] \label{thm:stabilityLyapunov}
    Consider forward invariant families of sets $\mathcal{Y}(\tcl)$ and $\mathcal{P}(\tcl) \subset \mathcal{Y}(\tcl)$ with $X^s(\tcl) \in P(\tcl)$ $\tcl \in \mathbb{N}_0$. 
    If there exists a uniform time-varying Lyapunov function $V$ on $\mathcal{S}(\tcl) = \mathcal{Y}(\tcl) \setminus \mathcal{P}(\tcl)$, then $\mathbf{X}^s$ is uniformly $P$-practically asymptotically stable on $\mathcal{Y}(\tcl)$.
\end{thm}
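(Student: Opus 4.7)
The plan is to combine the standard sandwiching argument for Lyapunov functions with the fact that the Lyapunov inequality only has to hold outside of $\mathcal{P}(\tcl)$, which is exactly the set where the practical stability estimate \eqref{eq:stability} is not required.

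First, I would fix an arbitrary $\tcl_0 \in \N_0$ and $X_{\tcl_0} \in \mathcal{Y}(\tcl_0)$, abbreviate $X(\tol) := X_{\mu_N}(\tol;\tcl_0,X_{\tcl_0})$, and consider only times $\tol \geq \tcl_0$ at which the trajectory has not yet entered the set $\mathcal{P}(\tol)$ of ``unimportant'' states. By the forward invariance of $\mathcal{Y}(\tcl)$ we have $X(\tol) \in \mathcal{Y}(\tol)$, so as long as $X(\tol) \notin \mathcal{P}(\tol)$ we are in $\mathcal{S}(\tol)$ and the Lyapunov inequalities (i) and (ii) from Definition~\ref{defn:lyapunovFunction} are applicable.

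Second, combining (i) with (ii), I would use $d(X(\tol),X^s(\tol)) \geq \alpha_2^{-1}(V(\tol,X(\tol)))$, which together with (ii) yields the scalar recursion
\begin{equation*}
    V(\tol+1,X(\tol+1)) \leq V(\tol,X(\tol)) - \alpha_V\bigl(\alpha_2^{-1}(V(\tol,X(\tol)))\bigr) =: g(V(\tol,X(\tol))),
\end{equation*}
where $g: \R_{\geq 0} \to \R_{\geq 0}$ satisfies $0 \leq g(r) < r$ for all $r>0$ and $g(0)=0$. A classical comparison lemma (see, e.g., \cite[Lemma~2.22]{Gruene2017b}) then produces a $\mathcal{KL}$ function $\tilde\beta$ such that the solution of the scalar recursion $v_{\tol+1} = g(v_\tol)$ satisfies $v_\tol \leq \tilde\beta(v_0,\tol - \tcl_0)$, and hence $V(\tol,X(\tol)) \leq \tilde\beta(V(\tcl_0,X_{\tcl_0}),\tol-\tcl_0)$ for all $\tol \geq \tcl_0$ as long as the trajectory stays outside $\mathcal{P}(\tol)$.

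Third, I would sandwich with (i) on both sides: using $V(\tcl_0,X_{\tcl_0}) \leq \alpha_2(d(X_{\tcl_0},X^s(\tcl_0)))$ and $d(X(\tol),X^s(\tol)) \leq \alpha_1^{-1}(V(\tol,X(\tol)))$, I obtain
\begin{equation*}
    d(X(\tol),X^s(\tol)) \leq \alpha_1^{-1}\bigl(\tilde\beta(\alpha_2(d(X_{\tcl_0},X^s(\tcl_0))),\tol-\tcl_0)\bigr) =: \beta(d(X_{\tcl_0},X^s(\tcl_0)),\tol-\tcl_0),
\end{equation*}
which is the desired estimate~\eqref{eq:stability} with $\beta \in \mathcal{KL}$, and it is uniform in $\tcl_0$ because the bounds $\alpha_1,\alpha_2,\alpha_V$ do not depend on $\tcl_0$. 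For times after the trajectory first enters $\mathcal{P}(\tol)$, no bound needs to be shown, so the proof is complete.

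The only genuinely delicate step is the construction of the $\mathcal{KL}$ function $\tilde\beta$ from the scalar recursion $v_{\tol+1} \leq g(v_\tol)$; this is not a one-line computation when $g$ is only known to lie in $[0,r)$ for $r>0$ with $g(0)=0$, but since our setting matches exactly the hypotheses of \cite[Lemma~2.22]{Gruene2017b}, I would simply invoke that lemma rather than redoing the construction, which keeps the proof concise.
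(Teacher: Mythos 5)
Your proposal is correct and follows essentially the same route as the source the paper itself relies on: the paper gives no proof of this theorem but cites \cite[Theorem~2.23]{Gruene2017b}, whose proof is exactly your sandwiching argument combined with the comparison lemma \cite[Lemma~2.22]{Gruene2017b}, with the forward invariance of $\mathcal{P}(\tcl)$ and $\mathcal{Y}(\tcl)$ covering the times after the trajectory enters $\mathcal{P}(\tcl)$ and the fact that condition (ii) of Definition~\ref{defn:lyapunovFunction} is only available while the successor stays in $\mathcal{S}(\tcl+1)$. You correctly flag the one delicate point (turning $r \mapsto r - \alpha_V(\alpha_2^{-1}(r))$ into a nonnegative, monotone comparison map before extracting $\tilde\beta \in \mathcal{KL}$) and appropriately delegate it to the cited lemma.
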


Our goal in this section is now to show that the optimal value function is a time-varying Lyapunov function for our closed-loop systems on appropriate sets.
However, the optimal value function of our original problem~\eqref{eq:stochOCPopenloop} is not suitable for this.
Instead, we will consider the following modified problem.

\begin{defi}[Modified optimal control problem] \label{defn:modifiedOCP}
    Assume that the optimal control problem \eqref{eq:stochOCPopenloop} is stochastic dissipative at $(\mathbf{X}^s,\mathbf{U}^s)$.
    Then we define the modified optimal control problem as 
    \begin{equation}
    \label{eq:stochOCPmodified}
        \begin{split}
            \minimize_{\mathbf{U} \in \UU_{ad}^{N}(\tcl,X_\tcl)} &~\tilde{J}_N(\tcl,X_\tcl,\mathbf{U}) := \sum_{\tol=0}^{N-1} \tilde{\ell}(\tol,X(\tol;\tcl,X_\tcl),U(\tol))
        \end{split}
    \end{equation}
    with $\tilde{\ell}(\tol,X,U)$ from equation~\eqref{eq:DissiIneq}.
    Moreover, for all $N \in \N$ we define the corresponding optimal value function as $\tilde{V}_N(\tcl,X_\tcl) := \inf_{\mathbf{U} \in \UU_{ad}^{N}(\tcl,X_\tcl)} \tilde{J}_N(\tcl,X_\tcl,\mathbf{U})$ and if a minimizer of \eqref{eq:stochOCPmodified} exists, we denote it by $\tilde{\mathbf{U}}^*_{N,\tcl,X_\tcl}$ (or short $\tilde{\mathbf{U}}^*_{N}$).
\end{defi}

In contrast to the original problem in Definition~\ref{defn:modifiedOCP}, we use the stage costs from equation~\eqref{eq:DissiIneq}, which are modified by the storage function $\lambda$.
Due to this, the costs $\tilde{J}_N(\tcl,X_\tcl,\mathbf{U})$ are now also time-dependent on their own and not only indirectly through the dynamics as before.
Moreover, this will also lead to different optimal trajectories, i.e., in general it holds that $\tilde{\mathbf{U}}^*_N \neq \mathbf{U}^*_N$.
However, the first question for us will now be whether, despite this, the general properties of the stochastic optimal control problem~\eqref{eq:stochOCPopenloop}, such as the turnpike property and dissipativity, are transferred to the modified problem.
While it is clear that dissipativity will hold for the modified problem, since $\tilde{\ell}(\tcl,X,U) \geq \alpha(d(X,X^s(\tcl)))$ holds without any further modification, it is not clear that this also applies to the turnpike property - in particular, since we do not know whether the modified problem is also cheaply reachable.
However, under the following additional assumption, we can show that this is also true.

\begin{ass} \label{ass:modifiedStagecost}
    For the stationary pair $(\mathbf{X}^s,\mathbf{U}^s)$ from Assumption~\ref{ass:Dissipativity} it holds that $\tilde{\ell}(\tcl,X^s(\tcl),U^s(\tcl)) = 0$ for all $\tcl \in \N_0$.
\end{ass}

Assumption~\ref{ass:modifiedStagecost} states that the modified stage costs evaluated at the stationary pair are not only bounded from below by zero, but are in fact equal to zero.
It should be noted that this is equivalent to the requirement that the storage function is constant along the optimal trajectory, i.e.\ $\lambda(\tcl,X^s(\tcl)) = \lambda(\tcl+1,X^s(\tcl+1))$ for all $\tcl \in \N_0$, since the stage costs $\ell(\mathbf{X}^s,\mathbf{U}^s)$ remain constant along the stationary pair.
While for time-invariant problems it is clear that this condition must always hold, since then $X^s(\tcl) = X^s(\tcl+1)$ holds for all $\tcl \in \mathbb{N}_0$, for time-dependent problems this does not necessarily hold by definition. 
However, for all problems for which dissipativity has been theoretically guaranteed by us so far, cf.\ \cite{Schiessl2025,Schiessl2024a}, this property holds, which is why we do not consider this assumption restrictive.
Now we are able to guarantee the turnpike property for the modified problem as shown in the next lemma.

\begin{lem} \label{lem:TurnpikeModified}
    Let Assumption~\ref{ass:Dissipativity}, Assumption~\ref{ass:BoundedStorage_CheapReach}, and Assumption~\ref{ass:modifiedStagecost} hold and consider the stationary pair $(\mathbf{X}^s,\mathbf{U}^s)$ and the (pseudo)metric $d$ from Assumption~\ref{ass:Dissipativity}.
    Then, the modified stochastic optimal control problem~\eqref{eq:stochOCPmodified} 
    has the uniform stochastic finite-horizon turnpike property locally on $\Bcl_r^d(\mathbf{X}^s)$ with respect to the (pseudo)metric $d$.
\end{lem}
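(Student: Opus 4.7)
The plan is to verify the three hypotheses of Theorem~\ref{thm:Turnpike} for the \emph{modified} problem~\eqref{eq:stochOCPmodified}, treating it as a new stochastic optimal control problem whose stage cost is $\tilde{\ell}$ and whose stationary pair is still $(\mathbf{X}^s,\mathbf{U}^s)$. If we can do this, the conclusion is immediate. The three hypotheses are: (a) strict stochastic dissipativity of the modified problem at $(\mathbf{X}^s,\mathbf{U}^s)$; (b) uniform boundedness of a corresponding storage function on $\Bcl_r^d(\mathbf{X}^s)$; and (c) finite-horizon cheap reachability of $\mathbf{X}^s$ for the modified problem on $\Bcl_r^d(\mathbf{X}^s)$.

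First I would show (a) and (b) simultaneously by picking the \emph{trivial} storage function $\tilde{\lambda} \equiv 0$ for the modified problem. Indeed, by Assumption~\ref{ass:modifiedStagecost} we have $\tilde{\ell}(\tcl,X^s(\tcl),U^s(\tcl)) = 0$ for all $\tcl \in \N_0$, and by Assumption~\ref{ass:Dissipativity} together with the definition of $\tilde{\ell}$ we have $\tilde{\ell}(\tcl,X,U) \geq \alpha(d(X,X^s(\tcl)))$. Consequently the dissipation inequality with the choice $\tilde{\lambda}\equiv 0$ reads
\begin{equation*}
\tilde{\ell}(\tcl,X,U) - \tilde{\ell}(\mathbf{X}^s,\mathbf{U}^s) + \tilde{\lambda}(\tcl,X) - \tilde{\lambda}(\tcl+1,f(X,U,W(\tcl))) = \tilde{\ell}(\tcl,X,U) \geq \alpha(d(X,X^s(\tcl))),
\end{equation*}
which establishes strict dissipativity of the modified problem with $\tilde{\lambda}\equiv 0$; and $\tilde{\lambda}\equiv 0$ is of course uniformly bounded on $\Bcl_r^d(\mathbf{X}^s)$.

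The main step is verifying (c). Here I would exploit the standard telescoping identity that links the two cost functionals. For any admissible control $\mathbf{U} \in \UU_{ad}^N(\tcl,X_\tcl)$, summing the identity defining $\tilde{\ell}$ along the trajectory yields
\begin{equation*}
\tilde{J}_N(\tcl,X_\tcl,\mathbf{U}) = J_N(X_\tcl,\mathbf{U}) - N\ell(\mathbf{X}^s,\mathbf{U}^s) + \lambda(\tcl,X_\tcl) - \lambda(\tcl+N, X_{\mathbf{U}}(N;\tcl,X_\tcl)),
\end{equation*}
so taking the infimum over $\mathbf{U}$ (or, equivalently, plugging in the optimizer of $V_N$) gives
\begin{equation*}
\tilde{V}_N(\tcl,X_\tcl) \leq V_N(\tcl,X_\tcl) - N\ell(\mathbf{X}^s,\mathbf{U}^s) + \lambda(\tcl,X_\tcl) + C^l_\lambda,
\end{equation*}
using the uniform lower bound $\lambda \geq -C^l_\lambda$ from Assumption~\ref{ass:Dissipativity}. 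For $(\tcl,X_\tcl) \in \Bcl_r^d(\mathbf{X}^s)$, the first two terms on the right are bounded by $C_V$ by Assumption~\ref{ass:BoundedStorage_CheapReach}(ii), while $\lambda(\tcl,X_\tcl)$ is bounded by $C_\lambda$ by Assumption~\ref{ass:BoundedStorage_CheapReach}(i). Thus $\tilde{V}_N(\tcl,X_\tcl) - N \cdot \tilde{\ell}(\mathbf{X}^s,\mathbf{U}^s) = \tilde{V}_N(\tcl,X_\tcl) \leq C_V + C_\lambda + C^l_\lambda$, which is precisely finite-horizon cheap reachability of $\mathbf{X}^s$ for the modified problem on $\Bcl_r^d(\mathbf{X}^s)$ in the sense of Definition~\ref{defn:cheapReachability}.

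With (a), (b), and (c) in hand, Theorem~\ref{thm:Turnpike} applied to the modified problem delivers the uniform stochastic finite-horizon turnpike property locally on $\Bcl_r^d(\mathbf{X}^s)$ with respect to $d$, which is the claim. The only genuinely delicate point is (c): one must resist trying to bound $\tilde{V}_N$ directly and instead observe that the telescoping trick makes $\lambda$ appear only at the two endpoints, so that the uniform bound on $\lambda$ at the initial point together with its lower bound at the terminal point is enough. The remaining verifications are mere bookkeeping.
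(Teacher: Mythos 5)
Your proposal is correct and follows essentially the same route as the paper: the paper likewise bounds $\tilde{V}_N(\tcl,X_\tcl) \leq C_\lambda + C_V + C^l_\lambda$ by evaluating $\tilde{J}_N$ at the optimizer of the original problem and using the endpoint bounds on $\lambda$, then observes that the modified problem is strictly dissipative with the trivial storage function $\lambda \equiv 0$ (via Assumption~\ref{ass:modifiedStagecost}) and reruns the argument of Theorem~\ref{thm:Turnpike}. The only cosmetic slip is the phrase ``taking the infimum over $\mathbf{U}$ (or, equivalently, plugging in the optimizer of $V_N$)'' --- only the latter is valid, and it is what your displayed inequality actually uses.
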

\begin{proof}
    Consider $\tcl \in \N_0$, $X_\tcl \in \Bcl_r^d(X^s(\tcl))$ and let $\mathbf{U}^*_N$ be the optimal solution of the stochastic optimal control problem~\eqref{eq:stochOCPopenloop} with initial time $\tcl$ and state $X_\tcl$. 
    Then, by optimality it follows that
    \begin{equation*}
    \begin{split}
        \tilde{V}_N(\tcl,X_\tcl) \leq& \tilde{J}_N(\tcl,X_\tcl,\mathbf{U}_N^*) \\
        =& \lambda(\tcl,X_\tcl) + J_N(X_\tcl,\mathbf{U}_N^*) - N \ell(\mathbf{X}^s,\mathbf{U}^s) - \lambda(\tcl+N,X_{\mathbf{U}^*_N}(N;\tcl,X_\tcl)).
    \end{split}
    \end{equation*}
    Using Assumption~\ref{ass:BoundedStorage_CheapReach}(ii) we get
    \begin{equation*}
       J_N(X_\tcl,\mathbf{U}_N^*) - N \ell(\mathbf{X}^s,\mathbf{U}^s) = V_N(\tcl,X_\tcl) - N \ell(\mathbf{X}^s,\mathbf{U}^s) \leq C_V.
    \end{equation*}
    Furthermore, by Assumption~\ref{ass:BoundedStorage_CheapReach}(i) we know that $\lambda(\tcl,X_\tcl) < C_{\lambda}$ for some $C_{\lambda} > 0$ and due to the definition of dissipativity we obtain that there is a $C^l_{\lambda} > 0$ such that $\lambda(\tcl,X) > - C^l_{\lambda}$ for all $\tcl \in \N_0$ and $X \in \RR{\Omega,\X}$.
    Hence, we get
    \begin{equation*}
        \tilde{V}_N(\tcl,X_\tcl) \leq \tilde{C}_V := C_{\lambda} + C_V + C^l_{\lambda}
    \end{equation*}
    for all $\tcl \in \N_0$ and $X_\tcl \in \Bcl_r^d(X^s(\tcl))$.
    Thus, the proof follows analogously to the one of Lemma~\ref{thm:Turnpike} with
    \begin{equation} \label{eq:varthetaModified}
        \vartheta(L) := \alpha^{-1}\left( \dfrac{\tilde{C}_V}{L} \right) = \alpha^{-1}\left( \dfrac{C_{\lambda} + C_V + C^l_{\lambda}}{L} \right) \in \mathcal{L}
    \end{equation}
    since $\tilde{\ell}(\tcl,X^s(\tcl),U^s(\tcl)) = 0$ for all $\tcl \in \N_0$ due to Assumption~\ref{ass:modifiedStagecost} and the fact that the modified problem~\eqref{eq:stochOCPmodified} is always dissipative with storage function $\lambda \equiv 0$.
\end{proof}

Note that the $\mathcal{L}$-function $\vartheta$ from equation~\eqref{eq:varthetaModified} for the turnpike property of the modified problem and the one from equation~\eqref{eq:vartheta} for the original problem are the same.
Thus, we do not have to distinguish between two different functions $\vartheta$ and $\tilde{\vartheta}$ in the following when we use turnpike properties in our estimates.
However, it should be noted that, in contrast to this, the proofs do not show that the sets of exceptional points $\mathcal{Q}(\tcl,X_\tcl,L,N)$ are the same for the two problems.

Before proceeding, similar to Definition~\ref{ass:ContinuityV}, we impose local continuity assumptions on the modified problem, as stated below.
Note that in contrast to Assumption~\ref{ass:ContinuityV}, the continuity of the modified value function must be satisfied uniformly in $N$.

\begin{ass} \label{ass:ContinuityModified}
    Consider the stationary pair $(\mathbf{X}^s,\mathbf{U}^s)$ and the (pseudo)me\-tric $d$ from Assumption~\ref{ass:Dissipativity}. Then, there exist $\tilde{r} \geq \tilde{\eps} > 0$ and $\gamma_{\lambda},\gamma_{\tilde{V}} \in \K_{\infty}$ such that for all  $N \in \N$, $\tcl \in \N_0$ it holds that 
    \begin{enumerate}[(i)]
        \item $\vert \lambda(\tcl,X_\tcl) - \lambda(\tcl,X^s(\tcl)) \vert \leq \gamma_{\lambda}(d(X_\tcl,X^s(\tcl)))$ for all $X_\tcl \in \Bcl_{\tilde{\eps}}^d(X^s(\tcl))$.
        \item $\vert \tilde{V}_N(\tcl,X_\tcl) - \tilde{V}_N(\tcl,X^s(\tcl)) \vert \leq \gamma_{\tilde{V}}(d(X_\tcl,X^s(\tcl)))$ for all $X_\tcl \in \Bcl_{\tilde{r}}^d(X^s(\tcl))$.
    \end{enumerate}
\end{ass}

We now make a series of estimates necessary to show the practical asymptotic stability of our closed-loop system, starting with the following lemma, which compares the performance up to time $M$ of optimal trajectories on horizons $N$ and $N+1$ of the modified problem.
Note that in the following we have moved all proofs of preparatory lemmas, which can be proved analogously to the deterministic case with slight modifications, to the appendix.

\begin{lem} \label{lem:lem1}
    Let the Assumptions~\ref{ass:Dissipativity}, \ref{ass:BoundedStorage_CheapReach}, and \ref{ass:ContinuityModified}(ii) hold. Consider $\vartheta \in \mathcal{L}$ from \eqref{eq:vartheta} and $\tilde{r}$ from Assumption~\ref{ass:ContinuityModified}.
    Then, for all $\tcl \in \mathbb{N}_0$, $N,L \in \mathbb{N}$ with $L \geq \vartheta^{-1}(\tilde{r})$,
    and all $X_\tcl \in \Bcl^d_{r}(X^s(\tcl))$ there exists a set $\bar{\mathcal{Q}}(\tcl,X_\tcl,L,N)$ with $\# \bar{\mathcal{Q}}(\tcl,X_\tcl,L,N) \leq 2L$ such that for all  $M \in \{0,\ldots,N\} \setminus \bar{\mathcal{Q}}(\tcl,X_\tcl,L,N)$ the estimate
    \begin{equation} \label{eq:lemma1}
        \tilde{J}_M(\tcl,X_\tcl,\tilde{\mathbf{U}}_N^*) = \tilde{J}_M(\tcl,X_\tcl,\tilde{\mathbf{U}}_{N+1}^*) + R_1(\tcl,X_\tcl,M,N)
    \end{equation}
    holds with $\vert R_1(\tcl,X_\tcl,M,N) \vert \leq 2 \gamma_{\tilde{V}}(\vartheta(L))$.
\end{lem}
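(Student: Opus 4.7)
The plan is to combine the turnpike property of the modified problem (Lemma~\ref{lem:TurnpikeModified}) with the dynamic programming principle (Theorem~\ref{thm:FiniteDPP}, applied to the modified cost $\tilde\ell$) and the local continuity of $\tilde V$ from Assumption~\ref{ass:ContinuityModified}(ii). The conceptual picture is that on any ``turnpike time'' $M$ both the horizon-$N$ and horizon-$(N+1)$ optimizers have driven the state very close to $X^s(\tcl+M)$, so the tails $\tilde V_{N-M}(\tcl+M,\cdot)$ and $\tilde V_{N+1-M}(\tcl+M,\cdot)$ evaluated at the two trajectories differ only by a quantity controlled by $\gamma_{\tilde V}$.

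First, I would apply Lemma~\ref{lem:TurnpikeModified} to the optimal trajectories on horizons $N$ and $N+1$. This produces exceptional sets $\mathcal{Q}(\tcl,X_\tcl,L,N)$ and $\mathcal{Q}(\tcl,X_\tcl,L,N+1)$, each of cardinality at most $L$, outside of which the corresponding optimal trajectory stays within distance $\vartheta(L)$ of $\mathbf{X}^s$. Setting $\bar{\mathcal{Q}}(\tcl,X_\tcl,L,N):=\mathcal{Q}(\tcl,X_\tcl,L,N)\cup\mathcal{Q}(\tcl,X_\tcl,L,N+1)$ yields $\#\bar{\mathcal{Q}}\leq 2L$, and for any $M\in\{0,\ldots,N\}\setminus\bar{\mathcal{Q}}$ both states $X_{\tilde{\mathbf{U}}_N^*}(M;\tcl,X_\tcl)$ and $X_{\tilde{\mathbf{U}}_{N+1}^*}(M;\tcl,X_\tcl)$ lie in $\Bcl_{\vartheta(L)}^d(X^s(\tcl+M))\subseteq \Bcl_{\tilde r}^d(X^s(\tcl+M))$, the inclusion being ensured by the hypothesis $L\geq \vartheta^{-1}(\tilde r)$.

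Next, for such $M$ I would invoke the finite-horizon DPP for the modified problem on both horizons,
\begin{align*}
\tilde V_N(\tcl,X_\tcl) &= \tilde J_M(\tcl,X_\tcl,\tilde{\mathbf{U}}_N^*) + \tilde V_{N-M}\bigl(\tcl+M,X_{\tilde{\mathbf{U}}_N^*}(M;\tcl,X_\tcl)\bigr),\\
\tilde V_{N+1}(\tcl,X_\tcl) &= \tilde J_M(\tcl,X_\tcl,\tilde{\mathbf{U}}_{N+1}^*) + \tilde V_{N+1-M}\bigl(\tcl+M,X_{\tilde{\mathbf{U}}_{N+1}^*}(M;\tcl,X_\tcl)\bigr),
\end{align*}
and then combine each identity with a tail-exchange sub-optimality estimate. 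Concretely, concatenating the first $M$ pieces of $\tilde{\mathbf{U}}_{N+1}^*$ with an optimal tail of length $N-M$ for the state reached at time $M$ gives an admissible horizon-$N$ control, hence
\[
\tilde V_N(\tcl,X_\tcl) \leq \tilde J_M(\tcl,X_\tcl,\tilde{\mathbf{U}}_{N+1}^*) + \tilde V_{N-M}\bigl(\tcl+M,X_{\tilde{\mathbf{U}}_{N+1}^*}(M;\tcl,X_\tcl)\bigr).
\]
Subtracting this inequality from the DPP identity for horizon $N$ yields
\[
\tilde J_M(\tcl,X_\tcl,\tilde{\mathbf{U}}_N^*)-\tilde J_M(\tcl,X_\tcl,\tilde{\mathbf{U}}_{N+1}^*)\leq \tilde V_{N-M}\bigl(\tcl+M,X_{\tilde{\mathbf{U}}_{N+1}^*}(M)\bigr)-\tilde V_{N-M}\bigl(\tcl+M,X_{\tilde{\mathbf{U}}_N^*}(M)\bigr),
\]
and Assumption~\ref{ass:ContinuityModified}(ii), combined with the triangle inequality through $X^s(\tcl+M)$, bounds the right-hand side by $2\gamma_{\tilde V}(\vartheta(L))$, since both arguments lie in $\Bcl_{\tilde r}^d(X^s(\tcl+M))$ by the first step. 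The symmetric argument (swap the roles of $N$ and $N+1$, using horizon $N+1-M$ for the tail) gives the reverse inequality with the same bound, which together establish $|R_1(\tcl,X_\tcl,M,N)|\leq 2\gamma_{\tilde V}(\vartheta(L))$ and hence \eqref{eq:lemma1}.

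The main obstacle I expect is justifying the tail-concatenation in the stochastic framework: one needs to verify that gluing the first $M$ pieces of $\tilde{\mathbf{U}}_{N+1}^*$ to an optimal tail for the random state $X_{\tilde{\mathbf{U}}_{N+1}^*}(M;\tcl,X_\tcl)$ yields a control that still satisfies the filtration condition $\sigma(U(\tol))\subseteq\sigma(X(\tol;\tcl,X_\tcl))$ and the pathwise constraint $U(\omega,\tol)\in\UU$. In a deterministic context this is immediate; here it relies on the Markov/measurable-selection structure provided by Theorem~\ref{thm:PointwiseDPP} together with Lemma~\ref{lem:measurable}, which together guarantee that the tail optimizer can be realized as a feedback of the current state and therefore is adapted to the natural filtration generated by the concatenated trajectory.
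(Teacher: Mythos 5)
Your proposal follows essentially the same route as the paper's proof: turnpike sets for both horizons are united into $\bar{\mathcal{Q}}$ with at most $2L$ elements, the DPP identity is combined with the tail-exchange suboptimality inequality in both directions, and the resulting value-function differences are split through $X^s(\tcl+M)$ and bounded by $\gamma_{\tilde V}(\vartheta(L))$ each via Assumption~\ref{ass:ContinuityModified}(ii). Your closing remark on the admissibility of the concatenated control is a point the paper leaves implicit (it simply invokes optimality and the stochastic DPP), but it does not change the argument's structure.
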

\begin{proof}
The proof follows exactly the same steps as in the time-varying deterministic case, see \cite[Lemma~1]{Gruene2019}. It is given in  Appendix~\ref{app}.
\end{proof}

Using the above results we can now show that the optimal value function for the modified problem attains nearly identical values for horizons $N$ and $N+1$.

\begin{lem} \label{lem:lem2}
    Let the Assumptions \ref{ass:Dissipativity}, \ref{ass:BoundedStorage_CheapReach}, \ref{ass:modifiedStagecost}, and \ref{ass:ContinuityModified}(ii) hold.
    Consider $\vartheta \in \mathcal{L}$ from \eqref{eq:vartheta}, $\tilde{r}$ from Assumption~\ref{ass:ContinuityModified}, and $\bar{\mathcal{Q}}(\tcl,X_\tcl,L,N)$ from Lemma~\ref{lem:lem1}.
    Then, for all $\tcl \in \mathbb{N}_0$, $N,L \in \mathbb{N}$ with $L \geq \vartheta^{-1}(\tilde{r})$, all $X_\tcl \in \Bcl^d_{r}(X^s(\tcl))$ and all $M \in \{0,\ldots,N\} \setminus \bar{\mathcal{Q}}(\tcl,X_\tcl,L,N)$ the estimate
    \begin{equation} \label{eq:lem2} 
        \tilde{V}_{N+1}(\tcl,X_\tcl) = \tilde{V}_{N}(\tcl,X_\tcl) + R_2(\tcl,X_\tcl,M,N)
    \end{equation}
    holds with $\vert R_2(\tcl,X_\tcl,M,N) \vert \leq 4 \gamma_{\tilde{V}}(\vartheta(L))$ and $\mathcal{M}(\tcl,X_\tcl,L,N)$ from Lemma~\ref{lem:lem1}.
\end{lem}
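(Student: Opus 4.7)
The plan is to combine the dynamic programming principle with Lemma~\ref{lem:lem1} and the turnpike property of the modified problem from Lemma~\ref{lem:TurnpikeModified}. Applying the finite-horizon DPP (Theorem~\ref{thm:FiniteDPP}) to the modified problem for both horizons $N$ and $N+1$, I would write
\begin{align*}
\tilde V_{N+1}(\tcl,X_\tcl) &= \tilde J_M(\tcl,X_\tcl,\tilde{\mathbf{U}}_{N+1}^*) + \tilde V_{N+1-M}\bigl(\tcl+M,X_{\tilde{\mathbf{U}}_{N+1}^*}(M;\tcl,X_\tcl)\bigr),\\
\tilde V_{N}(\tcl,X_\tcl) &= \tilde J_M(\tcl,X_\tcl,\tilde{\mathbf{U}}_{N}^*) + \tilde V_{N-M}\bigl(\tcl+M,X_{\tilde{\mathbf{U}}_{N}^*}(M;\tcl,X_\tcl)\bigr),
\end{align*}
and subtract. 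The first pair of summands gives the quantity controlled by Lemma~\ref{lem:lem1}, bounded by $2\gamma_{\tilde V}(\vartheta(L))$ for $M \notin \bar{\mathcal Q}(\tcl,X_\tcl,L,N)$.

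The key step that does the real work is the treatment of the tail difference $\tilde V_{N+1-M}(\tcl+M,\cdot)-\tilde V_{N-M}(\tcl+M,\cdot)$ at two \emph{different} horizons. Direct continuity is not available, so I would pass through the stationary state $X^s(\tcl+M)$. The crucial observation is that, under Assumption~\ref{ass:modifiedStagecost}, the stationary control sequence $\mathbf{U}^s$ produces a modified stage cost that vanishes identically, hence yields an upper bound $\tilde V_K(\tcl+M,X^s(\tcl+M))\le 0$ for every $K\in\N$; on the other hand, $\tilde\ell\ge 0$ by strict dissipativity (applied with $\alpha\in\K_\infty$), so $\tilde V_K \ge 0$. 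Therefore $\tilde V_K(\tcl+M,X^s(\tcl+M)) = 0$ for all $K$, which eliminates the horizon dependence at the stationary state.

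With this pivot in hand, the two tails can each be compared to $0$ using Assumption~\ref{ass:ContinuityModified}(ii), provided the corresponding endpoint lies in $\bar B_{\tilde r}^d(X^s(\tcl+M))$. Since $M \notin \bar{\mathcal Q}(\tcl,X_\tcl,L,N)$ and $L\ge \vartheta^{-1}(\tilde r)$, Lemma~\ref{lem:TurnpikeModified} gives
\[
d\bigl(X_{\tilde{\mathbf{U}}_{N}^*}(M;\tcl,X_\tcl),X^s(\tcl+M)\bigr)\le \vartheta(L)\le \tilde r,\qquad d\bigl(X_{\tilde{\mathbf{U}}_{N+1}^*}(M;\tcl,X_\tcl),X^s(\tcl+M)\bigr)\le \vartheta(L)\le \tilde r,
\]
so each tail contributes at most $\gamma_{\tilde V}(\vartheta(L))$ in absolute value, for a total of $2\gamma_{\tilde V}(\vartheta(L))$ on the tail piece.

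Combining the $2\gamma_{\tilde V}(\vartheta(L))$ bound from Lemma~\ref{lem:lem1} on the $[0,M]$-piece with the $2\gamma_{\tilde V}(\vartheta(L))$ bound on the tail piece yields $|R_2(\tcl,X_\tcl,M,N)|\le 4\gamma_{\tilde V}(\vartheta(L))$, which is the claim. The main obstacle is precisely the horizon mismatch in the tail: without the identity $\tilde V_K(\tcl+M,X^s(\tcl+M))=0$ (which is exactly where Assumption~\ref{ass:modifiedStagecost} enters, in contrast to Lemma~\ref{lem:lem1} where it was not needed) one could not reduce both tails to the same reference value using only the continuity supplied by Assumption~\ref{ass:ContinuityModified}(ii).
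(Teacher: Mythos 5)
Your proposal is correct and follows essentially the same route as the paper's proof: both apply the DPP at horizons $N$ and $N{+}1$, control the $\tilde J_M$-difference via Lemma~\ref{lem:lem1} (contributing $2\gamma_{\tilde V}(\vartheta(L))$), and resolve the horizon mismatch in the tails by pivoting through $\tilde V_K(\tcl+M,X^s(\tcl+M))=0$ --- which you justify correctly from Assumption~\ref{ass:modifiedStagecost} together with $\tilde\ell\ge 0$ --- before invoking the turnpike bound and Assumption~\ref{ass:ContinuityModified}(ii) for the remaining $2\gamma_{\tilde V}(\vartheta(L))$. The bookkeeping and the final bound $4\gamma_{\tilde V}(\vartheta(L))$ match the paper exactly.
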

\begin{proof}
    The proof follows exactly the same steps as in the time-varying deterministic case, see \cite[Lemma~2]{Gruene2019}. It is given in  Appendix~\ref{app}.
\end{proof}

The next lemma considers an initial piece of an optimal trajectory. If this initial piece ends near the turnpike, i.e.\ near the stationary stochastic process $\mathbf{X}^s$, then it has a cost approximately lower than all other trajectories ending there. 

\begin{lem}
\label{lem:theorem1}
    Let Assumptions~\ref{ass:Dissipativity}, \ref{ass:BoundedStorage_CheapReach}, and \ref{ass:ContinuityV} hold. 
    Consider $\vartheta \in \mathcal{L}$ from \eqref{eq:vartheta} and $\eps$ from Assumption~\ref{ass:ContinuityV}.
    Then, for all $\tcl \in \mathbb{N}_0$, $N,L \in \mathbb{N}$ with $L \geq \vartheta^{-1}(\eps)$, and all $X_\tcl \in \Bcl^d_{r}(X^s(\tcl))$ there exists a set $\mathcal{Q}(\tcl,X_\tcl,L,N)$ with $\# \mathcal{Q}(\tcl,X_\tcl,L,N) \leq L$ such that for all $M \in \{0,\ldots,N\} \setminus \mathcal{Q}(\tcl,X_\tcl,L,N)$, all $\mathbf{U} \in \UU_{ad}^M(\tcl,X_\tcl)$ with $d(X_{\mathbf{U}}(M;\tcl,X_\tcl),X^s(\tcl+M)) \leq \vartheta(L)$, the estimate
    \begin{equation} \label{eq:thm1}
        J_M(\tcl,X_\tcl,\mathbf{U}_{N}^*) \leq J_M(\tcl,X_\tcl,\mathbf{U}) + R_3(\tcl,X_\tcl,M,N)
    \end{equation}
    holds with $\vert R_3(\tcl,X_\tcl,M,N) \vert \leq 2 \gamma_{V}(N-M,\vartheta(L))$.
\end{lem}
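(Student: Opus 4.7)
The strategy is classical: apply the dynamic programming principle to the optimal trajectory on horizon $N$ and compare it with a suboptimal ``prefix-then-tail'' control that copies $\mathbf{U}$ on $\{0,\ldots,M-1\}$ and is continued by the optimal $N-M$-horizon control starting from the endpoint $X_{\mathbf{U}}(M;\tcl,X_\tcl)$. Concretely, I would first invoke Theorem~\ref{thm:FiniteDPP} for $\mathbf{U}_N^*$ to write
\[
    V_N(\tcl,X_\tcl) = J_M(\tcl,X_\tcl,\mathbf{U}_N^*) + V_{N-M}\bigl(\tcl+M,X_{\mathbf{U}_N^*}(M;\tcl,X_\tcl)\bigr),
\]
and then use optimality of $V_N$ against the constructed prefix-then-tail control to obtain the opposite-sided bound
\[
    V_N(\tcl,X_\tcl) \leq J_M(\tcl,X_\tcl,\mathbf{U}) + V_{N-M}\bigl(\tcl+M,X_{\mathbf{U}}(M;\tcl,X_\tcl)\bigr).
\]
Subtracting the identity from the inequality yields
\[
    J_M(\tcl,X_\tcl,\mathbf{U}_N^*) \leq J_M(\tcl,X_\tcl,\mathbf{U}) + R_3(\tcl,X_\tcl,M,N)
\]
with the remainder
\[
    R_3(\tcl,X_\tcl,M,N) := V_{N-M}\bigl(\tcl+M,X_{\mathbf{U}}(M;\tcl,X_\tcl)\bigr) - V_{N-M}\bigl(\tcl+M,X_{\mathbf{U}_N^*}(M;\tcl,X_\tcl)\bigr).
\]

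The second step is to show that $|R_3|\leq 2\gamma_V(N-M,\vartheta(L))$ for most $M$. I would take $\mathcal{Q}(\tcl,X_\tcl,L,N)$ to be precisely the exceptional set for the finite-horizon turnpike property of the original problem, which exists with $\#\mathcal{Q}\leq L$ by Theorem~\ref{thm:Turnpike} (applicable due to Assumptions~\ref{ass:Dissipativity} and \ref{ass:BoundedStorage_CheapReach}). For $M\in\{0,\ldots,N\}\setminus\mathcal{Q}$ this yields $d(X_{\mathbf{U}_N^*}(M;\tcl,X_\tcl),X^s(\tcl+M))\leq\vartheta(L)$, and by hypothesis $d(X_{\mathbf{U}}(M;\tcl,X_\tcl),X^s(\tcl+M))\leq\vartheta(L)$ as well. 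The choice $L\geq\vartheta^{-1}(\eps)$ then forces $\vartheta(L)\leq\eps$, so Assumption~\ref{ass:ContinuityV} can be applied to both $V_{N-M}(\tcl+M,\cdot)$ terms, each bounded by $\gamma_V(N-M,\vartheta(L))$ against the common reference $V_{N-M}(\tcl+M,X^s(\tcl+M))$. A triangle inequality then produces the claimed factor of $2$.

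The main technical subtlety I expect concerns the construction of the prefix-then-tail control in the first step: one must verify that concatenating $\mathbf{U}$ on $\{0,\ldots,M-1\}$ with an optimal control sequence for the tail problem starting from the (random) state $X_{\mathbf{U}}(M;\tcl,X_\tcl)$ yields an element of $\UU_{ad}^N(\tcl,X_\tcl)$, i.e.\ that the measurability/filtration condition is preserved across the splice point. This follows from Theorem~\ref{thm:PointwiseDPP} together with Lemma~\ref{lem:measurable}, which provide a (universally, and then $\mathbb{Q}$-a.s.\ measurable) feedback realization of the tail optimizer; composing this feedback with the dynamics starting from the $\sigma(X_{\mathbf{U}}(M;\tcl,X_\tcl))$-measurable state keeps everything adapted to $(\F_\tol)$. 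A minor point is that if the infimum defining $V_{N-M}$ is not attained, one works with an $\varepsilon$-optimal tail control and lets $\varepsilon\to 0$; this does not affect the final bound on $R_3$.
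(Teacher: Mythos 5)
Your proposal is correct and follows essentially the same route as the paper's proof: the paper phrases the first step as a contradiction argument, but its remainder $R_3=\bar R_1+\bar R_2$ telescopes to exactly your $V_{N-M}(\tcl+M,X_{\mathbf{U}}(M;\tcl,X_\tcl))-V_{N-M}(\tcl+M,X_{\mathbf{U}_N^*}(M;\tcl,X_\tcl))$, and the bound on $|R_3|$ via the turnpike exceptional set of $\mathbf{U}_N^*$ together with Assumption~\ref{ass:ContinuityV} applied twice around $X^s(\tcl+M)$ is identical. Your added discussion of the admissibility of the spliced control is a point the paper leaves implicit, but it does not change the argument.
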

\begin{proof}
    The proof follows exactly the same steps as in the time-varying deterministic case, see \cite[Lemma~3]{Gruene2019}. It is given in  Appendix~\ref{app}.
\end{proof}

In our final preliminary lemma of the stability analysis, we consider a control sequence $\hat{\mathbf{U}}$ that for the first part consists of the optimal control sequence $\mathbf{U}^*_N$ of the unmodified problem until it is close to the stationary state $\mathbf{X}^s$. 
Then, from the final point, we use the optimal control sequence of the modified problem. 
The lemma states that the resulting composite control sequence has almost the same cost as if we had used the optimal control sequence of the modified problem over the entire horizon.

\begin{lem} \label{lem:lemma3}
    Let Assumptions~\ref{ass:Dissipativity}, \ref{ass:BoundedStorage_CheapReach}, \ref{ass:ContinuityV}, \ref{ass:modifiedStagecost}, and \ref{ass:ContinuityModified}  hold.
    Consider $\vartheta \in \mathcal{L}$ from \eqref{eq:vartheta}, $\eps$ from Assumption~\ref{ass:ContinuityV}, and $\tilde{\eps}$ from Assumption~\ref{ass:ContinuityModified}.
    Let $N,L \in \N$ with $L \geq \vartheta^{-1}(\min \{\eps,\tilde{\eps}\})$, and $\bar{\mathbf{U}}$ be the solution of 
    \begin{equation*}
        \min_{\mathbf{U} \in \UU_{ad}^{N-M}(\tcl+M,\bar{X}_M)}{\tilde{J}_{N-M}(\tcl+M,\bar{X}_M,\mathbf{U})}
    \end{equation*}
    with $\bar{X}_M := X_{\mathbf{U}_N^*}(M;\tcl,X_\tcl)$. \\
    Then, there exists a set $\hat{\mathcal{Q}}(\tcl,X_\tcl,L,N)$ with $\# \hat{\mathcal{Q}}(\tcl,X_\tcl,L,N) \leq 2L$ such that for all $M \in \{0,\ldots,N\} \setminus \hat{\mathcal{Q}}(\tcl,X_\tcl,L,N)$ the composite control sequence $\hat{\mathbf{U}} \in \UU_{ad}^{N}(\tcl,X_\tcl)$ defined by $\hat{U}(\tcl) := U^*_{N}(\tcl)$ for $\tcl = {0,...,M-1}$ and $\hat{U}(\tcl+M) := \bar{U}(\tcl)$ for $\tcl = {0,...,N-M}$ satisfies 
    \begin{equation*}
        \tilde{J}_N(\tcl,X_\tcl,\hat{\mathbf{U}}) = \tilde{V}_N(\tcl,X_\tcl) + R_4(\tcl,X_\tcl,M,N)
    \end{equation*}
    with  
    $|R_4(\tcl,X_\tcl,M,N)| \leq 2\gamma_{\tilde{V}}(\vartheta(L)) + 2\gamma_{\lambda}(\vartheta(L))  + 2\gamma_V(N-M,\vartheta(L))$.
\end{lem}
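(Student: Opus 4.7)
The plan is to decompose the modified cost along the composite control using the dynamic programming principle for the modified problem, and then bound the resulting head and tail errors separately. By construction of $\hat{\mathbf{U}}$ and by optimality of $\bar{\mathbf{U}}$ on the tail,
\[
    \tilde{J}_N(\tcl,X_\tcl,\hat{\mathbf{U}}) = \tilde{J}_M(\tcl,X_\tcl,\mathbf{U}^*_N) + \tilde{V}_{N-M}(\tcl+M,\bar{X}_M),
\]
while Theorem~\ref{thm:FiniteDPP} applied to the modified problem with optimal control $\tilde{\mathbf{U}}^*_N$ yields
\[
    \tilde{V}_N(\tcl,X_\tcl) = \tilde{J}_M(\tcl,X_\tcl,\tilde{\mathbf{U}}^*_N) + \tilde{V}_{N-M}(\tcl+M,X_{\tilde{\mathbf{U}}^*_N}(M;\tcl,X_\tcl)).
\]
Subtracting produces a \emph{head} error $\tilde{J}_M(\mathbf{U}^*_N)-\tilde{J}_M(\tilde{\mathbf{U}}^*_N)$ and a \emph{tail} error $\tilde{V}_{N-M}(\bar{X}_M)-\tilde{V}_{N-M}(X_{\tilde{\mathbf{U}}^*_N}(M))$, which I will treat separately.

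For the tail piece, the turnpike property of the unmodified problem (Theorem~\ref{thm:Turnpike}) gives $d(\bar{X}_M,X^s(\tcl+M))\leq\vartheta(L)$ and Lemma~\ref{lem:TurnpikeModified} gives $d(X_{\tilde{\mathbf{U}}^*_N}(M;\tcl,X_\tcl),X^s(\tcl+M))\leq\vartheta(L)$, as long as $M$ lies outside the union of the respective exceptional sets, which has at most $2L$ elements. Together with Assumption~\ref{ass:ContinuityModified}(ii) and the triangle inequality through $X^s(\tcl+M)$, this bounds the tail error by $2\gamma_{\tilde{V}}(\vartheta(L))$.

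For the head piece, I would rewrite the modified finite-horizon cost through the telescoping identity
\[
    \tilde{J}_M(\tcl,X_\tcl,\mathbf{U}) = J_M(X_\tcl,\mathbf{U}) - M\,\ell(\mathbf{X}^s,\mathbf{U}^s) + \lambda(\tcl,X_\tcl) - \lambda(\tcl+M,X_{\mathbf{U}}(M;\tcl,X_\tcl)),
\]
obtained by summing the definition of $\tilde{\ell}$ in~\eqref{eq:DissiIneq}. The constant and the $\lambda(\tcl,X_\tcl)$ terms cancel in the difference, leaving
\[
    \tilde{J}_M(\mathbf{U}^*_N)-\tilde{J}_M(\tilde{\mathbf{U}}^*_N) = \bigl(J_M(\mathbf{U}^*_N)-J_M(\tilde{\mathbf{U}}^*_N)\bigr) + \bigl(\lambda(\tcl+M,X_{\tilde{\mathbf{U}}^*_N}(M))-\lambda(\tcl+M,\bar{X}_M)\bigr).
\]
Because $\tilde{\mathbf{U}}^*_N$ restricted to $\{0,\dots,M\}$ ends within $\vartheta(L)$ of the turnpike by Lemma~\ref{lem:TurnpikeModified}, it is an admissible competitor in Lemma~\ref{lem:theorem1}, which yields $J_M(\mathbf{U}^*_N)\leq J_M(\tilde{\mathbf{U}}^*_N)+2\gamma_V(N-M,\vartheta(L))$. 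The remaining $\lambda$ difference is bounded by $2\gamma_\lambda(\vartheta(L))$ via Assumption~\ref{ass:ContinuityModified}(i), again applied through $X^s(\tcl+M)$.

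Combining the head and tail bounds with the optimality estimate $\tilde{V}_N(\tcl,X_\tcl)\leq\tilde{J}_N(\tcl,X_\tcl,\hat{\mathbf{U}})$ gives exactly the claimed bound on $|R_4|$, and $\hat{\mathcal{Q}}(\tcl,X_\tcl,L,N)$ is taken to be the union of the two turnpike exceptional sets associated with $\mathbf{U}^*_N$ and $\tilde{\mathbf{U}}^*_N$, which has at most $2L$ elements. The main technical obstacle lies in the bookkeeping of these exceptional sets: one has to check that for every $M\notin\hat{\mathcal{Q}}$ all three turnpike-based ingredients (direct closeness of $\bar{X}_M$ and of $X_{\tilde{\mathbf{U}}^*_N}(M)$ to $X^s(\tcl+M)$, and the hypothesis of Lemma~\ref{lem:theorem1}, whose own exceptional set coincides with that of the unmodified turnpike) are simultaneously available, and that the symmetric use of Assumption~\ref{ass:ContinuityModified}(i)--(ii) produces the factor $2$ in front of each $\gamma$ rather than a larger one.
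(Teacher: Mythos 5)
Your proposal is correct and follows essentially the same route as the paper's proof: the same decomposition $\tilde{J}_N(\tcl,X_\tcl,\hat{\mathbf{U}})=\tilde{J}_M(\tcl,X_\tcl,\mathbf{U}^*_N)+\tilde{V}_{N-M}(\tcl+M,\bar{X}_M)$, the same telescoping of $\tilde{J}_M$ into $J_M$ plus storage-function boundary terms, the same use of Lemma~\ref{lem:theorem1} with $\tilde{\mathbf{U}}^*_N$ as competitor, the same triangle-inequality passes through $X^s(\tcl+M)$ via Assumptions~\ref{ass:ContinuityV} and \ref{ass:ContinuityModified}, and the same union of the two turnpike exceptional sets giving $2L$; your head/tail bookkeeping is just a reorganization of the paper's residuals $\hat{R}_1,\hat{R}_2,\hat{R}_3,\tilde{R}_1,R_3$. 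You also correctly note that the one-sided estimate from Lemma~\ref{lem:theorem1} must be complemented by the optimality bound $\tilde{V}_N(\tcl,X_\tcl)\leq\tilde{J}_N(\tcl,X_\tcl,\hat{\mathbf{U}})$ to control $R_4$ from below, which is exactly how the paper closes the argument.
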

\begin{proof}
The proof follows exactly the same steps as in the time-varying deterministic case, see \cite[Lemma~3]{Gruene2019}. It is given in Appendix~\ref{app}
\end{proof}

Using all these preparatory results, we can prove that the modified optimal value function $\tilde{V}_N$ is a Lyapunov function for the closed-loop system controlled by the MPC feedback $\mu_N$ and thus is $P$-practically asymptotically stable according to Theorem~\ref{thm:stabilityLyapunov}. Note that the ``practical'' neighborhood $\mathcal{P}(\tcl)$ of the stationary process can be chosen arbitrarily small by increasing the optimization horizon $N$. This will be used in the subsequent Corollary \ref{cor:semiprac}.

\begin{thm} \label{theorem2}
    Let the Assumptions~\ref{ass:Dissipativity}, \ref{ass:BoundedStorage_CheapReach}, \ref{ass:ContinuityV}, \ref{ass:modifiedStagecost}, and \ref{ass:ContinuityModified} hold. 
    Then, there exists $\theta_0 > 0$ and $N_0 \in \N$ such that for all $0 < \theta \leq \theta_0$ the following holds: \\
    There exists a comparison function $\delta \in \mathcal{L}$ such that for all $N \geq N_0$ the optimal value function $\tilde{V}_N$ is a uniform time-varying Lyapunov function for the closed-loop system $f(X,\mu_N(X),W(j))$ on $\mathcal{S}(\tcl) = \mathcal{Y}(\tcl)\backslash \mathcal{P}(\tcl)$ for the families of forward invariant sets $\mathcal{Y}(\tcl) = \tilde{V}_N^{-1}(\tcl,[0,\theta])$ and $\mathcal{P}(\tcl) = \tilde{V}_N^{-1}(\tcl,[0,\delta(N)])$.
\end{thm}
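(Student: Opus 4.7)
My approach is to verify the two conditions of Definition~\ref{defn:lyapunovFunction} directly, using Lemmas~\ref{lem:lem2}--\ref{lem:lemma3} to quantify the error terms, and then to deduce forward invariance of $\mathcal{Y}(\tcl)$. For the lower bound in condition~(i), strict stochastic dissipativity applied to the very first stage of any admissible control $\mathbf{U} \in \UU_{ad}^{N}(\tcl,X_\tcl)$ yields
\begin{equation*}
    \tilde{J}_N(\tcl, X_\tcl, \mathbf{U}) \geq \tilde{\ell}(\tcl, X_\tcl, U(0)) \geq \alpha(d(X_\tcl, X^s(\tcl))),
\end{equation*}
since the remaining stage costs are non-negative, hence $\tilde{V}_N(\tcl, X_\tcl) \geq \alpha(d(X_\tcl, X^s(\tcl)))$ and we may take $\alpha_1 := \alpha$. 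For the upper bound, Assumption~\ref{ass:modifiedStagecost} implies that propagating $(\mathbf{X}^s, \mathbf{U}^s)$ from time $\tcl$ gives a modified cost of zero, so $\tilde{V}_N(\tcl, X^s(\tcl)) = 0$; combining this with Assumption~\ref{ass:ContinuityModified}(ii) yields $\tilde{V}_N(\tcl, X_\tcl) \leq \gamma_{\tilde{V}}(d(X_\tcl, X^s(\tcl)))$ on $\Bcl_{\tilde{r}}^d(X^s(\tcl))$, so $\alpha_2 := \gamma_{\tilde{V}}$.

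The heart of the proof is condition~(ii). I would fix a parameter $L = L(N)$ satisfying $L(N) \to \infty$ and $L(N)/N \to 0$ and pick an index $M \in \{1,\ldots,\lfloor N/2 \rfloor\} \setminus \hat{\mathcal{Q}}(\tcl, X_\tcl, L, N)$, which is feasible whenever $N$ is sufficiently large since $\# \hat{\mathcal{Q}} \leq 2L$. Applying Lemma~\ref{lem:lemma3} with this $M$ produces a composite control $\hat{\mathbf{U}}$ whose first element equals $\mu_N(X_\tcl)$ (since $\hat{U}(0) = U^*_N(0)$) and which satisfies $\tilde{J}_N(\tcl, X_\tcl, \hat{\mathbf{U}}) = \tilde{V}_N(\tcl, X_\tcl) + R_4$, where $|R_4|$ is controlled by $2\gamma_{\tilde{V}}(\vartheta(L)) + 2\gamma_\lambda(\vartheta(L)) + 2\gamma_V(N-M, \vartheta(L))$ and hence tends to $0$ as $N \to \infty$ (this is where the choice $N-M \geq N/2 \to \infty$ matters for the $\gamma_V$ term via Assumption~\ref{ass:ContinuityV}). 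Splitting off the first stage of $\hat{\mathbf{U}}$ via the dynamic programming principle applied to the modified problem and bounding the tail by $\tilde{V}_{N-1}$ gives
\begin{equation*}
    \tilde{V}_N(\tcl, X_\tcl) + R_4 \geq \tilde{\ell}(\tcl, X_\tcl, \mu_N(X_\tcl)) + \tilde{V}_{N-1}(\tcl+1, X_{\tcl+1}),
\end{equation*}
with $X_{\tcl+1} := f(X_\tcl, \mu_N(X_\tcl), W(\tcl))$. Invoking Lemma~\ref{lem:lem2} (with horizon $N-1$) to replace $\tilde{V}_{N-1}(\tcl+1, X_{\tcl+1})$ by $\tilde{V}_N(\tcl+1, X_{\tcl+1})$ at the cost of an additional $4\gamma_{\tilde{V}}(\vartheta(L))$, and using the dissipativity bound $\tilde{\ell}(\tcl, X_\tcl, \mu_N(X_\tcl)) \geq \alpha(d(X_\tcl, X^s(\tcl)))$, I would obtain
\begin{equation*}
    \tilde{V}_N(\tcl+1, X_{\tcl+1}) \leq \tilde{V}_N(\tcl, X_\tcl) - \alpha(d(X_\tcl, X^s(\tcl))) + \varepsilon(N),
\end{equation*}
with some residual $\varepsilon(N) \to 0$ as $N \to \infty$.

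To conclude, I would set $\delta(N) := \alpha_2\!\left(\alpha^{-1}(2\varepsilon(N))\right)$, which defines the practical neighborhood $\mathcal{P}(\tcl)$ and inherits the class-$\mathcal{L}$ property from $\varepsilon$. On $\mathcal{S}(\tcl)$ the inequality $\tilde{V}_N(\tcl, X_\tcl) > \delta(N)$ together with the upper bound from $\alpha_2$ forces $\alpha(d(X_\tcl, X^s(\tcl))) \geq 2\varepsilon(N)$, so the above decrease gives condition~(ii) of Definition~\ref{defn:lyapunovFunction} with $\alpha_V := \alpha/2$. Forward invariance of $\mathcal{Y}(\tcl) = \tilde{V}_N^{-1}(\tcl, [0,\theta])$ then follows because on $\mathcal{S}(\tcl)$ the Lyapunov inequality keeps $\tilde{V}_N$ non-increasing along the closed loop, while on $\mathcal{P}(\tcl)$ the always-valid one-step bound $\tilde{V}_N(\tcl+1, X_{\tcl+1}) \leq \tilde{V}_N(\tcl, X_\tcl) + \varepsilon(N) \leq \delta(N) + \varepsilon(N)$ remains below $\theta$ for $\theta_0$ chosen small enough that the sublevel sets sit inside $\Bcl_r^d(X^s(\tcl))$ and large enough that $\delta(N) + \varepsilon(N) \leq \theta$ for $N \geq N_0$. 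The main obstacle I anticipate is precisely this simultaneous calibration of $L(N)$, $M(N)$, and the resulting $\varepsilon(N)$, $\delta(N)$: the coupling must ensure that both $\vartheta(L(N)) \to 0$ and $N-M(N) \to \infty$ so that the composite remainder from Lemmas~\ref{lem:lem2} and \ref{lem:lemma3} (using the joint-limit behaviour of $\gamma_V$ from Assumption~\ref{ass:ContinuityV}) is genuinely of class $\mathcal{L}$ in $N$, and one must verify that the exceptional index sets behave uniformly enough in $(\tcl, X_\tcl)$ over $\mathcal{Y}(\tcl)$ to yield the claimed uniform decrease.
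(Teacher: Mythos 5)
Your proposal follows essentially the same route as the paper: the lower bound $\alpha_1=\alpha$ from strict dissipativity, the upper bound $\alpha_2=\gamma_{\tilde V}$ from Assumption~\ref{ass:modifiedStagecost} together with Assumption~\ref{ass:ContinuityModified}(ii), and the decrease obtained by combining the composite control of Lemma~\ref{lem:lemma3} with Lemma~\ref{lem:lem2}, calibrating $L$, $M\leq N/2$ and the exceptional sets so that the residual is of class $\mathcal{L}$ in $N$. Your one simplification --- bounding the tail $\tilde J_{N-1}(\tcl+1,X^+_\tcl,\hat{\mathbf{U}}(\cdot+1))\geq \tilde V_{N-1}(\tcl+1,X^+_\tcl)$ by optimality instead of applying Lemma~\ref{lem:lemma3} a second time as the paper does --- is legitimate for the decrease inequality and saves one residual term and one exceptional set. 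Two points need attention before invoking Lemma~\ref{lem:lem2} at $(\tcl+1,X^+_\tcl)$: you must first check $X^+_\tcl\in\Bcl^d_r(X^s(\tcl+1))$ (the paper does this via $\theta_2=\alpha^{-1}(r)$ and the chain $\theta_2\geq \tilde V_N(\tcl,X_\tcl)\geq\tilde\ell(\tcl+1,X^+_\tcl,U^*_N(1))\geq\alpha(d(X^+_\tcl,X^s(\tcl+1)))$), and the index $M$ must also avoid the exceptional set $\bar{\mathcal{Q}}(\tcl+1,X^+_\tcl,L,N-1)$ of that lemma; both are covered by your general calibration remark but should be made explicit.

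The one genuine gap is the forward invariance of $\mathcal{P}(\tcl)$, which the theorem statement asserts and Theorem~\ref{thm:stabilityLyapunov} requires. Your argument only shows that a state in $\mathcal{P}(\tcl)$ lands in $\mathcal{Y}(\tcl+1)$, not in $\mathcal{P}(\tcl+1)$, and your choice $\delta(N)=\alpha_2(\alpha^{-1}(2\varepsilon(N)))$ does not automatically repair this: if $\tilde V_N(\tcl,X_\tcl)\leq\delta(N)$ but $\alpha(d(X_\tcl,X^s(\tcl)))<\varepsilon(N)$, the one-step bound only yields $\tilde V_N(\tcl+1,X^+_\tcl)\leq \alpha_2(\alpha^{-1}(\varepsilon(N)))+\varepsilon(N)$, which need not lie below $\alpha_2(\alpha^{-1}(2\varepsilon(N)))$ for general $\alpha,\alpha_2$. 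The paper resolves this by enlarging the practical set, taking $\delta(N)=\max\{\eta^{-1}(2\nu(N)),\,\eta^{-1}(\nu(N))+\nu(N)\}$ with $\eta=\alpha\circ\alpha_2^{-1}$ and splitting into the cases $\eta(\tilde V_N(\tcl,X_\tcl))\geq\nu(N)$ (where the decrease gives $\tilde V_N(\tcl+1,X^+_\tcl)\leq\tilde V_N(\tcl,X_\tcl)\leq\delta(N)$) and $\eta(\tilde V_N(\tcl,X_\tcl))<\nu(N)$ (where $\tilde V_N(\tcl+1,X^+_\tcl)\leq\eta^{-1}(\nu(N))+\nu(N)\leq\delta(N)$). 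You need the analogous enlargement and case distinction; with that adjustment your proof is complete.
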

\begin{proof}
    Due to Assumption~\ref{ass:Dissipativity} it follows that 
    \begin{equation}
        \begin{split} \label{eq:lowerBoundLyap}
            \tilde{V}_N(\tcl,X_\tcl) &= \inf_{\mathbf{U} \in \UU_{ad}^N(\tcl,X_\tcl)} \sum_{\tol=0}^{N-1} \tilde{\ell}(\tcl+\tol,X_{\mathbf{U}}(\tol;\tcl,X_\tcl),U(\tol)) \\
            &\geq \inf_{\mathbf{U} \in \UU_{ad}^N(\tcl,X_\tcl)} \sum_{\tol=0}^{N-1}  \alpha(d(X_U(\tol;\tcl,X_\tcl),X^s(\tcl+\tol))) \geq \alpha(d(X_\tcl,X^s(\tcl))) 
        \end{split}
    \end{equation}
    holds for all $X_\tcl \in \RR{\Omega,\X}$ and $\tcl \in \N_0$. 
    Thus, the comparison function $\alpha_1 = \alpha \in \K_{\infty}$ yields a valid lower bound of the optimal value function.
    In the next step we establish an upper bound.
    Since Assumption~\ref{ass:modifiedStagecost} implies that $\tilde{V}_N(\tcl,X^s(\tcl)) = 0$ for all $\tcl \in \N_0$ we obtain by Assumption~\ref{ass:ContinuityModified}(ii) that
    $$\vert \tilde{V}(\tcl,X_\tcl) \vert \leq \gamma_{\tilde{V}}(d(X_\tcl,X^s(\tcl)))$$
    for all $X_\tcl \in \Bcl^d_{\tilde{r}}(X^s(\tcl))$, $\tcl \in \N_0$. 
    Furthermore, by dissipativity we can conclude that for $\theta_1 := \alpha^{-1}(\tilde{r})$ the inequality
    \begin{equation} \label{eq:theta1}
        \theta_1 \geq \tilde{V}_N(X_\tcl) \geq \alpha(d(X_\tcl,X^s(\tcl)))
    \end{equation}
    holds, which yields $\tilde{V}_N^{-1}(\tcl,[0,\theta_1]) \subseteq \Bcl^d_{\tilde{r}}(X^s(\tcl))$ for all $\tcl \in \N_0$.
    Thus, the comparison function $\alpha_2 = \gamma_{\tilde{V}} \in \K_{\infty}$ yields an upper bound of $\tilde{V}_N(\tcl,X_\tcl)$ for all $X_\tcl \in \tilde{V}_N^{-1}(\tcl,[0,\theta_1])$, $\tcl \in \N_0$.
    The next part of the proof shows the decrease of the modified optimal value function along the MPC closed-loop trajectory.
    To this end, consider the composite control sequence $\hat{\mathbf{U}}$ from Lemma \ref{lem:lemma3} and define $X^+_\tcl := X_{\hat{\mathbf{U}}}(1;\tcl,X_\tcl)$. Then, by  Lemma~\ref{lem:lemma3} we obtain
    \begin{align*}
        & \tilde{V}_N(\tcl,X_\tcl) + R_4(\tcl,X_\tcl,M,N)
        = \tilde{J}_N(\tcl,X_\tcl,\hat{\mathbf{U}}) 
        = \tilde{\ell}(\tcl,X_\tcl,\mu_N(X_\tcl)) +  \tilde{J}_{N-1}(\tcl+1,X^+_\tcl,\hat{\mathbf{U}}(\cdot+1)).
    \end{align*}
    Moreover, since $\hat{\mathbf{U}}(\cdot+1)$ corresponds to the optimal control sequence $\mathbf{U}^*_{N-1,X^+}$ up to horizon $M-1$, according to the stochastic dynamic programming principle we can apply Lemma~\ref{lem:lemma3} again to $\tilde{J}_{N-1}(\tcl+1,X^+,\hat{\mathbf{U}}(\cdot+1))$ and get
    \begin{equation} \label{eq:firstEqToDecrease}
        \begin{split}
            \tilde{V}_N(\tcl&,X_\tcl) + R_4(\tcl,X_\tcl,M,N) \\
        &= 
        \tilde{\ell}(\tcl,X_\tcl,\mu_N(X_\tcl)) +  \tilde{V}_{N-1}(\tcl+1,X^+_\tcl) + R_4(\tcl+1,X^+_\tcl,M-1,N-1).
        \end{split}
    \end{equation}
    Furthermore, applying Lemma~\ref{lem:lem2} to equation~\eqref{eq:firstEqToDecrease} in order to estimate the optimal value function with the optimization horizon $N-1$ by the optimal value function with the optimization horizon $N$ leads to 
    \begin{align}
        \tilde{V}_{N}(\tcl+1,X^+_\tcl) =& \tilde{V}_N(\tcl,X_\tcl) - \tilde{\ell}(\tcl,X_\tcl, \mu_N(X_\tcl)) - R_2(\tcl+1,X^+_\tcl,M-1,N-1) \nonumber \\
        &- R_4(\tcl+1,X^+_\tcl,M-1,N-1) + R_4(\tcl,X_\tcl,M,N). \label{eq:theorem2INeq}
    \end{align}
    Now, consider $\theta_2 := \alpha^{-1}(r)$ with $r$ from Assumption~\ref{ass:BoundedStorage_CheapReach}. 
    Then, analogous to equation~\eqref{eq:theta1} for all $X_\tcl \in \tilde{V}^{-1}_N(\tcl,[0,\theta_2])$ we obtain $X_\tcl \in \Bcl^d_r(X^s(\tcl))$ and
    \begin{equation*}
        \theta_2 \geq \tilde{V}_N(X_\tcl) \geq \tilde{\ell}(\tcl+1,X^+_\tcl,U^*_N(1)) \geq \alpha(d(X^+_\tcl,X^s(\tcl+1)))
    \end{equation*}
    which implies $X^+_\tcl \in \Bcl^d_r(X^s(\tcl+1))$.
    Thus, choosing  $L \geq \lceil \vartheta^{-1}(\min\{\eps,\tilde{\eps}\}) \rceil$ and using the estimates on the residual terms from Lemma~\ref{lem:lem2} and Lemma~\ref{lem:lemma3} we obtain that
    \begin{align}
        & -R_2(\tcl+1,X^+_\tcl,M-1,N-1) - R_4(\tcl+1,X^+_\tcl,M-1,N-1) + R_4(\tcl,X_\tcl,M,N) \nonumber \\
        & \leq |R_2(\tcl+1,X^+_\tcl,M-1,N-1)| + |R_4(\tcl+1,X^+_\tcl,M-1,N-1)| + |R_4(\tcl,X_\tcl,M,N)| \nonumber \\
        & \leq 8\gamma_{\tilde{V}}(\vartheta(L))+ 4\gamma_{\lambda}(\vartheta(L)) + 4\gamma_V(N-M,\vartheta(L)) \label{eq:estimateResiduum}
    \end{align}
    holds for all 
    $M \in \{0,\ldots,N\} \setminus ( \bar{\mathcal{Q}}(\tcl+1,X^+_\tcl,L,N-1) \cup \hat{\mathcal{Q}}(\tcl+1,X^+_\tcl,L,N-1) \cup \hat{\mathcal{Q}}(\tcl,X_\tcl,L,N) )$.
    Because each of the sets $\bar{\mathcal{Q}}(\tcl+1,X^+_\tcl,L,N-1)$, $\hat{\mathcal{Q}}(\tcl+1,X^+_\tcl,L,N-1)$, $\hat{\mathcal{Q}}(\tcl,X_\tcl,L,N)$ contains at most $2L$ elements, we can guarantee for $N \geq 12L$ that there exists at least one such $M$ satisfying $M \leq \frac{N}{2}$ and $N-M \geq \frac{N}{2}$.
    Hence, for all $N \geq N_0 := 12 \lceil \vartheta^{-1}(\min\{\eps,\tilde{\eps}\}) \rceil$ an upper bound $\nu(N)$ of \eqref{eq:estimateResiduum} only depending on $N$ is given by 
    \begin{equation}
        \nu(N) := 8\gamma_{\tilde{V}}\left( \vartheta \left( \dfrac{N}{12} \right) \right)+ 4\gamma_{\lambda} \left( \vartheta \left( \dfrac{N}{12} \right) \right) + 4\gamma_V \left( \frac{N}{2}, \vartheta \left( \dfrac{N}{12} \right) \right). \label{eq:upperBoundNu}
    \end{equation}
    Using this estimate, the upper bound on $\tilde{V}_N$, and dissipativity in equation~\eqref{eq:theorem2INeq} we obtain the inequality 
    \begin{equation*}
        \tilde{V}_N(\tcl+1,X^+_\tcl) \leq \tilde{V}_N(\tcl,X_\tcl) - \eta(\tilde{V}_N(\tcl,X_\tcl)) + \nu(N)
    \end{equation*}
    with $\eta := \alpha \circ \alpha_2^{-1} \in \K_{\infty}$. 
    Now define $\mathcal{P}(\tcl) := \tilde{V}_N^{-1}(\tcl,[0,\delta(N)])$ with 
    $$\delta(N) := \max \{ \eta^{-1}(2\nu(N)), \eta^{-1}(\nu(N)) + \nu(N) \}.$$
    Then, for all $X_\tcl \in \tilde{V}_N^{-1}(\tcl,[0,\theta_2]) \backslash \mathcal{P}(\tcl)$ the inequality
    \begin{equation*}
        \tilde{V}_{N}(\tcl,X_\tcl) \geq \delta(N) \geq \eta^{-1}(2\nu(N))
    \end{equation*}
    holds, which implies
    \begin{equation}
        \nu(N) \leq \frac{\eta(\tilde{V}_{N}(\tcl,X_\tcl))}{2}.
    \end{equation}
    Thus, using the lower bound on $\tilde{V}_N$ we obtain
    \begin{align}
        \tilde{V}_{N}(\tcl+1,X^+_\tcl) & \leq \tilde{V}_{N}(\tcl,X_\tcl) - \eta(\tilde{V}_{N}(\tcl,X_\tcl)) + \nu(N) \label{eq:eq1}\\
        & \leq \tilde{V}_{N}(\tcl,X_\tcl) - \dfrac{\eta(\tilde{V}_{N}(\tcl,X_\tcl))}{2} \nonumber \\
        & \leq \tilde{V}_{N}(\tcl,X_\tcl) - \dfrac{\eta(\alpha_1(d(X_\tcl,X^s(\tcl))))}{2}, \label{eq:eq2}
    \end{align}
    which shows the decrease property of the optimal value function for all $X_\tcl \in \tilde{V}_N^{-1}(\tcl,[0,\theta_2]) \backslash \mathcal{P}(\tcl)$ with $\alpha_V(r) = \nu(\alpha_1(r))/2$.
    Thus, for $\mathcal{P}(\tcl) = \tilde{V}_N^{-1}(\tcl,[0,\delta(N)])$ and $\mathcal{Y}(\tcl) = \tilde{V}_N^{-1}(\tcl,[0,\theta])$ with $\theta \leq \theta_0 := \min\{\alpha^{-1}(r),\alpha^{-1}(\tilde{r})\}$ all conditions from Definition~\ref{defn:lyapunovFunction} are satisfied.
    It remains to show that the sets $\mathcal{Y}(\tcl)$ and $\mathcal{P}(\tcl)$ are forward invariant.
    We start with the set $\mathcal{P}(\tcl)$. Let $X_\tcl \in \mathcal{P}(\tcl)$, and hence $\tilde{V}_N(\tcl,X_\tcl) \leq \delta(N)$. In order to show that $X^+_\tcl \in \mathcal{P}(\tcl+1)$, we distinguish two different cases.
    \begin{enumerate}[1.)]
        \item Assume that $\eta(\tilde{V}_N(\tcl,X_\tcl)) \geq \nu(N)$ holds. Then, by \eqref{eq:eq1} we obtain
        $$ \tilde{V}_{N}(\tcl+1,X^+_\tcl) \leq \tilde{V}_{N}(\tcl,X_\tcl) - \eta(\tilde{V}_{N}(\tcl,X_\tcl)) + \nu(N) \leq \tilde{V}_{N}(\tcl,X_\tcl) \leq \delta(N).$$
        \item Assume that $\eta(\tilde{V}_N(\tcl,X_\tcl)) < \nu(N)$ holds. Then, using \eqref{eq:eq1} we get
        \begin{align*}
            \tilde{V}_N(\tcl+1,X^+_\tcl) & \leq \tilde{V}_N(\tcl,X_\tcl) - \eta(\tilde{V}_N(\tcl,X_\tcl)) + \nu(N) \\
            & \leq \tilde{V}_N(\tcl,X_\tcl) + \nu(N) \\
            & \leq \eta^{-1}(\nu(N)) + \nu(N)
            \leq \delta(N).
    \end{align*}
    \end{enumerate}
    Hence, in both cases we receive $\tilde{V}_N(\tcl+1,X^+_\tcl) \leq \delta(N)$ which implies $X^+_\tcl \in \mathcal{P}(\tcl+1)$, and thus, shows the forward invariance of $\mathcal{P}(\tcl)$.
    Now consider $X_\tcl \in \mathcal{Y}(\tcl) \setminus \mathcal{P}(\tcl)$. Then, by equation~\eqref{eq:eq2} we obtain
    \begin{equation*}
        \tilde{V}_{N}(\tcl+1,X^+) \leq \tilde{V}_{N}(\tcl,X_\tcl) - \dfrac{\eta(\alpha_1(d(X,X^s(\tcl))))}{2} \leq \tilde{V}_{N}(\tcl,X_\tcl) \leq \theta,
    \end{equation*}
    which implies $X^+_\tcl \in \mathcal{Y}(\tcl+1)$, and thus, together with the invariance of $\mathcal{P}(\tcl)$ proves the  forward invariance of $\mathcal{Y}(\tcl)$.
\end{proof}

Note that Theorem~\ref{theorem2} shows local practical asymptotic stability, since our assumptions hold only locally.
However, if we modify Assumption~\ref{ass:Dissipativity} and Assumption~\ref{ass:ContinuityModified}(ii) in an appropriate way, we can extend this result in the following semi-global way.

\begin{coro}\label{cor:semiprac}
    Let the Assumptions~\ref{ass:Dissipativity}, \ref{ass:BoundedStorage_CheapReach}, \ref{ass:ContinuityV}, \ref{ass:modifiedStagecost}, and \ref{ass:ContinuityModified} hold. 
    Moreover, consider that the Assumptions~\ref{ass:BoundedStorage_CheapReach} and \ref{ass:ContinuityModified} hold for all $r,\tilde{r} > 0$.\\
    Then, the stationary process $\mathbf{X}^s$ is semi-globally practically asymptotically stable with respect to the optimization horizon $N$, i.e.,
    there exists $\beta \in \K\mathcal{L}$ such that the following property holds: 
    For each $\Delta_1 > 0$ and $\Delta_2 > \Delta_1$ there exists $N_{\Delta} \in \N$ such that for all $N \geq N_{\Delta}$ and all $X_{\tcl_0} \in \Bcl_{\Delta_2}^d(X^s(\tcl_0))$ the inequality
    \begin{equation}
        d(X_{\mu_N}(\tcl; \tcl_0, X_{\tcl_0}), X^s(\tcl+\tcl_0)) \leq \max\{\beta(d(X_{\tcl_0},X^s(\tcl_0)), \tcl), \Delta_1\}
    \end{equation}
    holds for all $\tcl_0,\tcl \in \mathbb{N}_0$.
\end{coro}
\begin{proof}
    Since the Assumptions~\ref{ass:BoundedStorage_CheapReach} and \ref{ass:ContinuityModified} hold for all $r,\tilde{r} > 0$, we can set $r = \tilde{r} = \alpha^{-1}(\gamma_{\tilde{V}}(\Delta_2))$ implying that we can choose $\theta = \theta_0 = \gamma_{\tilde{V}}(\Delta_2)$ in Theorem~\ref{theorem2}.
    Further, consider $N_{\Delta_{1,2}} \geq N_0$ large enough such that $\alpha(\delta(N_{\Delta_{1,2}})) \leq \Delta_1$ holds with $N_0 > 0$ and $\delta \in \mathcal{L}$ from Theorem~\ref{theorem2}.
    Then, the statement follows directly by Theorem~\ref{theorem2}, since these choices imply that $\Bcl^d_{\Delta_2}(X^s(\tcl)) \subseteq \mathcal{Y}(\tcl)$ and $\mathcal{P}(\tcl) \subseteq \Bcl^d_{\Delta_1}(X^s(\tcl))$ hold for all $\tcl \in \N_0$ and $\mathcal{Y}(\tcl), \mathcal{P}(\tcl)$ from Theorem~\ref{theorem2}.
\end{proof}

The results of this section hold for arbitrary (pseudo)metrics, provided all assumptions are satisfied with respect to the same metric. 
However, as noted when introducing the notions of stochastic turnpike and dissipativity, the specific choice of the metric affects the strength of the resulting stability statements.
For example, if $d$ is the $L^p$-norm from equation~\eqref{eq:LpNorm}, we obtain $p$-th mean stability, which aligns with convergence in the $p$-th mean.
Similarly, the Ky-Fan metric from equation~\eqref{eq:KyFanMetric} metricizes convergence in probability, and the Lévy-Prokhorov metric from equation~\eqref{eq:LevyProhkorovMetric} metricizes weak convergence of measures, i.e.\ convergence in distribution.
Hence, the stability properties we derive are consistent with classical stochastic convergence concepts.
\section{Near-optimal performance of stochastic MPC} \label{sec:Performance}
After investigating the stability properties of the closed-loop solution, we now aim to show that the solutions approximated by our proposed MPC algorithm also have near-optimal performance.
However, before starting, we have to clarify how we measure optimality on the infinite time horizon.
The problem that arises here is that, in general, the optimal cost on an infinite horizon will not be finite, even if $\vert V_N(j,X) \vert < \infty$ holds for all $N \in \mathbb{N}$.
Thus, one has to find a way to determine whether a control sequence has better performance on an infinite horizon than another, although both incur infinite cost.
In our case, we use the following definition of overtaking optimality, originally introduced in \cite{gale1967}.

\begin{defi}[Overtaking optimality] \label{defn:overtakingOptimality}
    Let $X_\tcl \in \RR{\Omega,\X}$ and consider a control sequence $\mathbf{U}^*_{\infty} \in \UU^{\infty}_{ad}(\tcl,X_\tcl)$ with corresponding state trajectory $X_{\mathbf{U}^*_{\infty}}(\cdot;\tcl,X_\tcl)$. Then, the pair $(X_{\mathbf{U}^*_{\infty}}(\cdot;\tcl,X_\tcl),\mathbf{U}^*_{\infty})$ is called overtaking optimal if
    \begin{equation}
        \liminf_{K \to \infty} \left( \sum_{\tol=0}^{K-1} \ell(X_{\mathbf{U}}(\tol;\tcl,X_\tcl),U(\tol)) - \ell(X_{\mathbf{U}^*_{\infty}}(\tol;\tcl,X_\tcl),U^*_{\infty}(\tol)) \right) \geq 0
    \end{equation}
    holds for all $\mathbf{U} \in \UU^{\infty}_{ad}(\tcl,X_\tcl)$.
\end{defi}

The above definition can be interpreted in the following way: 
A control sequence is overtaking optimal if for all $\eps > 0$ there exists a $K_{\eps} \in \N$ such that for all $K \geq K_{\eps}$, the performance of this sequence is at most $\eps$ worse than that of any other admissible control sequence.

However, while this definition enables us to define an optimal control sequence even in the case of infinite costs, we cannot use it to formulate a cheap reachability condition, which is necessary to derive the infinite-horizon turnpike property from our dissipativity assumption.
To resolve this problem, we introduce the following shifted optimal control problem.

\begin{defi}[Shifted optimal control problem]
    Assume that the optimal control problem \eqref{eq:stochOCPopenloop} is stochastically dissipative at $(\mathbf{X}^s,\mathbf{U}^s)$.
    Then we define the shifted optimal control problem as 
    \begin{equation} \label{eq:stochOCPshifted}
        \begin{split}
            \minimize_{\mathbf{U} \in \UU_{ad}^{N}(\tcl,X_\tcl)} &~\hat{J}_N(X_\tcl,\mathbf{U}) := \sum_{\tol=0}^{N-1} \hat{\ell}(X(\tol;\tcl,X_\tcl),U(\tol))
        \end{split}
    \end{equation}
    with $\hat{\ell}(X,U) := \ell(X,U) - \ell(\mathbf{X}^s,\mathbf{U}^s)$ and corresponding optimal value function
    $\hat{V}_N(\tcl,X) := \inf_{\mathbf{U} \in \UU_{ad}^{N}(\tcl,X)} \hat{J}_N(X,\mathbf{U})$.
\end{defi}

Note that, for finite horizons, the optimal solution to the shifted problem \eqref{eq:stochOCPshifted} and the original problem~\eqref{eq:stochOCPopenloop} will coincide, since $\ell(\mathbf{X}^s,\mathbf{U}^s)$ is only a constant shift.
Moreover, on the infinite horizon, the minimizing solution to \eqref{eq:stochOCPshifted} will be overtaking optimal in the sense of Definition~\ref{defn:overtakingOptimality}.

Similar to Assumption~\ref{ass:ContinuityV} and Assumption~\ref{ass:ContinuityModified}(ii), we impose a continuity property on the shifted cost on the infinite horizon.
Note that the continuity of the finite-horizon optimal cost is guaranteed by Assumption~\ref{ass:ContinuityV}, since $$\hat{V}_N(\tcl,X) - \hat{V}_N(\tcl,X^s(\tcl)) = V_N(\tcl,X) - V_N(\tcl,X^s(\tcl))$$ holds.

\begin{ass} \label{ass:ContinuityVshifted}
    Consider the stationary pair $(\mathbf{X}^s,\mathbf{U}^s)$ and the (pseudo)metric $d$ from Assumption~\ref{ass:Dissipativity}. 
    Then, we assume that $V_{\infty}$ is continuous at $\mathbf{X}^s$, i.e., there exist $\hat{\eps} > 0$, $\gamma_{V_{\infty}} \in \K$ such that
    $$\vert \hat{V}_{\infty}(\tcl,X_\tcl) - \hat{V}_{\infty}(\tcl,X^s(\tcl)) \vert \leq \gamma_{V_{\infty}}(d(X_\tcl,X^s(\tcl)))$$
    holds for all $\tcl \in \N_0$ and $X_\tcl \in \Bcl_{\hat{\eps}}^d(X^s(\tcl))$.
\end{ass}

Using the above assumption we can now show that the shifted optimal cost is indeed finite in our framework.

\begin{lem} \label{lem:finiteV}
    Let Assumption~\ref{ass:Dissipativity}, Assumption~\ref{ass:BoundedStorage_CheapReach}, and Assumption~\ref{ass:ContinuityVshifted} hold. 
    Then, there exists $C_{V_{\infty}} > 0$ such that for all $(\tcl,X_\tcl) \in \Bcl^d_r(\mathbf{X}^s)$ it holds that
    $\vert \hat{V}_{\infty}(\tcl,X_\tcl) \vert \leq C_{V_{\infty}}$.
\end{lem}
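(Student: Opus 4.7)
My plan is to establish uniform upper and lower bounds on $\hat{V}_\infty(\tcl, X_\tcl)$ separately. For the \textbf{lower bound}, I would rewrite the dissipation identity from Assumption~\ref{ass:Dissipativity} as $\hat{\ell}(X,U) = \tilde{\ell}(\tcl,X,U) - \lambda(\tcl,X) + \lambda(\tcl+1, f(X,U,W(\tcl)))$ and telescope: for every admissible $\mathbf{U} \in \UU_{ad}^{\infty}(\tcl, X_\tcl)$ and every $K \in \N$,
\begin{equation*}
\sum_{\tol=0}^{K-1} \hat{\ell}(X(\tol), U(\tol)) \geq -\lambda(\tcl,X_\tcl) + \lambda(\tcl+K, X(K)) \geq -C_\lambda - C^l_\lambda,
\end{equation*}
since $\tilde{\ell} \geq 0$ by strict dissipativity, $\lambda \geq -C^l_\lambda$ globally by Definition~\ref{defn:stochDissi}, and $|\lambda(\tcl,X_\tcl)| \leq C_\lambda$ on $\Bcl^d_r(\mathbf{X}^s)$ by Assumption~\ref{ass:BoundedStorage_CheapReach}(i). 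Sending $K \to \infty$ and taking the infimum over $\mathbf{U}$ yields $\hat{V}_\infty(\tcl, X_\tcl) \geq -(C_\lambda + C^l_\lambda)$.

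For the \textbf{upper bound}, I would construct an admissible infinite-horizon control by gluing a finite initial segment (cheap by cheap reachability) to an asymptotically optimal tail that starts near the turnpike (cheap by continuity of $\hat{V}_\infty$ at $\mathbf{X}^s$). Fix $L \in \N$ with $\vartheta(L) \leq \min\{\hat{\eps}, r\}$, where $\vartheta$ is the turnpike function from \eqref{eq:vartheta}. Theorem~\ref{thm:Turnpike} applies under our hypotheses, so for any $N > L$ there exists $M \in \{0,\dots,N\}$ with $X_M := X_{\mathbf{U}^*_N}(M; \tcl, X_\tcl) \in \Bcl^d_{\vartheta(L)}(X^s(\tcl+M))$. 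Concatenating $\mathbf{U}^*_N$ restricted to the first $M$ steps with any $\eps$-optimizer of $\hat{V}_\infty(\tcl+M, X_M)$ yields an admissible $\mathbf{U} \in \UU_{ad}^{\infty}(\tcl,X_\tcl)$ since each piece respects the filtration and pathwise constraints of its own subproblem.

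The infinite-horizon cost of this $\mathbf{U}$ splits as $\hat{J}_M(X_\tcl, \mathbf{U}) + \hat{V}_\infty(\tcl+M, X_M) + \eps$. The first summand is controlled by the finite-horizon DPP (Theorem~\ref{thm:FiniteDPP}):
\begin{equation*}
\hat{J}_M(X_\tcl, \mathbf{U}) = \hat{V}_N(\tcl, X_\tcl) - \hat{V}_{N-M}(\tcl+M, X_M) \leq C_V + C_\lambda + C^l_\lambda,
\end{equation*}
combining cheap reachability with the finite-horizon analogue of the previous paragraph's lower bound, applicable since $X_M \in \Bcl^d_r(X^s(\tcl+M))$. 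For the second summand, Assumption~\ref{ass:ContinuityVshifted} gives $\hat{V}_\infty(\tcl+M, X_M) \leq \hat{V}_\infty(\tcl+M, X^s(\tcl+M)) + \gamma_{V_\infty}(\vartheta(L))$, and applying the stationary control $\mathbf{U}^s$ starting from $X^s(\tcl+M)$ produces zero shifted running cost, so $\hat{V}_\infty(\tcl+M, X^s(\tcl+M)) \leq 0$. Summing and sending $\eps \to 0$ delivers the uniform upper bound $\hat{V}_\infty(\tcl, X_\tcl) \leq C_V + C_\lambda + C^l_\lambda + \gamma_{V_\infty}(\vartheta(L))$.

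The main obstacle I anticipate is technical rather than conceptual: one must ensure that Assumption~\ref{ass:ContinuityVshifted} can be invoked at $X_M$ without its right-hand side becoming an ill-defined $\infty - \infty$ expression. This is resolved a posteriori, since the lower-bound argument of the first paragraph applied at the stationary point, together with the stationary-control estimate, confines $\hat{V}_\infty(\tcl+M, X^s(\tcl+M))$ to the finite interval $[-(C_\lambda + C^l_\lambda), 0]$. Setting $C_{V_\infty} := \max\{C_V + C_\lambda + C^l_\lambda + \gamma_{V_\infty}(\vartheta(L)),\, C_\lambda + C^l_\lambda\}$ then closes the argument.
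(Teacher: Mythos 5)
Your proof is correct and follows essentially the same route as the paper's: use the turnpike property to land in a $\min\{\hat\eps,r\}$-ball around $X^s(\tcl+M)$, bound the initial segment via the finite-horizon DPP together with cheap reachability, and bound the tail via Assumption~\ref{ass:ContinuityVshifted} combined with $\hat V_\infty(\tcl,X^s(\tcl))\in[-(C_\lambda+C^l_\lambda),0]$. Your explicit separation into a telescoped lower bound and a concatenation-based upper bound is in fact slightly more careful than the paper's one-line absolute-value estimate, but the ingredients and the resulting constant are the same up to bookkeeping.
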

\begin{proof}
    Consider $(\tcl,X_\tcl) \in \Bcl^d_r(\mathbf{X}^s)$, $\hat{\eps}$ from the continuity property on infinite horizon from Assumption~\ref{ass:ContinuityVshifted}, $r$ from Assumption~\ref{ass:BoundedStorage_CheapReach}(i) and set $\delta := \min\{\hat{\eps},r\}$.
    Now we pick $L \in \N$ such that $\vartheta(L) < \delta$ with $\vartheta \in \mathcal{L}$ from Definition~\ref{thm:Turnpike}. 
    Thus, we can conclude by the turnpike property, which holds on locally on $\Bcl_r^d(X^s(\tcl))$ due to Lemma~\ref{thm:Turnpike} that for all $N > L$ there is $M \leq N$ such that $X_{\mathbf{U}^*_{N}}(M;\tcl,X_\tcl) \in \Bcl_{\delta}^d(X^s(M+\tcl))$.
    Hence, we obtain by the continuity property that 
    \begin{equation*}
    \begin{split}
        \vert \hat{V}_{\infty}(\tcl,&X_{\mathbf{U}^*_{N}}(M;\tcl,X_\tcl)) - \hat{V}_{\infty}(\tcl,X^s(\tcl)) \vert
        \leq \gamma_{V_\infty}\left(d ( X_{\mathbf{U}^*_{N}}(M;\tcl,X_\tcl), X^s(\tcl+M) )\right) < \gamma_{V_\infty}(\delta).
    \end{split}
    \end{equation*}
    By optimality this implies
    \begin{equation*}
    \begin{split}
        \vert \hat{V}_{\infty}(\tcl,X_\tcl) \vert \leq& \vert \hat{J}_M(X_\tcl,\mathbf{U}^*_{N}) + \hat{V}_{\infty}(\tcl,X_{\mathbf{U}^*_{N}}(M;\tcl,X_\tcl)) - \hat{V}_{\infty}(\tcl,X^s(\tcl)) + \hat{V}_{\infty}(\tcl,X^s(\tcl)) \vert \\
        <& \vert \hat{J}_M(X_\tcl,\mathbf{U}^*_{N}) \vert + \vert \hat{V}_{\infty}(X_{\mathbf{U}^*_{N}}(M;\tcl,X_\tcl)) - \hat{V}_{\infty}(\tcl,X^s(\tcl)) \vert + \vert \hat{V}_{\infty}(\tcl,X^s(\tcl)) \vert \\
        <& \vert \hat{J}_M(X_\tcl,\mathbf{U}^*_{N}) \vert + \vert \hat{V}_{\infty}(\tcl,X^s(\tcl)) \vert + \gamma_{V_\infty}(\delta).
    \end{split}
    \end{equation*}
    Furthermore, we know that $\hat{V}_{\infty}(\tcl,X^s(\tcl)) \leq 0$ since $\hat{\ell}(X^s(\tcl+\tol),U^s(\tcl+\tol)) = 0$ for all $\tol \in \N_0$ and by dissipativity for all $N \in \N$ we obtain
    \begin{equation*}
        \hat{V}_{N}(\tcl,X^s(\tcl)) \geq -\lambda(j,X^s(j)) - C^l_{\lambda} \geq - (C_{\lambda} + C^l_{\lambda})
    \end{equation*}
    with $C_{\lambda}$ from Assumption~\ref{ass:BoundedStorage_CheapReach}(i) which implies that $\vert \hat{V}_{\infty}(\tcl,X^s(\tcl)) \vert \leq C_{\lambda} + C^l_{\lambda}$.
    Moreover, by using the finite horizon DPP from Theorem~\ref{thm:FiniteDPP} and the cheap reachability on $\Bcl^d_r(\mathbf{X}^s)$ from Assumption~\ref{ass:BoundedStorage_CheapReach}(ii) we get
    \begin{equation*}
    \begin{split}
        \vert \hat{J}_M(X_\tcl,\mathbf{U}^*_{N}) \vert 
        &= \vert \hat{V}_N(\tcl,X_\tcl) - \hat{V}_{N-M}(\tcl+M,X_{\mathbf{U}^*_{N}}(M;\tcl,X_\tcl)) \vert \\
        &\leq \vert \hat{V}_N(\tcl,X_\tcl) \vert + \vert \hat{V}_{N-M}(\tcl+M,X_{\mathbf{U}^*_{N}}(M;\tcl,X_\tcl)) \vert \leq 2 C_V.
    \end{split}
    \end{equation*}
    Thus, the claim follows with $C_{V_{\infty}} := 2 C_V + C_{\lambda} + C^l_{\lambda} + \gamma_{V_{\infty}}(\delta) > 0$.
\end{proof}

Note that the finiteness of the shifted optimal value function from Lemma~\ref{lem:finiteV} in particular implies that the inequality
\begin{equation*}
    \hat{V}_{\infty}(\tcl,X_\tcl) = \sum_{\tol=0}^{\infty} \left( \ell(X_{\mathbf{U}_{\infty}^*}(\tol;\tcl,X_\tcl), U(\tol)) - \ell(\mathbf{X}^s,\mathbf{U}^s) \right) \leq C_{V_{\infty}}
\end{equation*}
holds for all $(\tcl,X_\tcl) \in \Bcl^d_r(\mathbf{X}^s)$, which can be interpreted as a cheap reachability condition on infinite horizon. 
Hence, we can show that the optimal control problem~\eqref{eq:stochOCPopenloop} also has the infinite-horizon turnpike property which concludes the preliminaries of this section.

\begin{lem} \label{lem:TurnpikeInfty}
    Let the Assumptions~\ref{ass:Dissipativity}, \ref{ass:BoundedStorage_CheapReach}, and \ref{ass:ContinuityVshifted} hold.
    Then, the stochastic optimal control problem~\eqref{eq:stochOCPopenloop} has the uniform stochastic finite-horizon turnpike property on $\Bcl^d_r(\mathbf{X}^s)$ with respect to the (pseudo)metric $d$.
\end{lem}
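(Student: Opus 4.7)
The statement, as literally worded, asks for the uniform stochastic \emph{finite-horizon} turnpike property on $\Bcl^d_r(\mathbf{X}^s)$, which is precisely the conclusion of Theorem~\ref{thm:Turnpike}. My plan is therefore to verify that the three hypotheses of that theorem are all already furnished by the present assumptions, and to quote the result.

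Concretely, I would first observe that Assumption~\ref{ass:Dissipativity} supplies strict stochastic dissipativity at $(\mathbf{X}^s,\mathbf{U}^s)$ with respect to $d$, producing some $\alpha\in\K_\infty$ and a storage function $\lambda$ bounded below by $-C^l_\lambda$ (by Definition~\ref{defn:stochDissi}). Next, Assumption~\ref{ass:BoundedStorage_CheapReach}(i) yields the uniform upper bound $C_\lambda$ on $\lambda$ over $\Bcl^d_r(\mathbf{X}^s)$, and Assumption~\ref{ass:BoundedStorage_CheapReach}(ii) is literally the cheap reachability hypothesis of Definition~\ref{defn:cheapReachability}, providing $C_V$ such that $V_N(\tcl,X_\tcl)-N\ell(\mathbf{X}^s,\mathbf{U}^s)\leq C_V$ on $\Bcl^d_r(\mathbf{X}^s)$. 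With these three ingredients, the dissipativity telescoping argument of Theorem~\ref{thm:Turnpike} applies verbatim and delivers the turnpike property with the explicit comparison function
\[
\vartheta(L) \;=\; \alpha^{-1}\!\left(\frac{C_\lambda + C^l_\lambda + C_V}{L}\right)
\]
from equation~\eqref{eq:vartheta}. There is no real obstacle to overcome; the only thing to check is that the exceptional set $\mathcal{Q}(\tcl,X_\tcl,L,N)\subseteq\{0,\ldots,N\}$ whose existence Theorem~\ref{thm:Turnpike} guarantees really satisfies $\#\mathcal{Q}\leq L$, which is immediate from the contradiction step in the proof of Theorem~\ref{thm:Turnpike} (if more than $L$ indices violated the bound, the lower estimate on $V_N - N\ell(\mathbf{X}^s,\mathbf{U}^s)$ would exceed $C_V$).

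Note that Assumption~\ref{ass:ContinuityVshifted} is not actually used in this finite-horizon conclusion; its purpose is the previous Lemma~\ref{lem:finiteV}, which upgrades the finite-horizon cheap reachability constant $C_V$ to an infinite-horizon constant $C_{V_\infty}$. Should the lemma be read as secretly targeting the infinite-horizon turnpike (as the label \texttt{lem:TurnpikeInfty} and the surrounding discussion of overtaking optimality suggest), the same proof would go through after replacing $C_V$ by $C_{V_\infty}$ and demanding the exceptional index set $\mathcal{Q}(\tcl,X_\tcl,L,\infty)\subseteq\N_0$ to have at most $L$ elements; the finiteness of $\hat V_\infty$ from Lemma~\ref{lem:finiteV} is exactly what prevents the exceptional set from becoming infinite when one telescopes over all of $\N_0$.
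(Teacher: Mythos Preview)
Your proposal is correct and matches the paper's own proof, which simply states that the argument follows analogously to the finite-horizon case of Theorem~\ref{thm:Turnpike}. You also correctly identify the apparent typo (the lemma is meant to establish the \emph{infinite}-horizon turnpike, as the label and surrounding discussion indicate) and supply exactly the right fix: replace $C_V$ by the infinite-horizon constant $C_{V_\infty}$ from Lemma~\ref{lem:finiteV} and run the same telescoping contradiction over $\N_0$.
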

\begin{proof}
    The proof follows analogously to the finite-horizon case from Lemma~\ref{thm:Turnpike}.
\end{proof}

\subsection{Non-averaged performance}

    First, we aim to obtain estimates for the non-averaged performance of our proposed MPC algorithm.
    To this end, we start with the following results, which shows that the closed-loop trajectory is also finite-horizon near-optimal compared to all other trajectories that have the same stability properties.

    \begin{thm} \label{thm:Performance5}
        Let the Assumptions~\ref{ass:Dissipativity}, \ref{ass:BoundedStorage_CheapReach}, \ref{ass:ContinuityV}, \ref{ass:modifiedStagecost}, \ref{ass:ContinuityModified}, and \ref{ass:ContinuityVshifted} hold.
        Consider $\alpha \in \K_{\infty}$ from Assumption~\ref{ass:Dissipativity}, $\delta \in \mathcal{L}$, $N_0 \in \N$ and $\theta_0 >0$ from Theorem~\ref{theorem2}, $\beta \in \K\mathcal{L}$ from \eqref{eq:stability}, $\nu \in \mathcal{L}$ from equation~\eqref{eq:upperBoundNu}, and for $\kappa \geq 0$ define
        \begin{equation}
            \UU_{ad}^{K,\kappa}(X_0) := \{ \mathbf{U} \in \UU_{ad}^K(0,X_0) \mid X_{\mathbf{U}}(K;0,X_0) \in \Bcl^d_{\kappa}(X^s(K)) \}.
        \end{equation}
        Then, there exist $\delta_2,\delta_3 \in \mathcal{L}$ such that for all sufficiently large $N,K \in \N$ and all $X_0 \in \tilde{V}_N^{-1}([0,\theta_0])$ the inequality 
        \begin{equation*} 
            J_K^{cl}(X_0,\mu_N) \leq \inf_{\mathbf{U} \in \UU_{ad}^{K,\kappa}(X_0)} J_K^{cl}(X_0,\mathbf{U}) + \delta_2(N) + K\nu(N) + \delta_3(K).
        \end{equation*}
        with $\kappa = \max \{ \beta(d(X_0,X^s(0)),K), \alpha^{-1}(\delta(N)) \}$.
    \end{thm}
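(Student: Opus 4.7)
The plan is to combine three ingredients: the semi-global practical stability of the closed loop from Corollary~\ref{cor:semiprac}, which places $X_{\mu_N}(K)$ in $\Bcl^d_\kappa(X^s(K))$; a Lyapunov-type upper bound on $J_K^{cl}(X_0,\mu_N)$ obtained by summing the one-step decrease of $\tilde V_N$ along the closed loop derived in the proof of Theorem~\ref{theorem2}; and a dissipativity-based lower bound on $J_K(X_0,\mathbf{U})$ valid for every competitor $\mathbf{U}\in\UU_{ad}^{K,\kappa}(X_0)$. Throughout I exploit that $X_0\in\tilde V_N^{-1}([0,\theta_0])$ implies $d(X_0,X^s(0))\le\alpha^{-1}(\theta_0)$, so that all trajectories involved remain in $\Bcl_r^d(\mathbf{X}^s)$ for $\theta_0$ small enough, and hence the estimates from Sections~\ref{sec:DissiTurnpike} and \ref{sec:Stability} apply uniformly.

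First I would sum the decrease inequality from equation~\eqref{eq:theorem2INeq},
\[\tilde V_N(\tcl+1,X_{\mu_N}(\tcl+1))\le\tilde V_N(\tcl,X_{\mu_N}(\tcl))-\tilde\ell(\tcl,X_{\mu_N}(\tcl),\mu_N(X_{\mu_N}(\tcl)))+\nu(N),\]
over $\tcl=0,\dots,K-1$ and telescope the storage-function pieces contained in $\tilde\ell$ to obtain
\[J_K^{cl}(X_0,\mu_N)\le\tilde V_N(0,X_0)-\tilde V_N(K,X_{\mu_N}(K))+K\ell(\mathbf X^s,\mathbf U^s)-\lambda(0,X_0)+\lambda(K,X_{\mu_N}(K))+K\nu(N).\]
Dropping $-\tilde V_N(K,X_{\mu_N}(K))\le 0$ and combining Assumption~\ref{ass:modifiedStagecost} (so $\lambda(K,X^s(K))=\lambda(0,X^s(0))$) with the continuity of $\lambda$ at the turnpike from Assumption~\ref{ass:ContinuityModified}(i) together with $d(X_{\mu_N}(K),X^s(K))\le\kappa$ yields an upper bound on $J_K^{cl}(X_0,\mu_N)$ featuring only $\tilde V_N(0,X_0)$, the deterministic offset $K\ell(\mathbf X^s,\mathbf U^s)-\lambda(0,X_0)+\lambda(0,X^s(0))$, a $\gamma_\lambda(\kappa)$-term, and $K\nu(N)$.

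Second, for any $\mathbf U\in\UU_{ad}^{K,\kappa}(X_0)$ the telescoping identity
\[\tilde J_K(0,X_0,\mathbf U)=J_K(X_0,\mathbf U)-K\ell(\mathbf X^s,\mathbf U^s)+\lambda(0,X_0)-\lambda(K,X_{\mathbf U}(K))\]
together with $\tilde V_K(0,X_0)\le\tilde J_K(0,X_0,\mathbf U)$ and $\lambda(K,X_{\mathbf U}(K))\ge\lambda(0,X^s(0))-\gamma_\lambda(\kappa)$ produces a matching lower bound on $J_K(X_0,\mathbf U)$. Subtracting it from the upper bound of the previous step, the common $K\ell(\mathbf X^s,\mathbf U^s)$, $\lambda(0,X_0)$ and $\lambda(0,X^s(0))$ contributions cancel and I obtain the uniform estimate
\[J_K^{cl}(X_0,\mu_N)-J_K(X_0,\mathbf U)\le\bigl[\tilde V_N(0,X_0)-\tilde V_K(0,X_0)\bigr]+2\gamma_\lambda(\kappa)+K\nu(N).\]

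Finally, the remaining two error terms must be repackaged as $\delta_2(N)+\delta_3(K)$. Since $\tilde\ell\ge 0$, $N\mapsto\tilde V_N$ is monotone non-decreasing and bounded above by $\tilde V_\infty$, and the analogue of Lemma~\ref{lem:finiteV} for $\tilde V_\infty$ together with Lemma~\ref{lem:TurnpikeInfty} shows $\tilde V_\infty(0,X_0)-\tilde V_K(0,X_0)\to 0$ as $K\to\infty$ uniformly for $X_0\in\tilde V_N^{-1}([0,\theta_0])$, which defines an $\mathcal L$-function $\delta_3^{(1)}(K)$. Moreover $\kappa\le\max\{\beta(\alpha^{-1}(\theta_0),K),\alpha^{-1}(\delta(N))\}$, so monotonicity of $\gamma_\lambda$ gives $2\gamma_\lambda(\kappa)\le 2\gamma_\lambda(\beta(\alpha^{-1}(\theta_0),K))+2\gamma_\lambda(\alpha^{-1}(\delta(N)))$, splitting cleanly into an $\mathcal L(N)$ piece absorbed into $\delta_2(N)$ and an $\mathcal L(K)$ piece absorbed into $\delta_3(K)$ alongside $\delta_3^{(1)}(K)$. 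Taking the infimum over $\mathbf U$ on the right-hand side, which is now independent of $\mathbf U$, completes the proof. The main obstacle is precisely the uniform vanishing of $\tilde V_\infty-\tilde V_K$ on the sublevel set $\tilde V_N^{-1}([0,\theta_0])$ independently of $N$; establishing this rate requires combining infinite-horizon cheap reachability (encoded in the modified analogue of Lemma~\ref{lem:finiteV}) with the turnpike decay of tail costs from Lemma~\ref{lem:TurnpikeInfty}.
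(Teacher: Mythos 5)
Your first and third ingredients match the paper: you sum the one-step decrease \eqref{eq:theorem2INeq} to get exactly the paper's bound \eqref{eq:upperBoundModifiedCost}, and your repackaging of the $\gamma_\lambda(\kappa)$-terms into $\delta_2(N)+\delta_3(K)$ via the two branches of the $\max$ defining $\kappa$ is precisely what the paper does. The divergence is in the lower bound for the competitor, and that is where your argument has a genuine gap. You bound $\tilde J_K(0,X_0,\mathbf U)$ from below by $\tilde V_K(0,X_0)$, i.e.\ by the optimal value \emph{at horizon $K$}, and are then left with the mismatch $\tilde V_N(0,X_0)-\tilde V_K(0,X_0)$. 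For $K\geq N$ this is $\leq 0$ by monotonicity, but for $K<N$ (which the theorem must cover, since $N$ and $K$ are independent) you need a uniform $\mathcal L$-rate for $\tilde V_\infty(0,X_0)-\tilde V_K(0,X_0)$ over the whole sublevel set, and this does not follow from the results you cite: Lemma~\ref{lem:finiteV} and Lemma~\ref{lem:TurnpikeInfty} concern the \emph{shifted} problem $\hat V$, not the modified problem $\tilde V$, and monotone convergence gives no rate. A rate can in fact be extracted (insert $\tilde V_\infty(M,X_{\tilde{\mathbf U}_K^*}(M))$ at a non-exceptional time $M\leq K$ from Lemma~\ref{lem:TurnpikeModified}, bound it by $\gamma_{\tilde V}(\vartheta(\lfloor K/2\rfloor))$ using Assumptions~\ref{ass:modifiedStagecost} and \ref{ass:ContinuityModified}(ii) passed to the limit $N\to\infty$, and drop the nonnegative tail of $\tilde J_K$), but you assert rather than carry out this step, and you name the wrong lemmas for it.

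The paper avoids this issue entirely by a different, and cleaner, use of the terminal constraint $X_{\mathbf U}(K)\in\Bcl^d_\kappa(X^s(K))$: for $K\leq N$ it writes
\[
\tilde J_K(X_0,\mathbf U)=\tilde J_K(X_0,\mathbf U)+\tilde V_{N-K}(K,X_{\mathbf U}(K))-\tilde V_{N-K}(K,X_{\mathbf U}(K))\geq \tilde V_N(0,X_0)-\gamma_{\tilde V}(\kappa),
\]
using the dynamic programming principle for the first inequality and $\tilde V_{N-K}(K,X^s(K))=0$ together with Assumption~\ref{ass:ContinuityModified}(ii) for the second; for $K\geq N$ nonnegativity of $\tilde\ell$ gives $\tilde J_K(X_0,\mathbf U)\geq\tilde V_N(0,X_0)$ directly. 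This compares the competitor against $\tilde V_N$ itself, so no $\tilde V_N-\tilde V_K$ term ever appears and no infinite-horizon machinery for the modified problem is needed. If you want to keep your decomposition, you must close the $K<N$ case along the lines sketched above; otherwise, adopt the paper's DPP insertion.
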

    \begin{proof}
        Consider $X_\tcl \in \tilde{V}_N^{-1}(\tcl,[0,\theta_0])$ and set $X^+_\tcl = f(X\tcl,\mu_N(X_\tcl),W(\tcl))$.
        Then, from equation~\eqref{eq:theorem2INeq} we obtain for all $N \geq N_0$ the identity 
        \begin{align*}
           \tilde{\ell}(\tcl, X_\tcl, \mu_N(X_\tcl)) =& \tilde{V}_N(\tcl,X_\tcl) - \tilde{V}_{N}(\tcl+1,X^+_\tcl) - R_2(\tcl+1,X^+_\tcl,M-1,N-1) \nonumber \\
            & - R_4(\tcl+1,X^+_\tcl,M-1,N-1) + R_4(\tcl,X_\tcl,M,N).
        \end{align*}
        with $\vert R_4(\tcl,X_\tcl,M,N)- R_4(\tcl+1,X^+_\tcl,M-1,N-1)- R_2(\tcl+1,X^+_\tcl,M-1,N-1) \vert \leq \nu(N)$
        and $\nu$ from equation~\eqref{eq:upperBoundNu}.
        Hence, using that $\tilde{V}_N^{-1}(\tcl,[0,\theta_0])$ is a forward invariant set for the closed -loop system and $X_0 \in \tilde{V}_N^{-1}(0,[0,\theta_0])$ holds, summing the cost along the closed-loop trajectory yields
        \begin{equation} \label{eq:upperBoundModifiedCost}
            \sum_{\tcl=0}^{K-1} \tilde{\ell}(X_{\mu_N}(\tcl),\mu_N(X_{\mu_N}(\tcl))) \leq \tilde{V}_N(0,X_0) - \tilde{V}_N(K,X_{\mu_N}(K)) + K \nu(N).
        \end{equation}
        Now consider $N,K \in \N$ such that $\max \{ \beta(d(X_0,X^s(0)),K), \alpha^{-1}(\delta(N)) \} \leq \tilde{\eps}$ holds with $\tilde{\eps}$ from Assumption~\ref{ass:ContinuityModified}(ii). 
        Thus, if $K \leq N$ holds, we can conclude by Assumption~\ref{ass:ContinuityModified} that
        \begin{equation*} 
        \begin{split}
            \tilde{J}_K(X_0,\mathbf{U}) =& \tilde{J}_K(X_0,\mathbf{U}) + \tilde{V}_{N-K}(K,X_{\mathbf{U}}(K)) - \tilde{V}_{N-K}(K,X_{\mathbf{U}}(K)) 
            \geq \tilde{V}_N(X_0) - \gamma_{\tilde{V}}(\kappa)
        \end{split}
        \end{equation*}
        holds for all $\mathbf{U} \in \UU_{ad}^{K,\kappa}(X_0)$.
        On the other hand, if $K \geq N$ it holds by the non-negativity of $\tilde{\ell}$ that $\tilde{J}_K(X_0,\mathbf{U}) \geq \tilde{V}_N(X_0)$.
        Hence, using Assumption~\ref{ass:ContinuityModified}(i) we obtain
        \begin{equation*} 
        \begin{split}
            J_K^{cl}(X_0,\mu_N) =& \sum_{\tcl=0}^{K-1} \tilde{\ell}(\tcl, X_{\mu_N}(\tcl), \mu_N(X_{\mu_N}(\tcl))) - \lambda(0,X_0) + \lambda(K,X_{\mu_N}(K)) + \ell(\mathbf{X}^s,\mathbf{U}^s)\\
            \leq& \tilde{V}_N(0,X_0) - \tilde{V}_N(K,X_{\mu_N}(K)) + K \nu(N) \\
            &- \lambda(0,X_0) + \lambda(K,X_{\mu_N}(K)) + \ell(\mathbf{X}^s,\mathbf{U}^s)\\
            \leq& \tilde{J}_K(X_0,\mathbf{U}) + \gamma_{\tilde{V}}(\kappa) - \tilde{V}_N(K,X_{\mu_N}(K)) + K \nu(N) \\
            &- \lambda(0,X_0) + \lambda(K,X_{\mu_N}(K)) + \ell(\mathbf{X}^s,\mathbf{U}^s) \\
            \leq& J_K(X_0,\mathbf{U}) + \gamma_{\tilde{V}}(\kappa) - \tilde{V}_N(K,X_{\mu_N}(K)) + K \nu(N) \\
            &- \lambda(X_{\mathbf{U}}(K)) + \lambda(K,X_{\mu_N}(K)) \\
            \leq& J_K(X_0,\mathbf{U}) + \gamma_{\tilde{V}}(\kappa) + K \nu(N) + 2 \gamma_{\lambda}(\kappa).
        \end{split}
        \end{equation*}
        Using the definition of $\kappa$ and $X_{\mu_N}(\tcl) \in \tilde{V}_N^{-1}([0,\theta_0]) \subseteq \Bcl_r^d(X^s(\tcl))$ we can estimate
        \begin{equation*}
            \gamma_{\tilde{V}}(\kappa) + 2 \gamma_{\lambda}(\kappa) \leq \delta_2(N) + \delta_3(K)
        \end{equation*}
        with $\delta_2(N) := \gamma_{\tilde{V}}(\alpha^{-1}(\delta(N))) + 2 \gamma_{\lambda}(\alpha^{-1}(\delta(N)))$ and $\delta_3(K) := \gamma_{\tilde{V}}(\beta(r,K)) + 2 \gamma_{\lambda}(\beta(r,K))$, which proves the theorem.
    \end{proof}

    Using Theorem~\ref{thm:Performance5} and the infinite-horizon turnpike property from Lemma~\ref{lem:TurnpikeInfty} we can additionally show that the closed-loop solution is approximately overtaking optimal, which characterizes the performance of our stochastic MPC scheme on the infinite-horizon.
    
    \begin{coro}
        Let the Assumptions~\ref{ass:Dissipativity}, \ref{ass:BoundedStorage_CheapReach}, \ref{ass:ContinuityV}, \ref{ass:ContinuityModified}, \ref{ass:modifiedStagecost}, and \ref{ass:ContinuityVshifted} hold.
        Consider $\nu \in \mathcal{L}$ from equation~\eqref{eq:upperBoundNu} and $\delta_2 \in \mathcal{L}$ from Theorem~\ref{thm:Performance5}.
        Then, for all sufficiently large $N$ and all $X_0 \in \tilde{V}^{-1}_N(0,[0,\theta_0])$ the inequality 
        \begin{equation*}
        \begin{split}
            \liminf_{K \rightarrow \infty} \bigg( \sum_{\tcl=0}^{K-1} \Big(\ell(&X_{\mathbf{U}}(\tcl;0,X_0),U(\tcl))
            - \ell(X_{\mu_N}(\tcl;0,X_0),\mu_N(X_{\mu_N}(\tcl))) \Big)
            + K \nu(N) + \delta_2(N) \bigg) \geq 0
        \end{split}
        \end{equation*}
        holds for all $\mathbf{U} \in \UU^{\infty}_{ad}(X_0)$.
        \label{cor:overtaking}
    \end{coro}
    \begin{proof}
        Choose, $N,K \in \N$ large enough such that Theorem~\ref{thm:Performance5} holds and consider the infinite-horizon optimal control sequence $\mathbf{U}^*_{\infty}$. 
        Due to the infinite-horizon turnpike property, cf.\ Lemma~\ref{lem:TurnpikeInfty}, we can conclude that there exist a $K_0 \geq K$ such that $d(X_{\mathbf{U}^*_{\infty}}(\tilde{K};0,X_0),X^s(\tilde{K})) \leq \alpha^{-1}(\delta(N))$ and $\beta(d(X_0,X^s(0)),\tilde{K}) \leq \alpha^{-1}(\delta(N))$ holds for all $\tilde{K} \geq K_0$.
        Hence, we obtain that $\mathbf{U}^*_{\infty} \in \UU^{\tilde{K},\kappa}_{ad}$ holds and we can apply Theorem~\ref{thm:Performance5} to obtain
        \begin{equation*} 
            J_K^{cl}(X_0,\mu_N) \leq J_K^{cl}(X_0,\mathbf{U}^*_{\infty}) + \delta_2(N) + K\nu(N) + \delta_3(K).
        \end{equation*}
        Since $\delta_3(K) \to 0$ for $K \to \infty$, taking the limit yields
        \begin{equation*}
        \begin{split}
            \liminf_{K \rightarrow \infty} \bigg( \sum_{\tcl=0}^{K-1} \Big(\ell(&X_{\mathbf{U}^*_{\infty}}(\tcl),U^*_{\infty}(\tcl)) 
            - \ell(X_{\mu_N}(\tcl),\mu_N(X_{\mu_N}(\tcl))) \Big) + K \nu(N) + \delta_2(N) \bigg) \geq 0.
        \end{split}
        \end{equation*}
        Because of the optimality of $\mathbf{U}^*_{\infty}$ we can further conclude that 
        \begin{equation*}
        \begin{split}
            \liminf_{K \rightarrow \infty} \bigg( \sum_{\tcl=0}^{K-1} \Big(\ell(X_{\mathbf{U}}(\tcl),U(\tcl))- \ell(X_{\mathbf{U}^*_{\infty}}(\tcl),U^*_{\infty}(\tcl)) \Big) \bigg) \geq 0
        \end{split}
        \end{equation*}
        holds for all $\mathbf{U} \in \UU^{\infty}_{ad}(X_0)$, which proves the claim.
    \end{proof}

    \begin{rem}
        One might be worried by the $K$-dependence of the error term $K\nu(N)$ in Corollary~\ref{cor:overtaking}, because this implies that the deviation from the optimal cost grows linearly in $K$. 
        However, as the accumulated non-shifted cost itself grows linearly in $K$ (except in the very particular case when the stationary cost $\ell(\mathbf{X}^s,\mathbf{U}^s)$ equals $0$), the relative deviation to the non-shifted optimal cost is constant in $K$. 
    \end{rem}

\subsection{Averaged performance}

    After obtaining an estimate for the non-averaged performance in Corollary~\ref{cor:overtaking},
    we can use the stability results from Theorem~\ref{thm:stabilityLyapunov} to obtain bounds for the averaged closed-loop performance, 
    defined by 
    \begin{equation*}
        \bar{J}^{cl}_{K}(X,\mu_N) :=  \frac{1}{K} J_K^{cl}(X,\mu_N).
    \end{equation*}
    For this purpose, we first implement a lower bound on the average performance which is valid for all admissible control sequences.

    \begin{lem} \label{lem:optimalOperation}
        Let Assumption~\ref{ass:Dissipativity} hold.
        Then, the inequality
        \begin{equation*}
            \liminf_{K \to \infty} \frac{1}{K} \sum_{\tcl=0}^{K-1} \ell(X_{\mathbf{U}}(\tcl;0,X_0), U(\tcl)) \geq \ell(\mathbf{X}^s,\mathbf{U}^s)
        \end{equation*}
        holds for all $X_0 \in \RR{\Omega,\X}$ and $\mathbf{U} \in \UU_{ad}^{\infty}(X_0)$.
    \end{lem}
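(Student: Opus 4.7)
The plan is to use the strict dissipativity inequality from Assumption~\ref{ass:Dissipativity} as a telescoping estimate along the trajectory. Recall that strict dissipativity gives, for every admissible state-control pair $(X,U)$,
\begin{equation*}
    \ell(X,U) - \ell(\mathbf{X}^s,\mathbf{U}^s) + \lambda(\tcl,X) - \lambda(\tcl+1,f(X,U,W(\tcl))) \geq \alpha(d(X,X^s(\tcl))) \geq 0,
\end{equation*}
and that $\lambda$ is uniformly bounded from below by some constant $-C^l_\lambda$.

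The first step I would take is to fix $X_0 \in \mathcal{R}(\Omega,\X)$ and $\mathbf{U} \in \UU_{ad}^{\infty}(X_0)$, evaluate the dissipativity inequality at $(X_{\mathbf{U}}(\tcl;0,X_0),U(\tcl))$ for each $\tcl = 0, \ldots, K-1$, and sum. Since the consecutive storage terms telescope along the closed-loop trajectory, this yields
\begin{equation*}
    \sum_{\tcl=0}^{K-1} \ell(X_{\mathbf{U}}(\tcl;0,X_0),U(\tcl)) - K\,\ell(\mathbf{X}^s,\mathbf{U}^s) + \lambda(0,X_0) - \lambda(K,X_{\mathbf{U}}(K;0,X_0)) \geq 0.
\end{equation*}

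Rearranging and invoking the uniform lower bound $\lambda(K,\cdot) \geq -C^l_\lambda$, I obtain
\begin{equation*}
    \sum_{\tcl=0}^{K-1} \ell(X_{\mathbf{U}}(\tcl;0,X_0),U(\tcl)) \geq K\,\ell(\mathbf{X}^s,\mathbf{U}^s) - \lambda(0,X_0) - C^l_\lambda.
\end{equation*}
Dividing both sides by $K$ and letting $K \to \infty$ causes the bounded term $(\lambda(0,X_0) + C^l_\lambda)/K$ to vanish, which directly yields the claim.

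There is essentially no hard part: the argument is a one-line telescoping computation, and the only subtlety is observing that the final storage value $\lambda(K,X_{\mathbf{U}}(K;0,X_0))$ is uniformly bounded below (so that we do not need any assumption on where the trajectory ends up, and in particular no cheap reachability or turnpike property is needed for this lower bound).
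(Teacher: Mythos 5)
Your proof is correct and follows essentially the same route as the paper: both sum the dissipation inequality along the trajectory, telescope the storage terms, and use the uniform lower bound on $\lambda$ (which is indeed part of Definition~\ref{defn:stochDissi}) before dividing by $K$ and passing to the limit. Your closing observation that no cheap reachability or turnpike property is needed is also consistent with the paper, which assumes only Assumption~\ref{ass:Dissipativity} for this lemma.
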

    \begin{proof}
    See Appendix~\ref{app}. For a similar proof in the deterministic case see also \cite[Proposition~6.4]{Angeli2012}.
    \end{proof}

    The property from Lemma~\ref{lem:optimalOperation} is also called optimal operation and shows that the asymptotic average performance cannot be better than that of the stationary pair.
    Based on this property, we can now show that the average performance of the closed-loop solution can come arbitrarily close to this lower bound, depending on the optimization horizon $N$.

    \begin{thm} \label{thm:AvgPerformance}
        Let the Assumptions~\ref{ass:Dissipativity}, \ref{ass:BoundedStorage_CheapReach}, \ref{ass:ContinuityV}, \ref{ass:ContinuityModified}, \ref{ass:modifiedStagecost}, and \ref{ass:ContinuityVshifted} hold.
        Then, for all $N \in \N$ sufficiently large and all $X_0 \in \tilde{V}_N^{-1}(0,[0,\theta_0])$ the averaged closed-loop cost satisfies
        \begin{equation*}
            \ell(\mathbf{X}^s,\mathbf{U}^s) \leq 
            \limsup_{K \to \infty} \bar{J}^{cl}_{K}(X_0,\mu_N)
            \leq \ell(\mathbf{X}^s,\mathbf{U}^s) + \nu(N).
        \end{equation*}
        with $\nu(N)$ from equation~\eqref{eq:upperBoundNu} and $\theta_0>0$ from Theorem~\ref{theorem2}.
    \end{thm}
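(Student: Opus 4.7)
The plan is to handle the two inequalities separately. The lower bound follows immediately: since the closed-loop control generated by $\mu_N$ applied to \eqref{eq:stochSysTimeVarying} yields an admissible infinite-horizon control sequence from $X_0$, Lemma~\ref{lem:optimalOperation} directly gives
$$\liminf_{K \to \infty} \bar{J}^{cl}_K(X_0, \mu_N) \geq \ell(\mathbf{X}^s, \mathbf{U}^s),$$
and a fortiori $\limsup_{K \to \infty} \bar{J}^{cl}_K(X_0, \mu_N) \geq \ell(\mathbf{X}^s, \mathbf{U}^s)$.

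For the upper bound, I would start from inequality~\eqref{eq:upperBoundModifiedCost} which was established in the proof of Theorem~\ref{thm:Performance5} along the MPC closed-loop trajectory, namely
$$\sum_{\tcl=0}^{K-1} \tilde{\ell}(\tcl, X_{\mu_N}(\tcl), \mu_N(X_{\mu_N}(\tcl))) \leq \tilde{V}_N(0, X_0) - \tilde{V}_N(K, X_{\mu_N}(K)) + K \nu(N),$$
valid for all $X_0 \in \tilde{V}_N^{-1}(0,[0,\theta_0])$ and all sufficiently large $N$. Unfolding the definition of $\tilde{\ell}$ from~\eqref{eq:DissiIneq}, the storage-function contributions on the left telescope, so the left-hand side equals
$$J^{cl}_K(X_0, \mu_N) - K\,\ell(\mathbf{X}^s, \mathbf{U}^s) + \lambda(0, X_0) - \lambda(K, X_{\mu_N}(K)).$$
Rearranging and dividing by $K$ yields
$$\bar{J}^{cl}_K(X_0, \mu_N) \leq \ell(\mathbf{X}^s, \mathbf{U}^s) + \nu(N) + \frac{1}{K}\Bigl( \tilde{V}_N(0, X_0) - \tilde{V}_N(K, X_{\mu_N}(K)) - \lambda(0, X_0) + \lambda(K, X_{\mu_N}(K)) \Bigr).$$

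The remaining task is to show that the $O(1/K)$ correction vanishes as $K \to \infty$, which is the main technical point. By Theorem~\ref{theorem2} the sublevel set $\mathcal{Y}(\tcl) = \tilde{V}_N^{-1}(\tcl,[0,\theta_0])$ is forward invariant for the closed-loop system, so $X_{\mu_N}(K) \in \mathcal{Y}(K)$ for every $K$. By choosing $N$ large enough (adjusting $\theta_0$ downward if necessary so that $\theta_0 \leq \alpha(r)$ with $r$ from Assumption~\ref{ass:BoundedStorage_CheapReach}), the lower bound on $\tilde{V}_N$ from~\eqref{eq:lowerBoundLyap} ensures $\mathcal{Y}(\tcl) \subseteq \Bcl^d_r(X^s(\tcl))$, so Assumption~\ref{ass:BoundedStorage_CheapReach}(i) provides a uniform bound $|\lambda(K, X_{\mu_N}(K))| \leq C_\lambda$. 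Likewise $0 \leq \tilde{V}_N(K, X_{\mu_N}(K)) \leq \theta_0$ by forward invariance and the non-negativity of $\tilde{\ell}$. Therefore every term in the parenthesis is bounded uniformly in $K$, and the whole correction vanishes when divided by $K$. Taking $\limsup_{K \to \infty}$ gives
$$\limsup_{K \to \infty} \bar{J}^{cl}_K(X_0, \mu_N) \leq \ell(\mathbf{X}^s, \mathbf{U}^s) + \nu(N),$$
which together with the lower bound proves the claim.
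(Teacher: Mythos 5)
Your proof is correct and follows essentially the same route as the paper: both arguments start from inequality~\eqref{eq:upperBoundModifiedCost}, unwind the storage-function telescope in $\tilde{\ell}$ to recover $J^{cl}_K$, and then show the remaining terms are bounded uniformly in $K$ so that they vanish after dividing by $K$. The only (harmless) difference is that you bound $\lambda(K,X_{\mu_N}(K))$ via forward invariance of the sublevel set $\tilde V_N^{-1}(\tcl,[0,\theta_0])\subseteq \Bcl^d_r(X^s(\tcl))$ together with Assumption~\ref{ass:BoundedStorage_CheapReach}(i), whereas the paper invokes the closed-loop stability estimate to place $X_{\mu_N}(K)$ in $\Bcl^d_{\tilde\eps}(X^s(K))$ and then uses the continuity of $\lambda$ from Assumption~\ref{ass:ContinuityModified}(i); and you make explicit the lower bound via Lemma~\ref{lem:optimalOperation}, which the paper leaves implicit.
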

    \begin{proof}
        Using equation~\eqref{eq:upperBoundModifiedCost} we can conclude that
        \begin{equation} \label{eq:eq1averagedPerformance}
            \begin{split}
                \hat{J}_K^{cl}(X_0,\mu_N) =& \sum_{\tcl=0}^{K-1} \tilde{\ell}(\tcl, X_{\mu_N}(\tcl), \mu_N(X_{\mu_N}(\tcl))) - \lambda(0,X_0) + \lambda(K,X_{\mu_N}(K)) \\
                \leq& \tilde{V}_N(0,X_0) - \tilde{V}_N(K,X_{\mu_N}(K)) + K \nu(N) - \lambda(0,X_0) + \lambda(K,X_{\mu_N}(K)) 
            \end{split}
        \end{equation}
        holds. 
        Now consider $N \in \N$ large enough such that $\alpha^{-1}(\delta(N)) \leq \tilde{\eps}$ holds with $\tilde{\eps}$ from Assumption~\ref{ass:ContinuityModified}(ii) and choose $K_0$ such that $\beta(d(X_0,X^s(0)),K_0) \leq \alpha^{-1}(\delta(N))$ holds.
        Then, by the stability property from Theorem~\ref{thm:stabilityLyapunov} for all $K \geq K_0$ we can conclude that $X_{\mu_N}(K) \in \Bcl_{\tilde{\eps}}^d(X^s(\tcl))$.
        Using equation~\eqref{eq:eq1averagedPerformance}, Assumption~\ref{ass:BoundedStorage_CheapReach} and \ref{ass:ContinuityModified}(ii), and the non-negativity of $\tilde{\ell}$ this implies
        \begin{equation*} 
            \begin{split}
                \hat{J}_K^{cl}(X_0,\mu_N)
                \leq& \left( \tilde{V}_N(0,X_0) - \tilde{V}_N(K,X_{\mu_N}(K)) + K \nu(N) - \lambda(0,X_0) + \lambda(K,X_{\mu_N}(K)) \right)\\
                \leq&  \left( \theta_0 + K \nu(N) - \lambda(0,X_0) + \lambda(K,X^s(K)) + \gamma_{\lambda}(\tilde{\eps}) \right) \\
                \leq& \theta_0 + C^l_{\lambda} + C_{\lambda} + \gamma_{\lambda}(\tilde{\eps}) + K\nu(N),
            \end{split}
        \end{equation*}
        where $-C^l_{\lambda}$ is uniform lower bound on the storage function $\lambda$.
        Dividing by $K$ and taking the $\limsup_{K \to \infty}$ then yields 
        \begin{equation}
            \begin{split}
                \limsup_{K \to \infty} \bar{J}^{cl}_{K}(X_0,\mu_N) =& \limsup_{K \to \infty} \frac{1}{K} \hat{J}_K^{cl}(X_0,\mu_N) - \ell(\mathbf{X}^s,\mathbf{U}^s) \\
                \leq& \limsup_{K \to \infty} \frac{\theta_0 + C^l_{\lambda} + C_{\lambda} + \gamma_{\lambda}(\tilde{\eps})}{K} + \nu(N) \\
                =& \nu(N),
            \end{split}
        \end{equation}
        which proves the claim.
    \end{proof}

It should be noted that the non-averaged and averaged performance estimates obtained in this section are directly derived from the stability properties of the closed-loop solution proven in Section~\ref{sec:Stability}.
As a result, the stability characterization from Theorem~\ref{thm:stabilityLyapunov} and the performance estimates from this section use the same function $\nu \in \K_{\infty}$ to define the error terms.
However, it is also possible to derive near-optimal performance bounds without first proving stability.
For such an approach we refer to \cite{Schiessl2024b}, where near-optimality is shown based on turnpike properties and a slightly different and more restrictive optimal operation condition than the one from Lemma~\ref{lem:optimalOperation}.

\section{Numerical simulations} \label{sec:numerics}
To illustrate our theoretical findings, we consider the one-dimensional nonlinear optimal control problem
\begin{equation} \label{eq:example}
\begin{split}
\min_{\mathbf{U} \in \U^{N}(X_0)} J_N(X_0,\mathbf{U}) &:= \sum_{k=0}^{N-1} \Exp{X(k)^2 + \gamma U(k)^2} \\
s.t. ~ X(k+1) &= (U(k)-X(k))^2 + W(k)
\end{split}
\end{equation}
where $\gamma =25$ is a regularization parameter and $W(k)$ follows a two-point distribution such that $W(k) = a := 1$ with probability $p_a = 0.7$ and $W(k) = b := 0.25$ with probability $p_b = 0.3$.
Note that this problem is a modification of the example from \cite[Section~IV]{Schiessl2025} for which strict $L^2$ dissipativity was shown analytically. 
More precisely problem \eqref{eq:example} uses the same dynamics, but slightly different stage costs.
Although for this stage cost we cannot theoretically guarantee stochastic dissipativity anymore, we can still observe stochastic turnpikes numerically, cf.\ Figure~\ref{fig:turnpikes}.
There, we can see that all possible realization paths are close to each other in the middle of the optimization horizon, indicating a pathwise---and thus also a distributional---turnpike property.

\begin{figure}[ht]
    \begin{minipage}{0.49\textwidth}
        \centering
        \includegraphics[width=0.7\textwidth]{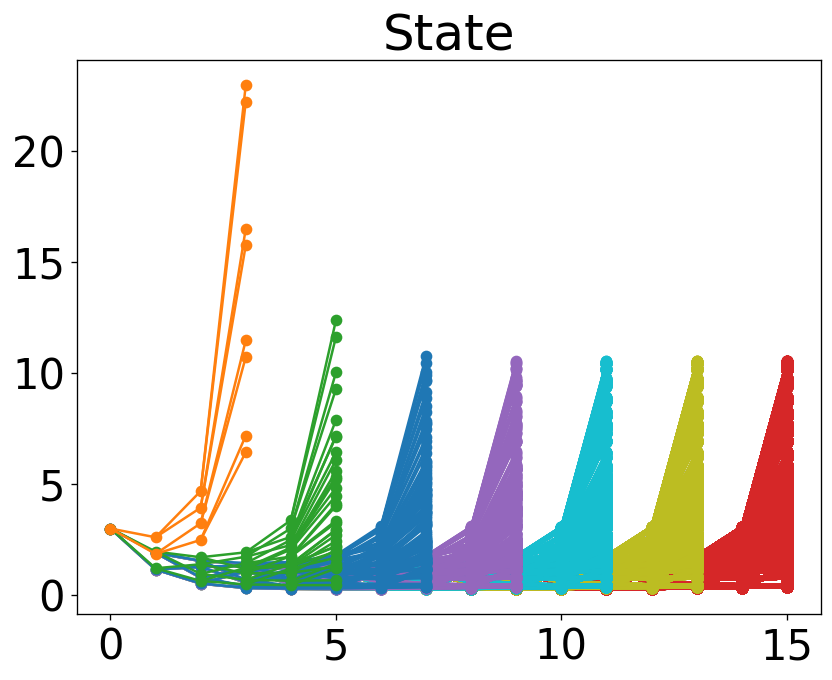}
    \end{minipage}
    \begin{minipage}{0.49\textwidth}
        \centering
        \includegraphics[width=0.7\textwidth]{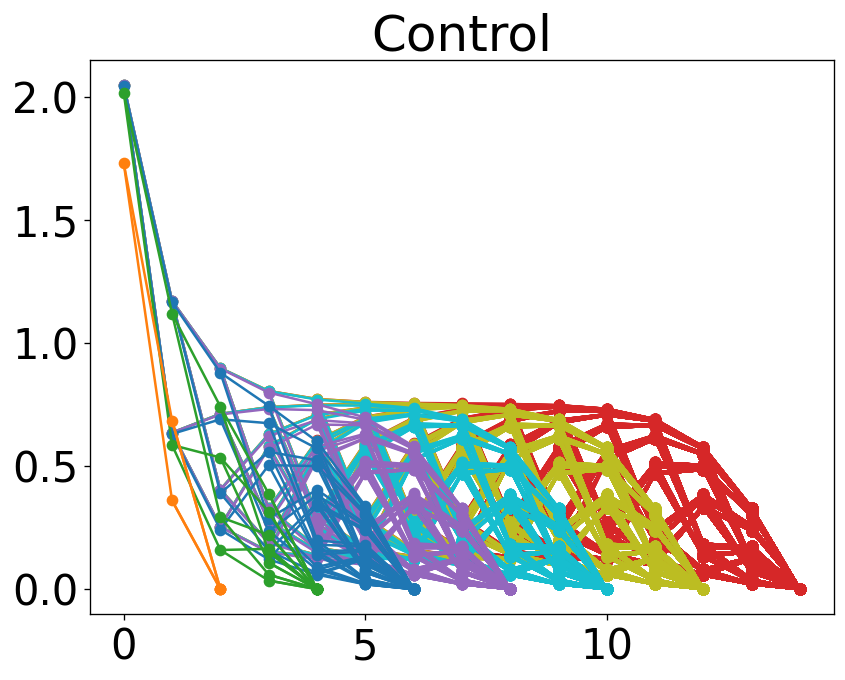}
    \end{minipage}
    \caption{Evolution of all possible realization paths for the optimal trajectories (left) and controls (right) for $N=3,5,7,\ldots,15$.}
    \label{fig:turnpikes}
\end{figure}

Hence, based on our theoretical findings, we expect to observe stability both in a pathwise sense and in distribution.
To this end, we generated $20000$ sample paths of the closed-loop solution up to time $K=150$ using Algorithm~\ref{alg:implementableStochMPC} with the initial condition $X_0 = 3$.
The optimal control problems arising in each MPC iteration are solved by direct minimization over all possible realization paths, which is feasible since the image of the noise process $W(k)$ is finite.
Furthermore, to estimate the characteristics of the stationary stochastic process, we employed the values attained at the middle of the horizon by the solution on horizon $N=15$, shown in red in Figure~\ref{fig:turnpikes}, as suggested by the turnpike property.

Figure~\ref{fig:stability_paths} shows the sample paths of the closed-loop solutions together with the upper and lower bounds of the stationary process realizations. As the horizon increases, the paths converge toward the region of the stationary process realizations, thereby emphasizing the practical asymptotic stability established in Theorem~\ref{theorem2}.

\begin{figure}[ht]
    \begin{minipage}{0.32\textwidth}
        \centering
        \includegraphics[width=0.95\textwidth]{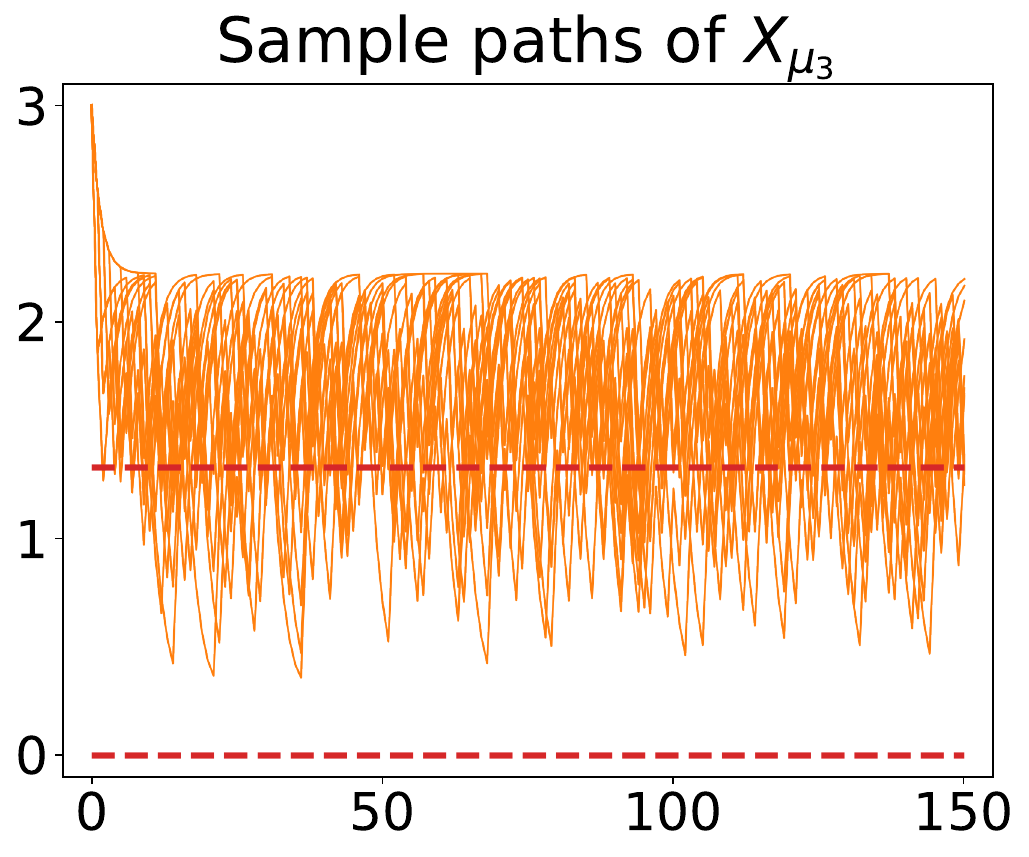}
    \end{minipage}
    \begin{minipage}{0.32\textwidth}
        \centering
        \includegraphics[width=0.95\textwidth]{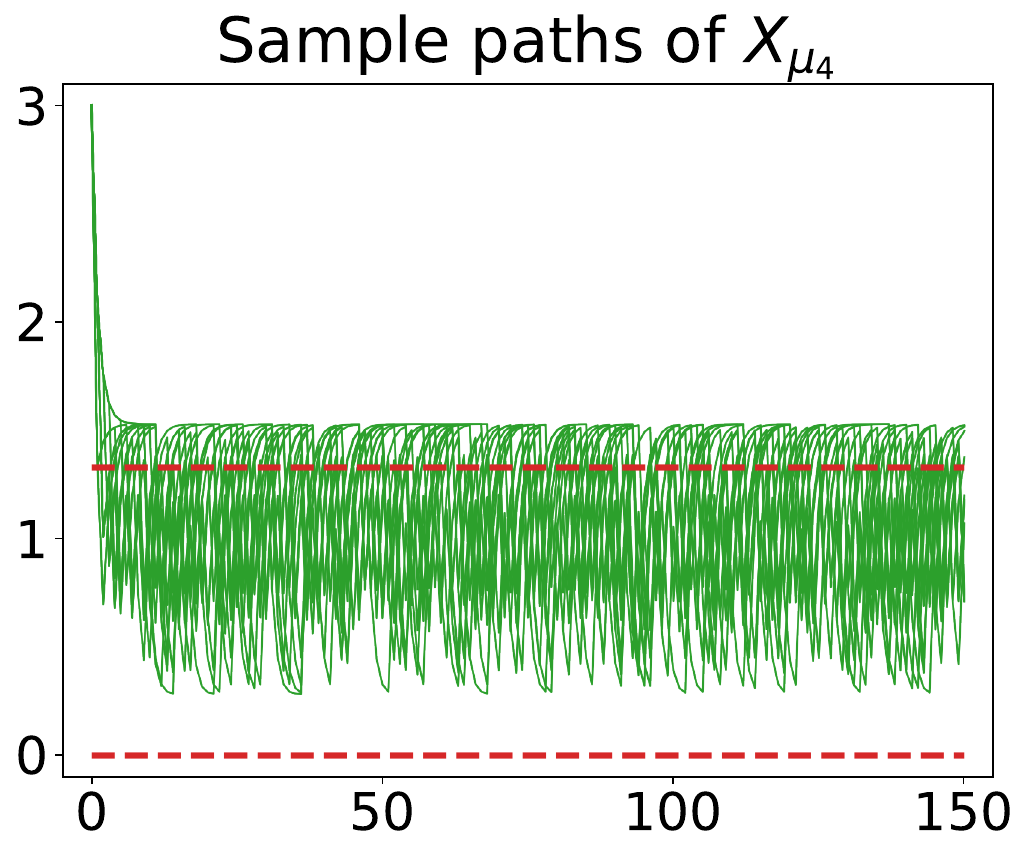}
    \end{minipage}
    \begin{minipage}{0.32\textwidth}
        \centering
        \includegraphics[width=0.95\textwidth]{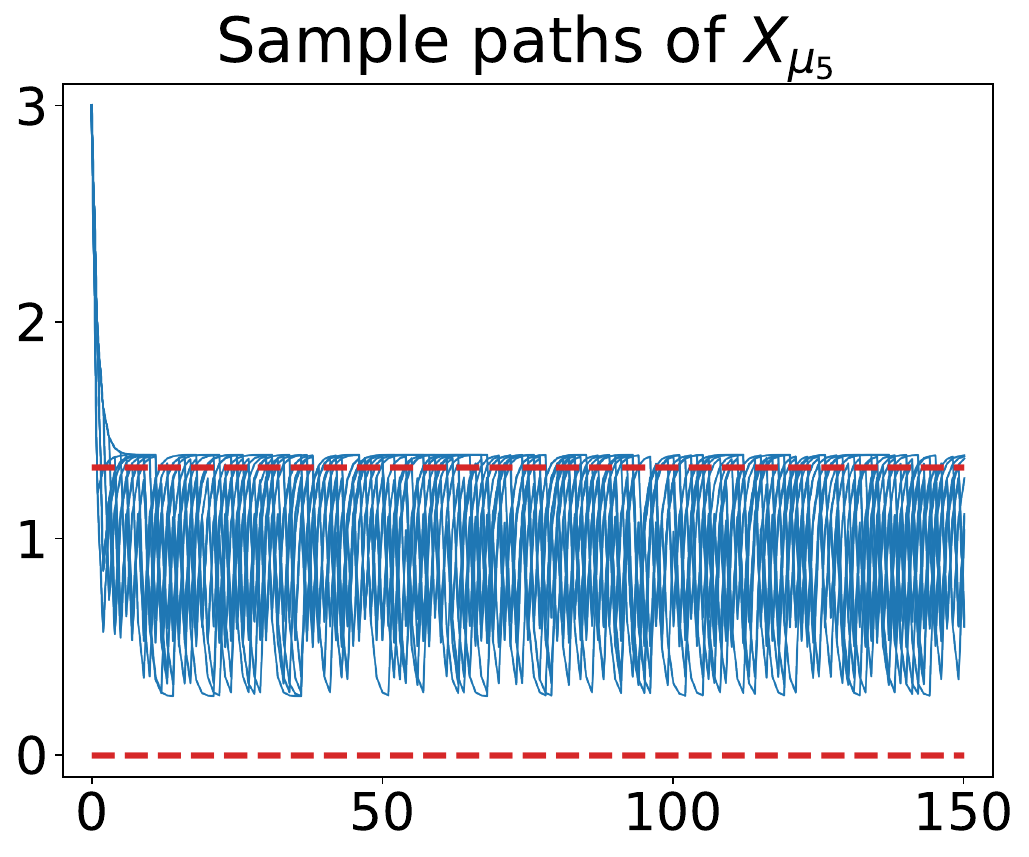}
    \end{minipage}
    \caption{15 sample paths of the closed-loop trajectories $X_{\mu_N}$ for $N=3$ (left), $N=4$ (middle), and $N=5$ (right) together with the upper and lower bound of the realizations of the stationary process (red dashed).}
    \label{fig:stability_paths}
\end{figure}

In Figure~\ref{fig:stability_distribution}, we can see the empirical probability density function and the resulting cumulative distribution function of the closed-loop solution $X_{\mu_N}(150)$ for different horizons $N$. As the horizon $N$ increases, the closed-loop solution converges to the stationary process, now in distribution. The same results hold for the mean and variance of $X_{\mu_N}$, as shown in Figure~\ref{fig:stability_moments}.

\begin{figure}[ht]
    \centering
    \begin{minipage}{0.49\textwidth}
        \centering
        \includegraphics[width=0.7\textwidth]{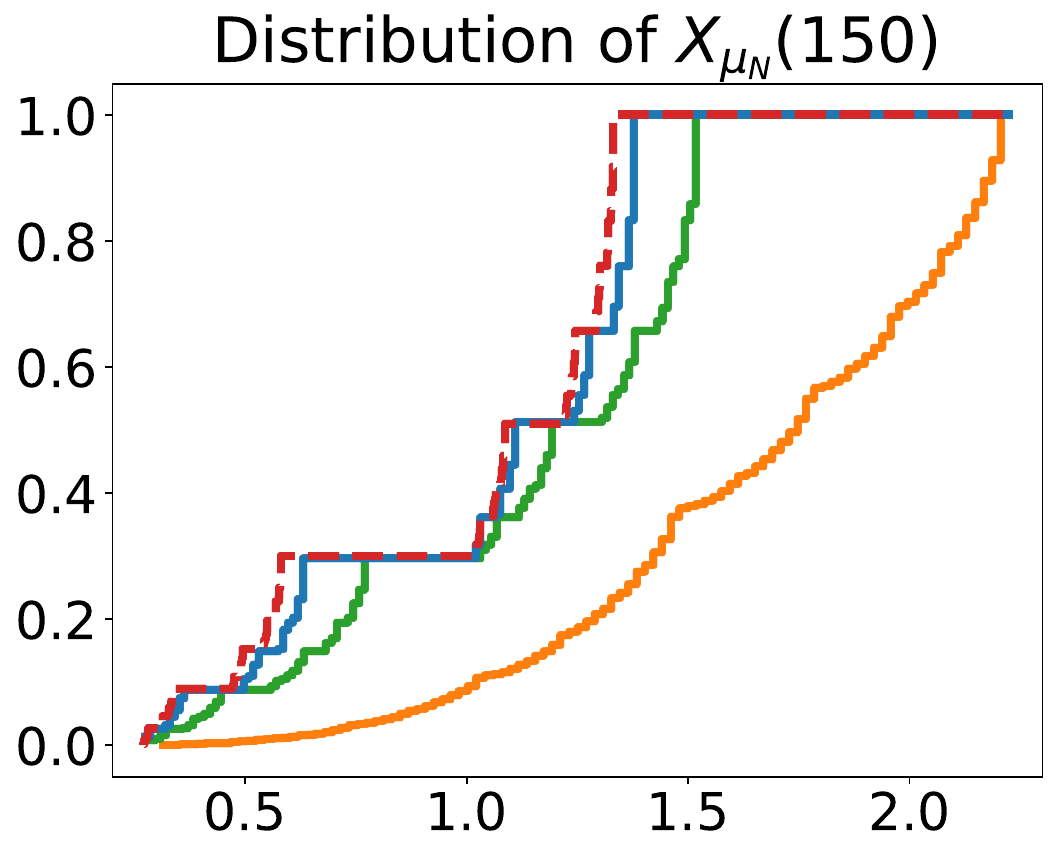}
    \end{minipage}
    \caption{Approximation of the cumulative distribution function of $X_{\mu_N}(150)$ for $N=3$ (orange), $N=4$ (green) and $N=5$ (blue) as well as the approximate stationary distribution (red).}
    \label{fig:stability_distribution}
\end{figure}

\begin{figure}[ht]
    \centering
    \begin{minipage}{0.49\textwidth}
        \centering
        \includegraphics[width=0.7\textwidth]{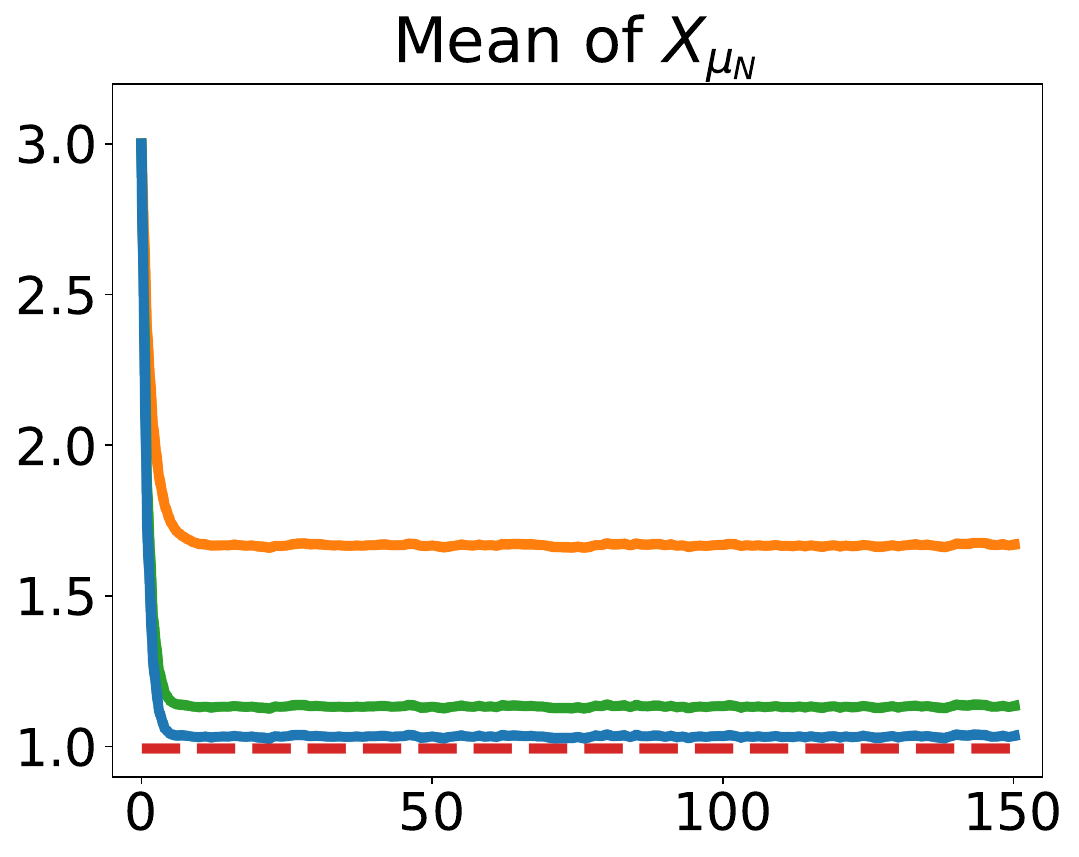}
    \end{minipage}
    \begin{minipage}{0.49\textwidth}
        \centering
        \includegraphics[width=0.7\textwidth]{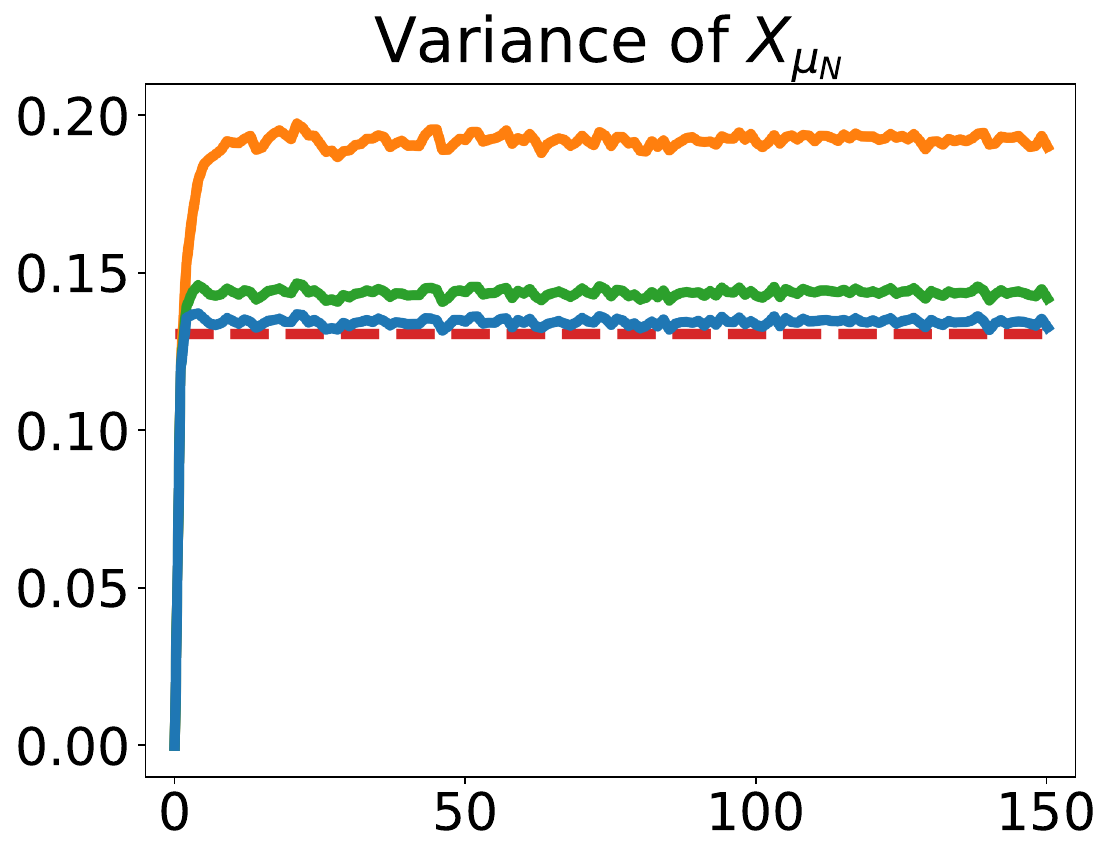}
    \end{minipage}
    \caption{Approximation of the mean (left) and variance (right) of $X_{\mu_N}(150)$ for $N=3$ (orange), $N=4$ (green) and $N=5$ (blue) as well as the approximate stationary values (red).}
    \label{fig:stability_moments}
\end{figure}

In addition to the stability results our findings from Section~\ref{sec:Performance} guarantee averaged and non-averaged performance bounds for the MPC closed loop, which are illustrated in Figure~\ref{fig:costs}.
We can observe that for sufficiently large $K$, the cumulative costs increase approximately linearly at different rates for different horizons $N$, where the difference in the rates is caused by the different $K\nu(N)$ terms from Corollary~\ref{cor:overtaking}. The dependence on $N$ is also visible for the averaged performance, where we can see that the costs converge to a neighborhood of $\ell(\mathbf{X}^s,\mathbf{U}^s)$ and get closer to this value for increasing horizon length, illustrating the results from Theorem~\ref{thm:AvgPerformance}.

\begin{figure}[ht]
    \centering
    \begin{minipage}{0.49\textwidth}
        \centering
        \includegraphics[width=0.7\textwidth]{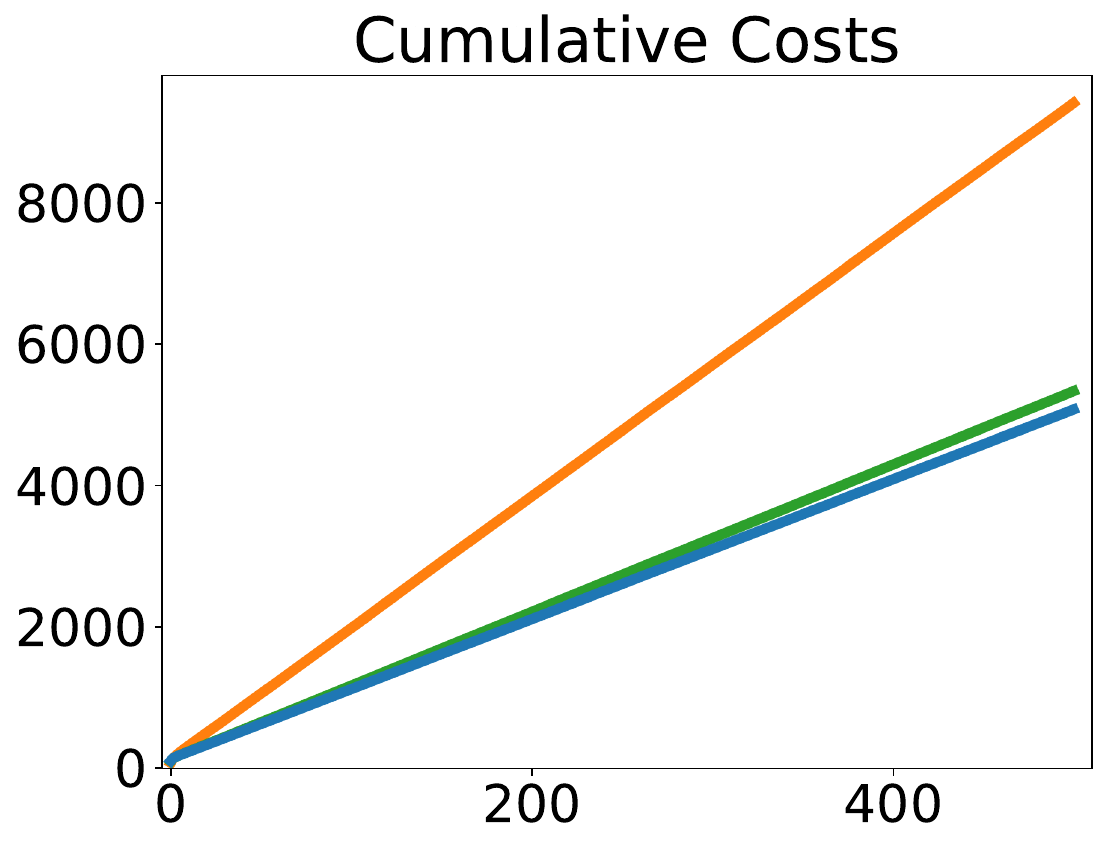}
    \end{minipage}
    \begin{minipage}{0.49\textwidth}
        \centering
        \includegraphics[width=0.7\textwidth]{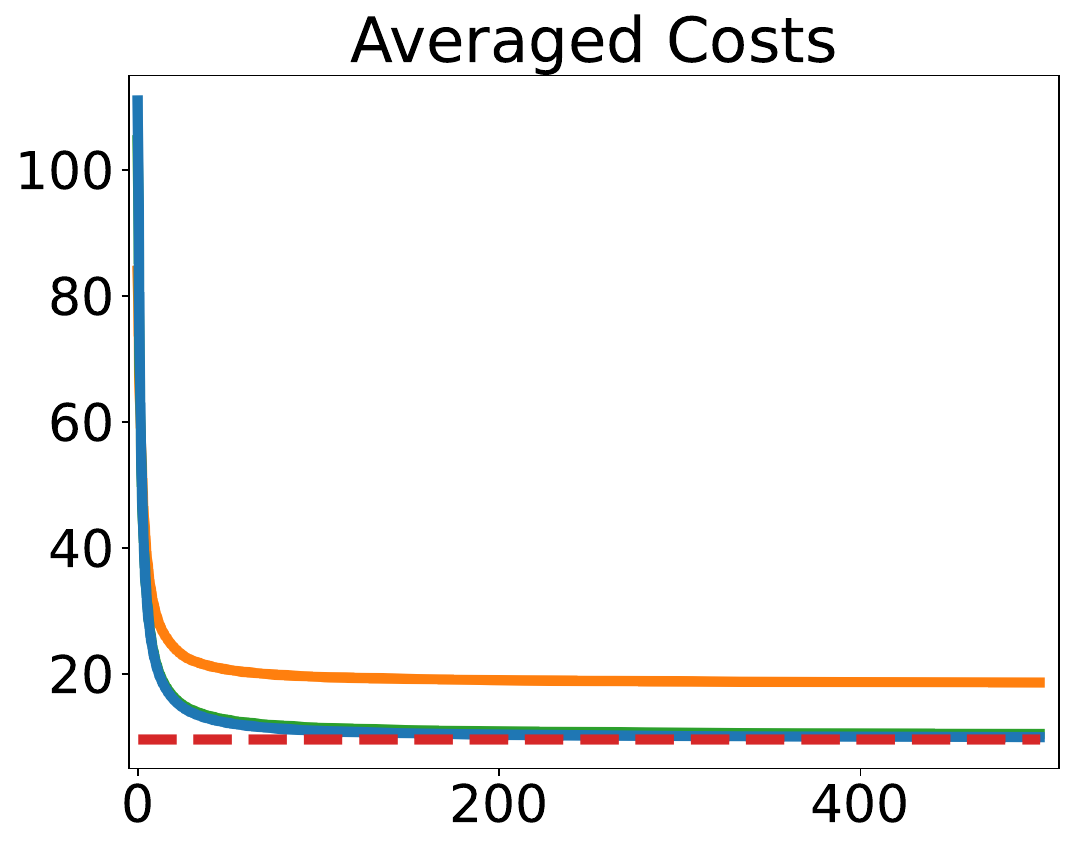}
    \end{minipage}
    \caption{Cumulative costs (left) and stage cost of the stationary process (red) and approximation of the averaged costs (right) for $X_{\mu_N}$ with $N=3$ (orange), $N=4$ (green) and $N=5$ (blue).}
    \label{fig:costs}
\end{figure}

\section{Conclusion} \label{sec:conclusion}
We presented closed-loop results for a stochastic MPC scheme that relies only on state information along a sample path at each prediction step. As such, it is practically implementable for a real-world plant.
Our investigations are based on turnpike and dissipativity concepts for stochastic optimal control problems and the obtained stability properties and performance estimates were illustrated by a nonlinear example.

Future research should focus on the inclusion of probabilistic state and control constraints, such as chance constraints, alternative cost formulations instead of the expectation (e.g., risk measures), and a stability and performance analysis in the case where the open-loop problems cannot be solved exactly due to computational intractability.

\appendix
\section*{Appendix}
\section{Proofs of the preparatory lemmas from Section~\ref{sec:Stability} and Section~\ref{sec:Performance}}  \label{app}
\begin{proof}[Proof of Lemma~\ref{lem:lem1}]
    Let $\tilde{\mathbf{U}}_N^*$ and $\tilde{\mathbf{U}}_{N+1}^*$ be the optimal control sequences of the modified problem~\eqref{eq:stochOCPmodified} on horizon $N$ and $N+1$.
    Then, by optimality and applying the stochastic dynamic programming principle we obtain
    \begin{equation} \label{eq:lem1Equation0}
    \begin{split}
        \tilde{V}_N(\tcl,X_\tcl) & = \tilde{J}_N(\tcl,X_\tcl,\tilde{\mathbf{U}}_N^*)  \\
        &= \tilde{J}_M(\tcl,X_\tcl,\tilde{\mathbf{U}}_N^*) + \tilde{V}_{N-M}
        (\tcl+M,X_{\tilde{\mathbf{U}}_N^*}(M;\tcl,X_\tcl)) \\
        & \leq \tilde{J}_M(\tcl,X_\tcl,\tilde{\mathbf{U}}_{N+1}^*) + \tilde{V}_{N-M}
        (\tcl+M,X_{\tilde{\mathbf{U}}_{N+1}^*}(M;\tcl,X_\tcl))     
    \end{split}
    \end{equation}
	Defining the remainder terms
    \begin{align}
        \tilde{R}_1(\tcl,X_\tcl,M,N) :=& \tilde{V}_{N-M}(\tcl+M,X_{\tilde{\mathbf{U}}_{N}^*}(M;\tcl,X_\tcl))
        - \tilde{V}_{N-M}(\tcl+M,X^s(\tcl+M)) \label{eq:R1} \\
        \tilde{R}_2(\tcl,X_\tcl,M,N) :=& \tilde{V}_{N-M}(\tcl+M,X_{\tilde{\mathbf{U}}_{N+1}^*}(M;\tcl,X_\tcl)) 
        - \tilde{V}_{N-M}(\tcl+M,X^s(\tcl+M)) \nonumber
    \end{align}
    we derive from equation~\eqref{eq:lem1Equation0} that 
    \begin{equation*}
        \tilde{J}_M(\tcl,X_\tcl,\tilde{\mathbf{U}}_N^*) \leq \tilde{J}_M(\tcl,X_\tcl,\tilde{\mathbf{U}}_{N+1}^*) - \tilde{R}_1(\tcl,X_\tcl,M,N) + \tilde{R}_2(\tcl,X_\tcl,M,N)
    \end{equation*}

    Due to Lemma~\ref{lem:TurnpikeModified} we know that there are sets 
    $$\tilde{Q}(\tcl,X_\tcl, L, N) \subseteq \{0,\ldots,N\}$$ 
    and 
    $$\tilde{Q}(\tcl,X_\tcl, L, N+1) \subseteq \{0,\ldots,N+1\}$$ 
    such that 
    \begin{equation*}
        d(X_{\tilde{\mathbf{U}}^*_N}(M;\tcl,X_\tcl),X^s(\tcl+M))  \leq \vartheta(L), ~d(X_{\tilde{\mathbf{U}}^*_{N+1}}(M;\tcl,X_\tcl),X^s(\tcl+M))  \leq \vartheta(L)
    \end{equation*}
    holds for all $M \in \{0,...,N\} \setminus \bar{\mathcal{Q}}(\tcl,X_\tcl,L,N)$ with
    $$\bar{\mathcal{Q}}(\tcl,X_\tcl,L,N) := \tilde{Q}(\tcl,X_\tcl,L,N)  \cup \tilde{Q}(\tcl,X_\tcl,L,N+1)$$
    and $\#\bar{\mathcal{Q}}(\tcl,X_\tcl,L,N) \leq 2L$ since each of the sets $\tilde{Q}$ contains at most $L$ elements.
    
    Now consider $L \geq \vartheta^{-1}(\tilde{r})$.
    Then, we get $\vartheta(L) \leq \tilde{r}$ and thus
    \begin{align*}
        \vert \tilde{R}_1(\tcl,X_\tcl,M,N) \vert & 
        \leq \gamma_{\tilde{V}}(d(X_{\tilde{U}^*_N}(M;\tcl,X_\tcl),X^s(\tcl+M))) \leq \gamma_{\tilde{V}}(\vartheta(L) ) \\
        \vert \tilde{R}_2(\tcl,X_\tcl,M,N) \vert & 
        \leq \gamma_{\tilde{V}}(d(X_{\tilde{U}^*_{N+1}}(M;\tcl,X_\tcl),X^s(\tcl+M))) \leq \gamma_{\tilde{V}}(\vartheta(L) )
    \end{align*}
    for all $M \in \{0,...,N\} \setminus \bar{\mathcal{Q}}(\tcl,X_\tcl,L,N)$.
    
    We continue with repeating the above calculations for $\tilde{J}_M(\tcl,X_\tcl,\tilde{U}_{N+1}^*)$ which yields 
    \begin{equation*}
        \tilde{J}_M(\tcl,X_\tcl,\tilde{\mathbf{U}}_{N+1}^*) \leq \tilde{J}_M(\tcl,X_\tcl,\tilde{\mathbf{U}}_{N}^*) - \tilde{R}_3(\tcl,X_\tcl,M,N) + \tilde{R}_4(\tcl,X_\tcl,M,N)
    \end{equation*}
    with
    \begin{align}
        \tilde{R}_3(\tcl,X_\tcl,M,N) :=& \tilde{V}_{N+1-M}(\tcl+M,X_{\tilde{\mathbf{U}}_{N+1}^*}(M;\tcl,X_\tcl))
        - \tilde{V}_{N+1-M}(\tcl+M,X^s(\tcl+M)) \label{eq:R3} \\
        \tilde{R}_4(\tcl,X_\tcl,M,N) :=& \tilde{V}_{N+1-M}(\tcl+M,X_{\tilde{\mathbf{U}}_{N}^*}(M;\tcl,X_\tcl)) \nonumber 
        - \tilde{V}_{N+1-M}(\tcl+M,X^s(\tcl+M)). \nonumber
    \end{align}
    
    These residuals can be estimated in the same way as $\tilde{R}_1$ and $\tilde{R}_2$, i.e., the inequalities $\vert \tilde{R}_3(\tcl,X_\tcl,M,N) \vert \leq \gamma_{\tilde{V}}(\vartheta(L))$ and $\vert \tilde{R}_4(\tcl,X_\tcl,M,N) \vert \leq \gamma_{\tilde{V}}(\vartheta(L))$ hold for $M \in \bar{\mathcal{Q}}(\tcl,X_\tcl,L,N)$.
    Thus, by defining 
    $$R_1(\tcl,X_\tcl,M,N) := \tilde{J}_M(\tcl,X_\tcl,\tilde{\mathbf{U}}_{N}^*)- \tilde{J}_M(\tcl,X_\tcl,\tilde{\mathbf{U}}_{N+1}^*)$$
    we obtain
    \begin{align*}
        \vert R_1&(\tcl,X_\tcl,M,N) \vert = \vert \tilde{J}_M(\tcl,X_\tcl,\tilde{\mathbf{U}}_{N}^*)- \tilde{J}_M(\tcl,X_\tcl,\tilde{\mathbf{U}}_{N+1}^*)\vert \\
        & \leq \max\{\vert -\tilde{R}_1(\tcl,X_\tcl,M,N) + \tilde{R}_2(\tcl,X_\tcl,M,N) \vert, 
        \vert -\tilde{R}_3(\tcl,X_\tcl,M,N) + \tilde{R}_4(\tcl,X_\tcl,M,N) \vert\} \\
        & \leq \max\{\vert \tilde{R}_1(\tcl,X_\tcl,M,N) \vert + \vert \tilde{R}_2(\tcl,X_\tcl,M,N) \vert, 
        \vert \tilde{R}_3(\tcl,X_\tcl,M,N) \vert + \vert \tilde{R}_4(\tcl,X_\tcl,M,N) \vert \} \\
        & \leq \max\{2\gamma_{\tilde{V}}(\vartheta(L)),2\gamma_{\tilde{V}}(\vartheta(L))\} = 2 \gamma_{\tilde{V}}(\vartheta(L))
    \end{align*}
    for all $M \in \{0,...,N\} \setminus \bar{\mathcal{Q}}(\tcl,X_\tcl,L,N)$, which proves the claim.
\end{proof}

\begin{proof}[Proof of Lemma~\ref{lem:lem2}]
    Using the remainder term $\tilde{R}_1(\tcl,X_\tcl,M,N)$ from equation~\eqref{eq:R1} we obtain
    \begin{equation} \label{eq:lem2Equation0}
    \begin{split}
        \tilde{V}_N(\tcl,X_\tcl) =&  \tilde{J}_M(\tcl,X_\tcl,\tilde{\mathbf{U}}_N^*) + \tilde{V}_{N-M}(\tcl+M,X^s(\tcl+M)) 
        + \tilde{R}_1(\tcl,X_\tcl,M,N) \\
        =& \tilde{J}_M(\tcl,X_\tcl,\tilde{\mathbf{U}}_N^*) + \tilde{R}_1(\tcl,X_\tcl,M,N).
    \end{split}
    \end{equation}
    Here, the last equality follows since Assumption~\ref{ass:modifiedStagecost} implies that $\tilde{V}_K(\tcl,X^s(\tcl)) = 0$ for all $\tcl \in \mathbb{N}_0$ and $K \in \N$.
    Moreover, with an analogous computation we get
    \begin{equation}\label{eq:lem2Equation1}
        \tilde{V}_{N+1}(\tcl,X_\tcl) = \tilde{J}_M(\tcl,X_\tcl,\tilde{\mathbf{U}}_{N+1}^*) + \tilde{R}_3(\tcl,X_\tcl,M,N)
    \end{equation}
    with $\tilde{R}_3(\tcl,X_\tcl,M,N)$ from equation~\eqref{eq:R3}.
    By using Lemma~\ref{lem:lem1} and the identities ~\eqref{eq:lem2Equation0} and \eqref{eq:lem2Equation1}, we thus obtain
    \begin{align*}
         \tilde{V}_{N+1}(\tcl,X_\tcl) &= \tilde{J}_M(\tcl,X_\tcl,\tilde{\mathbf{U}}_{N+1}^*) + \tilde{R}_3(\tcl,X_\tcl,M,N) \\
         &= \tilde{J}_M(\tcl,X_\tcl,\tilde{\mathbf{U}}_{N}^*) + \tilde{R}_3(\tcl,X_\tcl,M,N) - R_1(\tcl,X_\tcl,M,N) \\
         &= \tilde{V}_{N}(\tcl,X_\tcl) - \tilde{R}_1(\tcl,X_\tcl,M,N) + \tilde{R}_3(\tcl,X_\tcl,M,N) - R_1(\tcl,X_\tcl,M,N) 
    \end{align*}
    with $\vert \tilde{R}_1(\tcl,X_\tcl,M,N) \vert \leq \gamma_{\tilde{V}}(\vartheta(L))$, $\vert \tilde{R}_3(\tcl,X_\tcl,M,N) \vert \leq \gamma_{\tilde{V}}(\vartheta(L))$, and $\vert R_1(\tcl,X_\tcl,M,N) \vert \leq 2\gamma_{\tilde{V}}(\vartheta(L))$ for all $M \in \{0,\ldots,N\} \setminus \bar{\mathcal{Q}}(\tcl,X_\tcl,L,N)$, which proves the claim with 
    \begin{equation*}
        R_6(\tcl,X_\tcl,M,N) :=  - \tilde{R}_1(\tcl,X_\tcl,M,N) + \tilde{R}_3(\tcl,X_\tcl,M,N) - R_1(\tcl,X_\tcl,M,N).
    \end{equation*}
\end{proof}

\begin{proof}[Proof of Lemma~\ref{lem:theorem1}]
    The proof of inequality \eqref{eq:thm1} is performed by contradiction.
    To this end, consider an arbitrary $M \leq N$ and assume that there is a control sequence $\mathbf{U} \in \UU_{ad}^M(\tcl,X_\tcl)$ such that 
    \begin{equation} \label{eq:thm1Equation0}
        J_M(\tcl,X_\tcl,\mathbf{U}) + \bar{R}_1(\tcl,X_\tcl,M,N) + \bar{R}_2(\tcl,X_\tcl,M,N) < J_M(\tcl,X_\tcl,\mathbf{U}_{N}^*).
    \end{equation}
    with 
    \begin{align*}
        \bar{R}_1(\tcl,X_\tcl,M,N) & := V_{N-M}(\tcl+M,X_{\mathbf{U}}(M;\tcl,X_\tcl)) 
        - V_{N-M}(\tcl+M,X^s(\tcl+M)) \\
        \bar{R}_2(\tcl,X_\tcl,M,N) & := V_{N-M}(\tcl+M,X^s(\tcl+M)) - V_{N-M}(\tcl+M,X_{\mathbf{U}_N^*}(M;\tcl,X_\tcl)).
    \end{align*}  
    Then, it follows that 
    \begin{align*}
        &\hat{J}_M(\tcl,X_\tcl,\mathbf{U}) + V_{N-M}(\tcl+M,X_{\mathbf{U}}(M;\tcl,X_\tcl)) \\
        & = J_M(\tcl,X_\tcl,\mathbf{U}) + V_{N-M}(\tcl+M,X^s(\tcl+M)) + R_1(\tcl,X_\tcl,M,N) \\
        & =  J_M(\tcl,X_\tcl,\mathbf{U}) + V_{N-M}(\tcl+M,X_{\mathbf{U}_N^*}(M;\tcl,X_\tcl)) 
        + \bar{R}_1(\tcl,X_\tcl,M,N) + \bar{R}_2(\tcl,X_\tcl,M,N) \\
        & <  J_M(\tcl,X_\tcl,\mathbf{U}^*) + V_{N-M}(\tcl+M,X_{\mathbf{U}_N^*}(M;\tcl,X_\tcl)) = V_N(\tcl,X_\tcl)
    \end{align*}
    which contradicts the optimality of $\mathbf{U}_N^*$.
    Thus, we can conclude that 
    \begin{equation} \label{eq:oppositeEquationOfAssumption}
        J_M(\tcl,X_\tcl,\mathbf{U}_{N}^*) \leq J_M(\tcl,X_\tcl,\mathbf{U}) + R_3(\tcl,X_\tcl,M,N)
    \end{equation}
    holds with
    \begin{equation*}
    		R_3(\tcl,X_\tcl,M,N) := \bar{R}_1(\tcl,X_\tcl,M,N) + \bar{R}_2(\tcl,X_\tcl,M,N).
	\end{equation*}    
    It remains to show that the bound on $\vert R_3(\tcl,X_\tcl,M,N) \vert$ holds. 
    By Lemma~\ref{thm:Turnpike} we know that there is a set $\mathcal{Q}(\tcl,X_\tcl,L,N)$ such that $d(\bar{X}_{\mathbf{U}^*}, X^s(\tcl+M)) \leq \vartheta(L)$ for all $M \in \{0,...,N\} \setminus Q(\tcl,X_\tcl,L,N)$.
    Now consider $L \geq \vartheta^{-1}(\eps)$, and $\mathbf{U} \in \UU_{ad}^M(\tcl,X_\tcl)$ with $d(X_{\mathbf{U}}(M;\tcl,X_\tcl),X^s(\tcl+M)) \leq \vartheta(L)$. 
    Then, according to the continuity assumption of the optimal value function, see Assumption~\ref{ass:ContinuityV}, it follows that
    \begin{align*}
        \vert \bar{R}_1(\tcl,X_\tcl,M,N) \vert \leq \gamma_V(N-M,\vartheta(L)) \\
        \vert \bar{R}_2(\tcl,X_\tcl,M,N) \vert \leq \gamma_V(N-M,\vartheta(L))
    \end{align*}
    which implies $\vert R_3(\tcl,X_\tcl,M,N) \vert \leq 2 \gamma_{V}(N-M,\vartheta(L))$ and thus concludes the proof.
\end{proof}

\begin{proof}[Proof of Lemma~\ref{lem:lemma3}]
    Due to the definition of the composite control sequence $\hat{\mathbf{U}}$ it holds that 
    \begin{equation}
        \begin{split} \label{eq:aboveLemma3}
            \tilde{J}_N(\tcl,X_\tcl,\hat{\mathbf{U}}) & = \tilde{J}_M(\tcl,X_\tcl,\mathbf{U}_{N}^*) + \tilde{J}_{N-M}(\tcl+M,\bar{X}_{M},\bar{\mathbf{U}}) \\
            & = \tilde{J}_M(\tcl,X_\tcl,\mathbf{U}_{N}^*) + \tilde{V}_{N-M}(\tcl+M,\bar{X}_{M}) \\
            & = J_M(\tcl,X_\tcl,\mathbf{U}_{N}^*) - M\ell(\mathbf{X}^s,\mathbf{U}^s)
            + \lambda(\tcl,X_\tcl) - \lambda(\tcl+M,\bar{X}_{M}) \\
            &+ \tilde{V}_{N-M}(\tcl+M,\bar{X}_{M}). 
        \end{split}
    \end{equation}
    Now consider $\tilde{R}_1(\tcl,X_\tcl,M,N)$ from equation~\eqref{eq:R1} and
    \begin{align*}
        \hat{R}_1(\tcl,X_\tcl,M,N) & := \tilde{V}_{N-M}(\tcl+M,\bar{X}_{M}) - \tilde{V}_{N-M}(\tcl+M,X^s(\tcl+M)) \\
        \hat{R}_2(\tcl,X_\tcl,M,N) & := \lambda(\tcl+M,X^s(\tcl+M)) -\lambda(\tcl+M,\bar{X}_{M}) \\
        \hat{R}_3(\tcl,X_\tcl,M,N) & := \lambda(\tcl+M,\tilde{X}_M) -\lambda(\tcl+M,X^s(\tcl+M)).
    \end{align*}
    with $\tilde{X}_{M} := X_{\tilde{\mathbf{U}}_N^*}(M;\tcl,X_\tcl)$.
    Then, by Theorem~\ref{lem:theorem1} it holds that
    \begin{align*}
        & J_M(\tcl,X_\tcl,\mathbf{U}_{N}^*) + \lambda(\tcl,X_\tcl) - \lambda(\tcl+M,\bar{X}_{M}) + \tilde{V}_{N-M}(\tcl+M,\bar{X}_{M}) \nonumber\\
        =&  J_M(\tcl,X_\tcl,\mathbf{U}_{N}^*) + \lambda(\tcl,X_\tcl) - \lambda(\tcl+M,X^s(\tcl+M))  \nonumber\\ 
        & + \tilde{V}_{N-M}(\tcl+M,X^s(\tcl+M)) + \hat{R}_1(\tcl,X_\tcl,M,N) + \hat{R}_2(\tcl,X_\tcl,M,N) \\
        = &J_M(\tcl,X_\tcl,\mathbf{U}_{N}^*) + \lambda(\tcl,X_\tcl) - \lambda(\tcl+M,\tilde{X}_M) + \tilde{V}_{N-M}(\tcl+M,\tilde{X}_M) \nonumber\\
        & + \hat{R}_1(\tcl,X_\tcl,M,N) + \hat{R}_2(\tcl,X_\tcl,M,N) + \hat{R}_3(\tcl,X_\tcl,M,N) - \tilde{R}_1(\tcl,X_\tcl,M,N) \\
        \leq& J_M(\tcl,X_\tcl,\tilde{\mathbf{U}}_{N}^*) + \lambda(\tcl,X_\tcl) - \lambda(\tcl+M,\tilde{X}_M) + \tilde{V}_{N-M}(\tcl+M,\tilde{X}_M) \nonumber \\
        & + \hat{R}_1(\tcl,X_\tcl,M,N) + \hat{R}_2(\tcl,X_\tcl,M,N) + \hat{R}_3(\tcl,X_\tcl,M,N) \\
        & - \tilde{R}_1(\tcl,X_\tcl,M,N) + R_3(\tcl,X_\tcl,M,N).
    \end{align*}
    Hence, using the dynamic programming principle we obtain
    \begin{equation*}
    \begin{split}
        \tilde{J}_N(\tcl,X_\tcl,\hat{\mathbf{U}}) \leq& \tilde{J}_M(\tcl,X_\tcl,\tilde{\mathbf{U}}_{N}^*) + \tilde{V}_{N-M}(\tcl+M,\tilde{X}_M) + \hat{R}_4(\tcl,X_\tcl,M,N) \\
        =&  \tilde{V}_N(\tcl,X_\tcl) + \hat{R}_4(\tcl,X_\tcl,M,N) 
    \end{split}
    \end{equation*}
    with
    \begin{equation*}
    \begin{split}
        \hat{R}_4(\tcl,X_\tcl,M,N) :=& \hat{R}_1(\tcl,X_\tcl,M,N) + \hat{R}_2(\tcl,X_\tcl,M,N) + \hat{R}_3(\tcl,X_\tcl,M,N) \\
        &- \tilde{R}_1(\tcl,X_\tcl,M,N) + R_3(\tcl,X_\tcl,M,N)
    \end{split}
    \end{equation*}
    since
    \begin{equation*}
        J_M(\tcl,X_\tcl,\tilde{\mathbf{U}}_{N}^*) - M\ell(\mathbf{X}^s,\mathbf{U}^s) + \lambda(\tcl,X_\tcl) - \lambda(\tcl+M,\tilde{X}_M) = \tilde{J}_M(\tcl,X_\tcl,\tilde{\mathbf{U}}_{N}^*)
    \end{equation*}
    holds.
    Moreover, by the definition of the optimal value function we know that $\tilde{V}_N(\tcl,X_\tcl) \leq \tilde{J}_N(\tcl,X_\tcl,\hat{\mathbf{U}})$ holds, which implies that $\hat{R}_4(\tcl,X_\tcl,M,N) \geq 0$ holds.
    Hence, we can conclude that there exists a residual $R_4(\tcl,X_\tcl,M,N)$ with $0 \leq R_4(\tcl,X_\tcl,M,N) \leq \hat{R}_4(\tcl,X_\tcl,M,N)$ such that
    \begin{equation*}
        \tilde{J}_N(\tcl,X_\tcl,\hat{\mathbf{U}}) = \tilde{V}_N(\tcl,X_\tcl) + R_4(\tcl,X_\tcl,M,N).
    \end{equation*}
    Thus, it remains to show that the upper bound on $\vert R_4(\tcl,X_\tcl,M,N) \vert$ holds.
    For this purpose, consider $L \in \N$ large enough such that $\vartheta(L) \leq \min \{\eps, \tilde{\eps}\}$ holds with $\eps$ form Assumption~\ref{ass:ContinuityV} and $\tilde{\eps}$ from Assumption~\ref{ass:ContinuityModified}.
    Then, using the turnpike properties from Lemma~\ref{thm:Turnpike} and Lemma~\ref{lem:TurnpikeModified}, and the continuity Assumptions~\ref{ass:ContinuityV} and \ref{ass:ContinuityModified} we can conclude that there are sets $\mathcal{Q}(\tcl,X_\tcl,L,N)$ and $\tilde{\mathcal{Q}}(\tcl,X_\tcl,L,N)$ such that
    \begin{align*}
        \vert \hat{R}_1(\tcl,X_\tcl,M,N) \vert \leq \gamma_{\tilde{V}}(\vartheta(L)), 
        \quad \vert \hat{R}_2(\tcl,X_\tcl,M,N) \vert \leq \gamma_{\lambda}(\vartheta(L))
    \end{align*}
    for all $M \in \{0,\ldots,N\} \setminus \mathcal{Q}(\tcl,X_\tcl,L,N)$ and
    \begin{align*}
        \vert \hat{R}_3(\tcl,X_\tcl,M,N) \vert \leq \gamma_{\tilde{V}}(\vartheta(L)), 
        \quad \vert \tilde{R}_1(\tcl,X_\tcl,M,N) \vert \leq \gamma_{\lambda}(\vartheta(L))
    \end{align*}
    for all $M \in \{0,\ldots,N\} \setminus \tilde{\mathcal{Q}}(\tcl,X_\tcl,L,N)$.
    Thus, setting 
    $$\hat{\mathcal{Q}}(\tcl,X_\tcl,L,N) = \mathcal{Q}(\tcl,X_\tcl,L,N) \cup \tilde{\mathcal{Q}}(\tcl,X_\tcl,L,N)$$ 
    and using this estimates and Theorem~\ref{lem:theorem1} we can conclude that
    \begin{equation*}
    \begin{split}
        \vert R_4(\tcl,X_\tcl,M,N) \vert \leq& \vert \hat{R}_4(\tcl,X_\tcl,M,N) \vert \\
        \leq& \vert \hat{R}_1(\tcl,X_\tcl,M,N) \vert + \vert \hat{R}_2(\tcl,X_\tcl,M,N) \vert + \vert \hat{R}_3(\tcl,X_\tcl,M,N) \vert \\
        &+ \vert \tilde{R}_1(\tcl,X_\tcl,M,N) \vert + \vert R_3(\tcl,X_\tcl,M,N) \vert \\
        \leq& 2\gamma_{\tilde{V}}(\vartheta(L)) + 2\gamma_{\lambda}(\vartheta(L))  + 2\gamma_V(N-M,\vartheta(L))
    \end{split}
    \end{equation*}
    holds for all $M \in \hat{\mathcal{Q}}(\tcl,X_\tcl,L,N)$, which shows the claim, since each of the sets $\mathcal{Q}$ and $\tilde{\mathcal{Q}}$ contains at most $L$ elements.
\end{proof}

\begin{proof}[Proof of Lemma~\ref{lem:optimalOperation}]
    Because of the dissipativity Assumption~\ref{ass:Dissipativity} we know that there exists a uniform lower bound $-C^l_{\lambda} < 0$ on $\lambda$ such that
        \begin{equation*}
        \begin{split}
            &\frac{1}{K} \sum_{\tcl=0}^{K-1} \ell(X_{\mathbf{U}}(\tcl;0,X_0), U(\tcl) \\
            \geq& \frac{1}{K} \sum_{\tcl=0}^{K-1} \ell(\mathbf{X}^s,\mathbf{U}^s) - \lambda(\tcl,X_{\mathbf{U}}(\tcl;0,X_0)) + \lambda(\tcl+1,X_{\mathbf{U}}(\tcl+1;0,X_0)) \\
            \geq& \ell(\mathbf{X}^s,\mathbf{U}^s) - \frac{\lambda(0,X_0)}{K} - \frac{C^l_{\lambda}}{K}
        \end{split}
        \end{equation*}
        holds for all $X_0 \in \RR{\Omega,\X}$ and $\mathbf{U} \in \UU_{ad}^K(X_0)$, which proves the claim by letting $K$ go to infinity.
\end{proof}

{\bibliographystyle{abbrv} 
  \bibliography{references} 
}

\end{document}